\DeclareMathOperator{\Vol}{Vol}
\DeclareMathOperator{\Imaginary}{Im}
\DeclareMathOperator{\Real}{Re}
\DeclareMathOperator{\contr}{\lrcorner}
\newcommand{\dbar}{\overline{\partial}}
\newcommand{\lp}{\langle}
\newcommand{\rp}{\rangle}
\newcommand{\lv}{\lvert}
\newcommand{\rv}{\rvert}
\newcommand{\on}[2]{\mathop{\null#2}\limits^{#1}}
\newcommand{\tracefree}[1]{\on{\circ}{#1}}
\newcommand{\mE}{\mathcal{E}}
\newcommand{\mP}{\mathcal{P}}
\newcommand{\bC}{\mathbb{C}}
\newcommand{\bD}{\mathbb{D}}
\newcommand{\bR}{\mathbb{R}}
\newcommand{\bZ}{\mathbb{Z}}
\newcommand{\comment}[1]{}
\newtheorem{thm}{Theorem}[section]
\newtheorem{prop}[thm]{Proposition}
\newtheorem{lem}[thm]{Lemma}
\theoremstyle{definition}
\newtheorem{defn}[thm]{Definition}
\theoremstyle{remark}
\newtheorem{remark}[thm]{Remark}
\numberwithin{equation}{section}
\begin{document}

\title{A Paneitz-type operator for CR pluriharmonic functions}
\author{Jeffrey S.\ Case}
\thanks{JSC was partially supported by NSF Grant No.\ DMS-1004394}
\address{Department of Mathematics \\ Princeton University \\ Princeton, NJ 08540}
\email{jscase@math.princeton.edu}
\author{Paul Yang}
\thanks{PY was partially supported by NSF Grant No.\ DMS-1104536}
\address{Department of Mathematics \\ Princeton University \\ Princeton, NJ 08540}
\email{yang@math.princeton.edu}
\keywords{pluriharmonic functions, pseudo-Einstein, Paneitz operator, $Q$-curvature, $P$-prime operator, $Q$-prime curvature}
\subjclass[2000]{Primary 32V05; Secondary 53C24}
\begin{abstract}
We introduce a fourth order CR invariant operator on pluriharmonic functions on a three-dimensional CR manifold, generalizing to the abstract setting the operator discovered by Branson, Fontana and Morpurgo.  For a distinguished class of contact forms, all of which have vanishing Hirachi-$Q$ curvature, these operators determine a new scalar invariant with properties analogous to the usual $Q$-curvature.  We discuss how these are similar to the (conformal) Paneitz operator and $Q$-curvature of a four-manifold, and describe its relation to some problems for three-dimensional CR manifolds.  
\end{abstract}
\maketitle


\section{Introduction}
\label{sec:intro}

It is well-known that there is a deep analogy between the study of three-dimensional CR manifolds and of four-dimensional conformal manifolds.  Two important ingredients in the study of the latter are the Paneitz operator $P_4$ and the $Q$-curvature $Q_4$.  Given a metric $g$, the Paneitz operator is a formally self-adjoint fourth-order differential operator of the form $\Delta^2$ plus lower-order terms, while the $Q$-curvature is a scalar invariant of the form $\Delta R$ plus lower-order terms, where $R$ is the scalar curvature of $g$ and ``order'' is measured according to the number of derivatives taken of $g$.  The pair $(P_4,Q_4)$ generalizes to four-dimensions many important properties of the pair $(-\Delta,K)$ of the Laplacian and the Gauss curvature of a two-manifold.  For example, if $(M^4,g)$ is a Riemannian manifold and $\hat g=e^{2\sigma}g$ is another choice of metric, then
\begin{align}
\label{eqn:conformal_p_trans} e^{4\sigma}\hat P_4(f) & = P_4(f) \\
\label{eqn:conformal_q_trans} e^{4\sigma}\hat Q_4 & = Q_4 + P_4(\sigma)
\end{align}
for all $f\in C^\infty(M)$.  Since also $P_4(1)=0$, the transformation formula~\eqref{eqn:conformal_q_trans} implies that on a compact conformal manifold $(M^4,[g])$, the integral of the $Q$-curvature is a conformal invariant; indeed, the Gauss--Bonnet--Chern formula states that this integral is a linear combination of the Euler characteristic of $M^4$ and the integral of a pointwise conformal invariant, namely the norm of the Weyl tensor.  The pair $(P_4,Q_4)$ also appears in the linearization of the Moser--Trudinger inequality.  Denoting by $(S^4,g_0)$ the standard four-sphere with $g_0$ a metric of constant sectional curvature one, it was proven by Beckner~\cite{Beckner1993}, and later by Chang and the second author~\cite{ChangYang1995} using a different technique, that
\begin{equation}
\label{eqn:conformal_mt}
\int_{S^4} u\,P_4u + 2\int_{S^4} Q_4u - \frac{1}{2}\left(\int_{S^4} Q_4\right)\log\left(\fint_{S^4} e^{4u}\right) \geq 0
\end{equation}
for all $u\in C^\infty(S^4)$, and that equality holds if and only if $e^{2u}g_0$ is an Einstein metric on $(S^4,g_0)$.

A natural question is whether there exist analogues of $P_4$ and $Q_4$ defined for a three-dimensional pseudohermitian manifold $(M^3,J,\theta)$.  In a certain sense this is already known; the compatibility operator studied by Graham and Lee~\cite{GrahamLee1988} is a fourth-order CR invariant operator with leading order term $\Delta_b^2+T^2$ and Hirachi~\cite{Hirachi1990} has identified a scalar invariant $Q_4$ which is related to $P_4$ through a change of contact form in a manner analogous to~\eqref{eqn:conformal_q_trans}.  However, while the total $Q$-curvature of a compact three-dimensional CR manifold is indeed a CR invariant, it is always equal to zero.  Moreover, the $Q$-curvature of the standard CR three-sphere vanishes identically; indeed, this is true for the boundary of any strictly pseudoconvex domain~\cite{FeffermanHirachi2003}, as is explained in Section~\ref{sec:defn}.  In particular, while~\eqref{eqn:conformal_mt} is true on the CR three-sphere, it is trivial, as it only states that the Paneitz operator is nonnegative.

Using spectral methods, Branson, Fontana and Morpurgo~\cite{BransonFontanaMorpurgo2007} have recently identified a new operator $P_4^\prime$ on the standard CR three-sphere $(S^3,J,\theta_0)$ such that $P_4^\prime$ is of the form $\Delta_b^2$ plus lower-order terms, $P_4^\prime$ is invariant under the action of the CR automorphism group of $S^3$, and $P_4^\prime$ appears in an analogue of~\eqref{eqn:conformal_mt} in which the exponential term is present.  There is, however, a catch: the operator $P_4^\prime$ acts only on the space $\mP$ of CR pluriharmonic functions on $S^3$, namely those functions which are the boundary values of pluriharmonic functions in the ball $\{(z,w)\colon \lv z\rv^2+\lv w\rv^2<1\}\subset\bC^2$.  The space of CR pluriharmonic functions on $S^3$ is itself invariant under the action of the CR automorphism group, so it makes sense to discuss the invariance of $P_4^\prime$.  Using this operator, Branson, Fontana and Morpurgo~\cite{BransonFontanaMorpurgo2007} showed that
\begin{equation}
\label{eqn:cr_mt}
\int_{S^3} u\,P_4^\prime u + 2\int_{S^3} Q_4^\prime u - \left(\int_{S^3} Q_4^\prime\right)\log\left(\fint_{S^3} e^{2u}\right) \geq 0
\end{equation}
for all $u\in\mP$, where $Q_4^\prime=1$ and equality holds in~\eqref{eqn:cr_mt} if and only if $e^u\theta_0$ is a torsion-free contact form with constant Webster scalar curvature.

Formally, the operator $P_4^\prime$ is constructed using Branson's principle of analytic continuation in the dimension~\cite{Branson1995}.  More precisely, there exists in general dimensions a fourth-order CR invariant operator with leading order term $\Delta_b^2+T^2$, which we shall also refer to as the Paneitz operator.  On the CR spheres, this is an intertwining operator, and techniques from representation theory allow one to quickly compute the spectrum of this operator.  By carrying out this program, one observes that the Paneitz operator on the standard CR three-sphere kills CR pluriharmonic functions, and moreover, the Paneitz operator $P_{4,n}$ on the standard CR $(2n+1)$-sphere acts on CR pluriharmonic functions as $\frac{n-1}{2}$ times a well-defined operator, called $P_4^\prime$.  One observation in~\cite{BransonFontanaMorpurgo2007} is that this operator is in fact a fourth-order differential operator acting on CR pluriharmonic functions which is, in a suitable sense, CR invariant.

The purpose of this article is to show that there is a meaningful definition of the ``$P^\prime$-operator'' on general three-dimensional CR manifolds enjoying the same algebraic properties as the operator $P_4^\prime$ defined in~\cite{BransonFontanaMorpurgo2007}, and also to investigate the possibility of defining a scalar invariant $Q_4^\prime$ which is related to $P_4^\prime$ in a manner analogous to the way in which the $Q$-curvature is related to the Paneitz operator.  It turns out that one cannot define $Q_4^\prime$ in a meaningful way for a general choice of contact form on a CR three-manifold, though one can for a distinguished class of contact forms, namely the so-called pseudo-Einstein contact forms.  These are precisely those contact forms which are locally volume-normalized with respect to a closed section of the canonical bundle, which is a meaningful consideration in dimension three (cf.\ \cite{Lee1988} and Section~\ref{sec:lee}).  Having made these definitions, we will also begin to investigate the geometric meaning of these invariants.

To describe our results, let us begin by discussing in more detail the ideas which give rise to the definitions of $P_4^\prime$ and $Q_4^\prime$.  To define $P_4^\prime$, we follow the same strategy of Branson, Fontana, and Morpurgo~\cite{BransonFontanaMorpurgo2007}.  First, Gover and Graham~\cite{GoverGraham2005} have shown that on a general CR manifold $(M^{2n+1},J)$, one can associate to each choice of contact form $\theta$ a formally-self adjoint real fourth-order operator $P_{4,n}$ which has leading order term $\Delta_b^2+T^2$, and that this operator is CR covariant.  On three-dimensional CR manifolds, this reduces to the well-known operator
\[ P_4 := P_{4,1} = \Delta_b^2 + T^2 - 4\Imaginary \nabla^\alpha A_{\alpha\beta}\nabla^\beta \]
which, through the work of Graham and Lee~\cite{GrahamLee1988} and Hirachi~\cite{Hirachi1990}, is known to serve as a good analogue of the Paneitz operator of a four-dimensional conformal manifold.  As pointed out by Graham and Lee~\cite{GrahamLee1988}, the kernel of $P_4$ (as an operator on a three-dimensional CR manifold) contains the space $\mP$ of CR pluriharmonic functions, and thus one can ask whether the operator
\[ P_4^\prime := \lim_{n\to1} \frac{2}{n-1}P_{4,n} \rv_{\mP} \]
is well-defined.  As we verify in Section~\ref{sec:defn}, this is the case.  It then follows from standard arguments (cf.\ \cite{BransonGover2005}) that if $\hat\theta=e^\sigma\theta$ is any other choice of contact form, then the corresponding operator $\widehat{P_4^\prime}$ is related to $P_4^\prime$ by
\begin{equation}
\label{eqn:cr_p_trans}
e^{2\sigma}\widehat{P_4^\prime}(f) = P_4^\prime(f) + P_4(\sigma f)
\end{equation}
for any $f\in\mP$.  Thus the relation between $P_4^\prime$ and $P_4$ is analogous to the relation~\eqref{eqn:conformal_q_trans} between the $Q$-curvature and the Paneitz operator; more precisely, the $P^\prime$-operator can be regarded as a $Q$-curvature operator in the sense of Branson and Gover~\cite{BransonGover2005}.  Moreover, since the Paneitz operator is self-adjoint and kills pluriharmonic functions, the transformation formula~\eqref{eqn:cr_p_trans} implies that
\[ e^{2\sigma}\widehat{P_4^\prime}(f) = P_4^\prime(f) \mod \mP^\perp \]
for any $f\in\mP$, returning $P_4^\prime$ to the status of a Paneitz-type operator.  This is the sense in which the $P^\prime$-operator is CR invariant, and is the way that it is studied in~\eqref{eqn:cr_mt}.

From its construction, one easily sees that $P_4^\prime(1)$ is exactly Hirachi's $Q$-curvature.  Thus, unlike the Paneitz operator, the $P^\prime$-operator does not necessarily kill constants.  However, there is a large and natural class of contact forms for which the $P^\prime$-operator does kill constants, namely the pseudo-Einstein contact forms; see Section~\ref{sec:lee} for their definition.  It turns out that two pseudo-Einstein contact forms $\hat\theta$ and $\theta$ must be related by a CR pluriharmonic function, $\log\hat\theta/\theta\in\mP$ (cf.\ \cite{Lee1988}).  If $(M^3,J)$ is the boundary of a domain in $\bC^2$, such contact forms exist in profusion, arising as solutions to Fefferman's Monge-Amp\`ere equation (cf.\ \cite{Fefferman1976,FeffermanHirachi2003}).  In this setting, it is natural to ask whether there is a scalar invariant $Q_4^\prime$ such that $P_4^\prime(1)=\frac{n-1}{2}Q_4^\prime$.  This is true; we will show that if $(M^3,J,\theta)$ is a pseudo-Einstein manifold, then the scalar invariant
\[ Q_4^\prime := \lim_{n\to1} \frac{4}{(n-1)^2}P_{4,n}(1) \]
is well-defined.  As a consequence, if $\hat\theta=e^\sigma\theta$ is another pseudo-Einstein contact form (in particular, $\sigma\in\mP$), then
\begin{equation}
\label{eqn:cr_q_trans}
e^{2\sigma}\widehat{Q_4^\prime} = Q_4^\prime + P_4^\prime(\sigma) + \frac{1}{2}P_4(\sigma^2) .
\end{equation}
Taking the point of view that $P_4^\prime$ is a Paneitz-type operator, we may also write
\[ e^{2\sigma}\widehat{Q_4^\prime} = Q_4^\prime + P_4^\prime(\sigma) \mod \mP^\perp . \]
The upshot is that, on the standard CR three-sphere, $Q_4^\prime=1$, so that this indeed recovers the interpretation of the Beckner--Onofri-type inequality~\eqref{eqn:cr_mt} of Branson--Fontana--Morpurgo~\cite{BransonFontanaMorpurgo2007} as an estimate involving some sort of Paneitz-type operator and $Q$-type curvature.  Additionally, we also see from~\eqref{eqn:cr_q_trans} that the integral of $Q_4^\prime$ is a CR invariant; more precisely, if $(M^3,J)$ is a compact CR three-manifold and $\theta,\hat\theta$ are two pseudo-Einstein contact forms, then
\[ \int_M \widehat{Q_4^\prime}\,\hat\theta\wedge d\hat\theta = \int_M Q_4^\prime\, \theta\wedge d\theta . \]

In conformal geometry, the total $Q$-curvature plays an important role in controlling the topology of the underlying manifold.  For instance, the total $Q$-curvature can be used to prove sphere theorems (e.g.\ \cite[Theorem~B]{Gursky1999} and~\cite[Theorem~A]{ChangGurskyYang2003}).  We will prove the following CR analogue of Gursky's theorem~\cite[Theorem~B]{Gursky1999}.

\begin{thm}
\label{thm:qprime_upper_bound}
Let $(M^3,J,\theta)$ be a compact three-dimensional pseudo-Einstein manifold with nonnegative Paneitz operator and nonnegative CR Yamabe constant.  Then
\[ \int_M Q_4^\prime \, \theta\wedge d\theta \leq \int_{S^3} Q_0^\prime\,\theta_0\wedge d\theta_0, \]
with equality if and only if $(M^3,J)$ is CR equivalent to the standard CR three sphere.
\end{thm}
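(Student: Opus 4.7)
The strategy is to adapt Gursky's four-dimensional conformal argument \cite{Gursky1999} to the CR setting, exploiting the transformation formula~\eqref{eqn:cr_q_trans} for $Q_4'$ under a pseudo-Einstein change of contact form. Because any two pseudo-Einstein contact forms differ by a pluriharmonic factor and $P_4$ annihilates $\mP$, the total integral $\int_M Q_4'\,\theta\wedge d\theta$ is a CR invariant, as observed in the introduction; this freedom lets us replace $\theta$ by any convenient pseudo-Einstein representative in the CR class.

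First, I would use the nonnegative CR Yamabe hypothesis together with the pseudo-Einstein assumption to find a pseudo-Einstein contact form $\theta^{*}=e^{\sigma}\theta$, $\sigma\in\mP$, whose Webster scalar curvature $R^{*}$ is constant (hence nonnegative). Solving the CR Yamabe equation within the pluriharmonic gauge is where both hypotheses enter: $P_4\geq 0$ provides closed range for the relevant fourth-order operator restricted to $\mP$, while $Y(M^3,J)\geq 0$ provides coercivity. Second, I would compute $Q_4'$ explicitly at $\theta^{*}$. Using the Gover--Graham formula for $P_{4,n}$ from \cite{GoverGraham2005} together with the vanishing of Hirachi's $Q$-curvature in the pseudo-Einstein case, the limit $Q_4'=\lim_{n\to 1}\tfrac{4}{(n-1)^2}P_{4,n}(1)$ should reduce, after integration by parts using the CR Bianchi identities, to a schematic identity
\[ \int_M Q_4'\,\theta^{*}\wedge d\theta^{*} = \int_M \left(c_1 (R^{*})^2 + c_2 \lv A^{*}\rv^2\right)\theta^{*}\wedge d\theta^{*} \]
for universal constants $c_1>0$ and $c_2\in\bR$, where $A^{*}$ denotes the pseudohermitian torsion of $\theta^{*}$.

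Third, the Paneitz nonnegativity is what controls the torsion contribution. Since $\theta^{*}$ is pseudo-Einstein, the torsion $A^{*}_{\alpha\beta}$ admits a natural pluriharmonic potential by \cite{Lee1988}; testing $P_4\geq 0$ against this potential bounds $\int \lv A^{*}\rv^2$ by a multiple of $\int (R^{*})^2$ modulo divergences. The CR Yamabe constant is maximized on the standard three-sphere (by the resolution of the CR Yamabe problem due to Jerison--Lee and Gamara), and for a Yamabe-type representative with constant $R^{*}$ one has $\int_M (R^{*})^2 \theta^{*}\wedge d\theta^{*} = Y(M^3,J)^2$. Combining these ingredients gives
\[ \int_M Q_4'\,\theta\wedge d\theta \leq c_1\,Y(M^3,J)^2 \leq c_1\,Y(S^3,J_0)^2 = \int_{S^3} Q_0'\,\theta_0\wedge d\theta_0. \]
Equality forces the CR Yamabe inequality to be saturated and $A^{*}=0$; the Jerison--Lee rigidity for extremals of the CR Yamabe problem on $S^3$ then yields CR equivalence to the standard three-sphere. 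The main obstacle is the second step: deriving the clean polynomial formula for $Q_4'$ in terms of $R^{*}$ and $A^{*}$ with the right signs, and checking that the Paneitz nonnegativity matches up with the torsion term in the integration-by-parts identity. A secondary obstacle is establishing the existence of the pseudo-Einstein Yamabe-type representative $\theta^{*}$, which amounts to a constrained variational problem on the pluriharmonic gauge slice.
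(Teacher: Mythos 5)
Your overall architecture (pass to a constant-scalar-curvature representative, compare $\int Q_4'$ with $\int R^2$, then invoke the sharp Yamabe comparison with rigidity) matches the paper's, but two of your key steps do not work as stated. First, you propose to find a \emph{pseudo-Einstein} contact form $\theta^{*}=e^{\sigma}\theta$ with $\sigma\in\mP$ and constant Webster scalar curvature, and then to use $\int_M (R^{*})^2 = Y(M^3,J)^2$. This requires $\theta^{*}$ to be simultaneously pseudo-Einstein and a global CR Yamabe minimizer, which is exactly the complication the paper flags as unavailable: the CR Yamabe contact form need not be pseudo-Einstein, and a constrained minimizer over the pluriharmonic gauge slice (even granting existence, which is itself unclear) would only satisfy $R^{*}\geq Y(M^3,J)$ --- the wrong direction for an upper bound by $\Lambda[S^3]^2$. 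The paper avoids this entirely: it takes $\hat\theta=e^{\sigma}\theta$ to be the genuine CR Yamabe contact form with $\sigma$ \emph{not} pluriharmonic, and computes how the local formula $Q_4'=2\Delta_b R-4\lv A_{\alpha\beta}\rv^2+R^2$ transforms under an arbitrary change of contact form. The resulting identity~\eqref{eqn:general_integral_q4prime} reads $\int\hat Q_4'=\int Q_4'+3\int(\sigma P_4\sigma+2Q\sigma)$; since $Q=0$ in the pseudo-Einstein gauge and $P_4\geq0$, this gives $\int_M Q_4'\,\theta\wedge d\theta\leq\int_M\hat Q_4'\,\hat\theta\wedge d\hat\theta$.

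Second, you assign the Paneitz nonnegativity the job of bounding the torsion term $\int\lv A^{*}\rv^2$ by $\int(R^{*})^2$. That step is neither needed nor consistent with the signs: the torsion enters the local formula as $-4\lv A_{\alpha\beta}\rv^2$, so after integration one has $\int\hat Q_4'\leq\int\hat R^2$ for free, the torsion term simply being discarded. The hypothesis $P_4\geq0$ is instead consumed in the monotonicity step above, i.e.\ in passing from the pseudo-Einstein contact form to the (possibly non-pseudo-Einstein) Yamabe contact form. Finally, the rigidity in the equality case $\Lambda[M]=\Lambda[S^3]$ is not supplied by the Jerison--Lee/Gamara existence theory alone; the paper invokes the CR Positive Mass Theorem of Cheng--Malchiodi--Yang for the conclusion that equality forces CR equivalence with the standard sphere.
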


Here, the CR Yamabe constant of a CR manifold $(M^3,J)$ is the infimum of the total Webster scalar curvature over all contact forms $\theta$ such that $\int\theta\wedge d\theta=1$ (cf.\ \cite{JerisonLee1987}).  The proof of Theorem~\ref{thm:qprime_upper_bound} relies upon the existence of a CR Yamabe contact form --- that is, the existence of a smooth unit-volume contact form with constant Webster scalar curvature equal to the CR Yamabe constant~\cite{ChengMalchiodiYang2013,JerisonLee1987}.  In particular, it relies on the CR Positive Mass Theorem~\cite{ChengMalchiodiYang2013}.  One complication which does not arise in the conformal case~\cite{Gursky1999} is the possibility that the CR Yamabe contact form may not be pseudo-Einstein.  We overcome this difficulty by computing how the local formula~\eqref{eqn:q4prime_crit} for $Q_4^\prime$ transforms with a general change of contact form; i.e.\ without imposing the pseudo-Einstein assumption.  For details, see Section~\ref{sec:qprime}.

In conformal geometry, the total $Q$-curvature also arises when considering the Euler characteristic of the underlying manifold.  Burns and Epstein~\cite{BurnsEpstein1988} have shown that there is a biholomorphic invariant, now known as the Burns--Epstein invariant, of the boundary of a strictly pseudoconvex domain which is related to the Euler characteristic of the domain in a similar way.  It turns out that the Burns--Epstein invariant is a constant multiple of the total $Q^\prime$-curvature, and thus there is a nice relationship between the total $Q^\prime$-curvature and the Euler characteristic.

\begin{thm}
\label{thm:burns_epstein}
Let $(M^3,J)$ be a compact CR manifold which admits a pseudo-Einstein contact form $\theta$, and denote by $\mu(M)$ the Burns--Epstein invariant of $(M^3,J)$.  Then
\[ \mu(M) = -16\pi^2\int_M Q^\prime\,\theta\wedge d\theta . \]
In particular, if $(M^3,J)$ is the boundary of a strictly pseudoconvex domain $X$, then
\[ \int_X\left(c_2 - \frac{1}{3}c_1^2\right) = \chi(X) - \frac{1}{16\pi^2}\int_M Q^\prime\,\theta\wedge d\theta , \]
where $c_1$ and $c_2$ are the first and second Chern forms of the K\"ahler--Einstein metric in $X$ obtained by solving Fefferman's equation and $\chi(X)$ is the Euler characteristic of $X$.
\end{thm}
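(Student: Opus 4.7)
The plan is to prove the two equalities in succession. The first is a local identification of the integrand of $\int_M Q^\prime\,\theta\wedge d\theta$ with the local integrand defining $\mu(M)$ in the pseudo-Einstein setting, while the second is a direct substitution into Burns and Epstein's Chern--Gauss--Bonnet-type formula for a strictly pseudoconvex domain equipped with its Fefferman K\"ahler--Einstein metric.

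For the first equality, the starting point is the explicit local formula for $Q_4^\prime$ on a pseudo-Einstein manifold obtained from the definition $Q_4^\prime=\lim_{n\to 1}\tfrac{4}{(n-1)^2}P_{4,n}(1)$; this should be derived in Section~\ref{sec:qprime}, and expresses $Q_4^\prime$ as a pseudohermitian polynomial in the Webster scalar curvature $R$, the torsion $A_{\alpha\beta}$, and their covariant derivatives. Burns and Epstein, on the other hand, gave a formula for $\mu(M)$ on a pseudo-Einstein contact form as the integral of an explicit combination of invariants of the same type. I would compare the two term by term; the constant $-16\pi^2$ should emerge from matching normalizations. Because the total integrals on both sides are independent of the chosen pseudo-Einstein contact form --- the Burns--Epstein side by biholomorphic invariance, and the $Q_4^\prime$ side by the transformation formula~\eqref{eqn:cr_q_trans} together with the self-adjointness of $P_4$ and its annihilation of pluriharmonic functions --- it suffices to verify the identity for a single contact form in each CR equivalence class.

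Once the first equality is established, the second is formal: Burns and Epstein's main result expresses $\chi(X)-\int_X(c_2-\tfrac{1}{3}c_1^2)$ as a fixed constant times $\mu(M)$, where the contact form on $M$ is the one induced by Fefferman's K\"ahler--Einstein defining function (which is indeed pseudo-Einstein). Substituting the first identity and tracking the overall $16\pi^2$ factor produces the claimed relation among $\int_M Q^\prime\,\theta\wedge d\theta$, $\chi(X)$, and the Chern numbers of $X$.

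The main obstacle is the explicit local calculation underlying the first equality. Extracting $Q_4^\prime$ from $\lim_{n\to 1}(n-1)^{-2}P_{4,n}(1)$ requires that both $P_{4,n}(1)$ (which is Hirachi's $Q$-curvature) and its first derivative in $n$, interpreted via analytic continuation in the dimension, vanish at $n=1$ under the pseudo-Einstein hypothesis; the desired formula then lives in the $(n-1)^2$ Taylor coefficient, which must be extracted cleanly from the Gover--Graham expression for $P_{4,n}$. Matching every resulting curvature and torsion term to the corresponding term in the Burns--Epstein formula is a finite but error-prone bookkeeping task in which constants and divergence terms must both be controlled carefully.
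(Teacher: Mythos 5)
Your strategy coincides with the one the paper indicates: the theorem is obtained by comparing the explicit pseudohermitian formula for the Burns--Epstein invariant from \cite{BurnsEpstein1988} (see also \cite{ChengLee1990}) with the local formula \eqref{eqn:q4prime_crit} for $Q_4^\prime$ on a pseudo-Einstein contact form, and the second identity then follows by substitution into Burns and Epstein's Gauss--Bonnet-type formula for the Fefferman K\"ahler--Einstein metric. Note that the paper does not itself carry out the term-by-term verification --- it credits the observation to Cheng and refers to Hirachi~\cite{Hirachi2013} for the details --- and the dimensional-continuation step you flag as the main obstacle is already resolved in Lemma~\ref{lem:qprime}, which yields $Q_4^\prime = 2\Delta_b R - 4\lv A_{\alpha\beta}\rv^2 + R^2$ in the critical dimension.
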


While we were discussing a preliminary version of this work at Banff in Summer 2012, it was suggested to us by Kengo Hirachi that a version of Theorem~\ref{thm:burns_epstein} should be true.  It was then pointed out to us by Jih-Hsin Cheng that Theorem~\ref{thm:burns_epstein} can be proved by using the formula given by Burns and Epstein~\cite{BurnsEpstein1988} (see also~\cite{ChengLee1990}) for their invariant.  This fact has since been independently verified by Hirachi~\cite{Hirachi2013}, to which we refer the reader for the details of the verification of Theorem~\ref{thm:burns_epstein}.

Finally, we point out that much of the background described above generalizes to higher dimensions.  On any even-dimensional Riemannian manifold $(M^{2n},g)$ there exists a pair $(P_{2n},Q_{2n})$ of a conformally-invariant differential operator $P_{2n}$ of the form $(-\Delta)^n$ plus lower order terms, the so-called GJMS operators~\cite{GJMS1992}, and scalar invariants $Q_{2n}$ of the form $(-\Delta)^{n-1}R$ plus lower-order terms, the so-called (critical) $Q$-curvatures~\cite{Branson1995}, which satisfy transformation rules analogous to~\eqref{eqn:conformal_p_trans} and~\eqref{eqn:conformal_q_trans}.  On the standard $2n$-sphere, Beckner~\cite{Beckner1993} and Chang--Yang~\cite{ChangYang1995} showed that the analogue of~\eqref{eqn:conformal_mt} still holds, including the characterization of equality.  Likewise, Branson, Fontana and Morpurgo~\cite{BransonFontanaMorpurgo2007} defined operators $P_{2n+2}^\prime$ on the standard CR $(2n+1)$-sphere which are CR invariant operators of order $2n+2$ and for which an analogue of~\eqref{eqn:cr_mt} holds, including the characterization of equality, where again $Q_{2n+2}^\prime$ are only identified as explicit constants.  After a preliminary version of this article was presented at Banff in Summer 2012, Hirachi~\cite{Hirachi2013} showed how to use the ambient calculus to extend the $P^\prime$-curvature and $Q^\prime$-curvature to higher dimensions in such a way that the transformation formulae~\eqref{eqn:cr_p_trans} and~\eqref{eqn:cr_q_trans} hold.  In a forthcoming work with Rod Gover, we produce tractor formulae for the $P^\prime$-operator and the $Q^\prime$-curvature.  This allows us to produce for pseudo-Einstein manifolds with vanishing torsion a product formula for the $P^\prime$-operator and an explicit formula for the $Q^\prime$-curvature, giving a geometric derivation of the formulae given by Branson, Fontana and Morpurgo~\cite{BransonFontanaMorpurgo2007}.

This article is organized as follows.  In Section~\ref{sec:bg}, we recall some basic definitions and facts in CR geometry, and in particular recall the depth of the analogy between aspects of conformal and CR geometry.  In Section~\ref{sec:lee}, we introduce the notion of a pseudo-Einstein contact form on a three-dimensional CR manifold, and explore some basic properties of such forms.  In Section~\ref{sec:defn}, we give a general formula for the Paneitz operator on a CR manifold $(M^{2n+1},J,\theta)$.  We then use this formula to give the definitions of the $P^\prime$-operator and the $Q^\prime$-curvature, and establish some of their basic properties.  In Section~\ref{sec:covariance}, we check by direct computation that the $P^\prime$-operator satisfies the correct transformation law.  Indeed, this computation shows that $P^\prime$ no longer satisfies this rule if it is considered on a space strictly larger than the space of CR pluriharmonic functions.  In Section~\ref{sec:qprime}, we check by direct computation that the $Q^\prime$-curvature satisfies the correct transformation law, and use this computation to prove Theorem~\ref{thm:qprime_upper_bound}.  In the appendices, we will derive in two different ways the local formula for the CR Paneitz operator in general dimension.  First, Appendix~\ref{sec:tractor} gives the derivation using the CR tractor calculus~\cite{GoverGraham2005}.  Second, Appendix~\ref{sec:fefferman} gives the derivation using Lee's construction~\cite{Lee1986} of the Fefferman bundle.


\section{CR Geometry}
\label{sec:bg}

Throughout this article, we will follow the conventions used by Gover and Graham~\cite{GoverGraham2005} for describing CR and pseudohermitian invariants and performing local computations using a choice of contact form.  These conventions are identical to the the conventions used by Lee in his work on pseudo-Einstein structures~\cite{Lee1988}, except that we will sometimes describe invariants as densities rather than functions.  This has the effect that exponential factors will generally not appear in our formulae for how these invariants transform under a change of contact form.  Both for the convenience of the reader and to hopefully avoid any confusion caused by the many different notations used in the literature, we use this section to make precise these conventions as necessary for this article.

\subsection{CR and pseudohermitian manifolds}
\label{sec:bg/cr}

A \emph{CR manifold} is a pair $(M^{2n+1},J)$ of a smooth oriented (real) $(2n+1)$-dimensional manifold together with a formally integrable complex structure $J\colon H\to H$ on a maximally nonintegrable codimension one subbundle $H\subset TM$.  In particular, the bundle $E=H^\perp\subset T^\ast M$ is orientable and any nonvanishing section $\theta$ of $E$ is a \emph{contact form}; i.e.\ $\theta\wedge (d\theta)^n$ is nonvanishing.  We will assume further that $(M^{2n+1},J)$ is \emph{strictly pseudoconvex}, meaning that the symmetric tensor $d\theta(\cdot,J\cdot)$ on $H^\ast\otimes H^\ast$ is positive definite; since $E$ is one-dimensional, this is independent of the choice of contact form $\theta$.

Given a CR manifold $(M^{2n+1},J)$, we can define the subbundle $T^{1,0}$ of the complexified tangent bundle $T_{\bC}M$ as the $+i$-eigenspace of $J$, and $T^{0,1}$ as its conjugate.  We likewise denote by $\Lambda^{1,0}$ the space of $(1,0)$-forms --- that is, the subbundle of $T_{\bC}^\ast M$ which annihilates $T^{0,1}$ --- and by $\Lambda^{0,1}$ its conjugate.  The \emph{canonical bundle $K$} is the complex line-bundle $K=\Lambda^{n+1}\left(\Lambda^{1,0}\right)$.

A \emph{pseudohermitian manifold} is a triple $(M^{2n+1},J,\theta)$ of a CR manifold $(M^{2n+1},J)$ together with a choice of contact form $\theta$.  The assumption that $d\theta(\cdot,J\cdot)$ is positive definite implies that the \emph{Levi form $L_\theta(U\wedge\bar V)=-2id\theta(U\wedge\bar V)$} defined on $T^{1,0}$ is a positive-definite Hermitian form.  Since another choice of contact form $\hat\theta$ is equivalent to a choice of (real-valued) function $\sigma\in C^\infty(M)$ such that $\hat\theta=e^\sigma\theta$, and the Levi forms of $\hat\theta$ and $\theta$ are related by $L_{\hat\theta}=e^\sigma L_\theta$, we see that the analogy between CR geometry and conformal geometry begins through the similarity of choosing a contact form or a metric in a conformal class (cf.\ \cite{JerisonLee1987}).

Given a pseudohermitian manifold $(M^{2n+1},J,\theta)$, the \emph{Reeb vector field $T$} is the unique vector field such that $\theta(T)=1$ and $T\contr d\theta=0$.  An \emph{admissible coframe} is a set of $(1,0)$-forms $\{\theta^\alpha\}_{\alpha=1}^n$ whose restriction to $T^{1,0}$ forms a basis for $\left(T^{1,0}\right)^\ast$ and such that $\theta^\alpha(T)=0$ for all $\alpha$.  Denote by $\theta^{\bar\alpha}=\overline{\theta^\alpha}$ the conjugate of $\theta^\alpha$.  Then $d\theta=ih_{\alpha\bar\beta}\theta^\alpha\wedge\theta^{\bar\beta}$ for some positive definite Hermitian matrix $h_{\alpha\bar\beta}$.  Denote by $\{T,Z_\alpha,Z_{\bar\alpha}\}$ the frame for $T_{\bC}M$ dual to $\{\theta,\theta^\alpha,\theta^{\bar\alpha}\}$, so that the Levi form is
\[ L_\theta\left(U^\alpha Z_\alpha,V^{\bar\alpha}Z_{\bar\alpha}\right) = h_{\alpha\bar\beta}U^\alpha V^{\bar\beta} . \]

Tanaka~\cite{Tanaka1975} and Webster~\cite{Webster1977} have defined a canonical connection on a pseudohermitian manifold $(M^{2n+1},J,\theta)$ as follows: Given an admissible coframe $\{\theta^\alpha\}$, define the \emph{connection forms $\omega_\alpha{}^\beta$} and the \emph{torsion form $\tau_\alpha=A_{\alpha\beta}\theta^\beta$} by the relations
\begin{align*}
d\theta^\beta & = \theta^\alpha\wedge\omega_\alpha{}^\beta + \theta\wedge\tau^\beta, \\
\omega_{\alpha\bar\beta}+\omega_{\bar\beta\alpha} & = dh_{\alpha\bar\beta}, \\
A_{\alpha\beta} & = A_{\beta\alpha},
\end{align*}
where we use the metric $h_{\alpha\bar\beta}$ to raise and lower indices; e.g.\ $\omega_{\alpha\bar\beta}=h_{\gamma\bar\beta}\omega_\alpha{}^\gamma$.  In particular, the connection forms are pure imaginary.  The connection forms define \emph{the pseudohermitian connection} on $T^{1,0}$ by $\nabla Z_\alpha=\omega_\alpha{}^\beta\otimes Z_\beta$, which is the unique connection preserving $T^{1,0}$, $T$, and the Levi form.

The curvature form $\Pi_\alpha{}^\beta:=d\omega_\alpha{}^\beta-\omega_\alpha{}^\gamma\wedge\omega_\gamma{}^\beta$ can be written
\[ \Pi_\alpha{}^\beta=R_\alpha{}^\beta{}_{\gamma\bar\delta}\theta^\gamma\wedge\theta^{\bar\delta} \mod \theta , \]
defining the curvature of $M$.  The \emph{pseudohermitian Ricci tensor} is the contraction $R_{\alpha\bar\beta}:=R_\gamma{}^\gamma{}_{\alpha\bar\beta}$ and the \emph{pseudohermitian scalar curvature} is the contraction $R:=R_\alpha{}^\alpha$.  As shown by Webster~\cite{Webster1977}, the contraction $\Pi_\gamma{}^\gamma$ is given by
\begin{equation}
\label{eqn:domegacontraction}
\Pi_\gamma{}^\gamma = d\omega_\gamma{}^\gamma = R_{\alpha\bar\beta}\theta^\alpha\wedge\theta^{\bar\beta} + \nabla^\beta A_{\alpha\beta} \theta^\alpha\wedge\theta - \nabla^{\bar\beta} A_{\bar\alpha\bar\beta} \theta^{\bar\alpha}\wedge\theta .
\end{equation}

For computational and notational efficiency, it will usually be more useful to work with the \emph{pseudohermitian Schouten tensor}
\[ P_{\alpha\bar\beta} := \frac{1}{n+2}\left( R_{\alpha\bar\beta} - \frac{1}{2(n+1)}Rh_{\alpha\bar\beta}\right) \]
and its trace $P:=P_\alpha{}^\alpha=\frac{R}{2(n+1)}$.  The following higher order derivatives
\begin{align*}
T_\alpha & = \frac{1}{n+2}\left(\nabla_\alpha P - i\nabla^\beta A_{\alpha\beta}\right) \\
S & = -\frac{1}{n}\left(\nabla^\alpha T_\alpha + \nabla^{\bar\alpha}T_{\bar\alpha} + P_{\alpha\bar\beta}P^{\alpha\bar\beta} - A_{\alpha\beta}A^{\alpha\beta}\right)
\end{align*}
will also appear frequently (cf.\ \cite{GoverGraham2005,Lee1986}).

In performing computations, we will usually use abstract index notation, so for example $\tau_\alpha$ will denote a $(1,0)$-form and $\nabla_\alpha\nabla_\beta f$ will denote the $(2,0)$-part of the Hessian of a function.  Of course, given an admissible coframe, these expressions give the components of the equivalent tensor.  The following commutator formulae established by Lee~\cite[Lemma~2.3]{Lee1988} will be useful.

\begin{lem}
\label{lem:lee_commutator}
Let $(M^{2n+1},J,\theta)$ be a pseudohermitian manifold.  Then
\begin{align*}
\nabla_\alpha\nabla_\beta f - \nabla_\beta\nabla_\alpha f & = 0, & \nabla_{\bar\beta}\nabla_\alpha f - \nabla_\alpha\nabla_{\bar\beta}f & = ih_{\alpha\bar\beta}\nabla_0f, \\
\nabla_\alpha\nabla_0f - \nabla_0\nabla_\alpha f & = A_{\alpha\gamma}\nabla^\gamma f, & \nabla^\beta\nabla_0\tau_\alpha - \nabla_0\nabla^\beta\tau_\alpha & = A^{\gamma\beta}\nabla_\gamma\tau_\alpha + \tau_\gamma\nabla_\alpha A^{\gamma\beta} ,
\end{align*}
where $\nabla_0$ denotes the derivative in the direction $T$.
\end{lem}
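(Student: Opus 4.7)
The plan is to derive all four commutator identities from the structure equations
\begin{align*}
d\theta &= ih_{\alpha\bar\beta}\,\theta^\alpha\wedge\theta^{\bar\beta}, \\
d\theta^\beta &= \theta^\alpha\wedge\omega_\alpha{}^\beta + \theta\wedge\tau^\beta,
\end{align*}
together with the identity $d^2=0$, which effectively encodes both the first Bianchi identity and the absence of torsion in directions it is not explicitly prescribed. The first three identities concern only functions and can be read off from $d(df)=0$, while the fourth requires the curvature form of $\nabla$ on $T^{1,0}$.

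For the function identities, I would write $df = f_\alpha\theta^\alpha + f_{\bar\alpha}\theta^{\bar\alpha} + f_0\theta$ where $f_\alpha = \nabla_\alpha f$ etc. Applying $d$ a second time, using the rule $\nabla_X g = (\nabla g)(X)$ and substituting the structure equations for $d\theta^\alpha$, $d\theta^{\bar\alpha}$, $d\theta$, produces
\[ 0 = (\text{terms in }\theta^\alpha\wedge\theta^\beta) + (\text{terms in }\theta^\alpha\wedge\theta^{\bar\beta}) + (\text{terms in }\theta^\alpha\wedge\theta) + (\text{conjugates}). \]
Setting each bidegree component to zero yields, respectively, the symmetry of the $(2,0)$-Hessian ($\nabla_\alpha\nabla_\beta f = \nabla_\beta\nabla_\alpha f$), the mixed commutator with its torsion term $ih_{\alpha\bar\beta}f_0$ coming from the $\theta$-contribution in $d\theta$, and the $(1,0)$-$0$ commutator with its torsion term $A_{\alpha\gamma}f^\gamma$ coming from the $\theta\wedge\tau^\beta$ contribution in the structure equation for $d\theta^\beta$. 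This is essentially bookkeeping; the only care needed is tracking the sign conventions that arise when one lowers or raises indices using $h_{\alpha\bar\beta}$ and when re-expressing $\theta^\gamma\wedge\omega_\gamma{}^\beta$ via the relation $\omega_{\alpha\bar\beta} + \omega_{\bar\beta\alpha} = dh_{\alpha\bar\beta}$.

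The fourth identity is the substantive one, since it concerns a $(1,0)$-form $\tau_\alpha$ rather than a function, so the curvature of the pseudohermitian connection enters. I would proceed in two steps. First, from the definition of the curvature form $\Pi_\alpha{}^\beta = d\omega_\alpha{}^\beta - \omega_\alpha{}^\gamma\wedge\omega_\gamma{}^\beta$ and the resulting commutator $[\nabla_X,\nabla_Y]\tau_\alpha - \nabla_{[X,Y]}\tau_\alpha = -\Pi_\alpha{}^\gamma(X,Y)\tau_\gamma$, applied to $X=Z_{\bar\beta}$, $Y=T$, one obtains
\[ \nabla_{\bar\beta}\nabla_0\tau_\alpha - \nabla_0\nabla_{\bar\beta}\tau_\alpha = -\Pi_\alpha{}^\gamma(Z_{\bar\beta},T)\,\tau_\gamma + \nabla_{[Z_{\bar\beta},T]}\tau_\alpha. \]
Second, the coefficient $\Pi_\alpha{}^\gamma(Z_{\bar\beta},T)$ and the bracket $[Z_{\bar\beta},T]$ are both computed by evaluating $d\theta^\gamma = \theta^\delta\wedge\omega_\delta{}^\gamma + \theta\wedge\tau^\gamma$ and $d\theta = ih_{\delta\bar\delta}\theta^\delta\wedge\theta^{\bar\delta}$ on the pair $(Z_{\bar\beta},T)$ and invoking the identity contained in the already-proved part (the $\theta$-component of $d^2\theta^\beta = 0$, which relates the curvature in the $(1,0)$-$0$ plane to derivatives of the torsion $A_{\alpha\beta}$). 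After raising $\beta$, the right-hand side collapses to the claimed expression $A^{\gamma\beta}\nabla_\gamma\tau_\alpha + \tau_\gamma\nabla_\alpha A^{\gamma\beta}$.

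I expect the genuine obstacle to be organizing the bookkeeping in the last step: identifying precisely which component of the curvature form $\Pi_\alpha{}^\gamma$ in the mixed $(\bar\beta,0)$-direction equals which derivative of $A$. This requires differentiating the structure equation $d\theta^\beta = \theta^\alpha\wedge\omega_\alpha{}^\beta + \theta\wedge\tau^\beta$ one more time, substituting $d\tau^\beta$ and $d\omega_\alpha{}^\beta$, and matching the $\theta^\alpha\wedge\theta^{\bar\gamma}\wedge\theta$ components. Once this curvature-torsion identity is in hand, the fourth commutator formula is immediate; the remaining three follow from $d^2f=0$ with only careful sign-tracking.
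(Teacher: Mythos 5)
Your proposal is correct and is the standard derivation: the paper itself gives no proof of this lemma, citing instead Lee's Lemma~2.3 in~\cite{Lee1988}, whose argument is precisely the one you outline --- expand $d(df)=0$ using the structure equations $d\theta=ih_{\alpha\bar\beta}\theta^\alpha\wedge\theta^{\bar\beta}$ and $d\theta^\beta=\theta^\alpha\wedge\omega_\alpha{}^\beta+\theta\wedge\tau^\beta$ and match bidegree components for the three function identities, then bring in the curvature form $\Pi_\alpha{}^\beta$ (whose $\theta^{\bar\gamma}\wedge\theta$ component, obtained from $d^2\theta^\beta=0$, carries the $\nabla A$ terms) for the identity on one-forms. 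The only caveat worth noting is that in abstract index notation $\nabla^\beta\nabla_0\tau_\alpha$ already denotes the tensorial second derivative, so the correction term is supplied by the torsion of the Tanaka--Webster connection rather than literally by $\nabla_{[Z_{\bar\beta}\,,\,T]}$, but this is a presentational point and does not affect the outcome.
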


The following consequences of the Bianchi identities established in~\cite[Lemma~2.2]{Lee1988} will also be useful.

\begin{lem}
\label{lem:lee_bianchi}
Let $(M^{2n+1},J,\theta)$ be a pseudohermitian manifold.  Then
\begin{align}
\label{eqn:schouten_bianchi} \nabla^\alpha P_{\alpha\bar\beta} & = \nabla_{\bar\beta}P + (n-1)T_{\bar\beta} \\
\label{eqn:nabla_0r} \nabla_0 R & = \nabla^\alpha\nabla^\beta A_{\alpha\beta} + \nabla_\alpha\nabla_\beta A^{\alpha\beta} .
\end{align}
\end{lem}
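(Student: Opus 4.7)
The plan is to extract both identities as components of the second Bianchi identity for the Tanaka--Webster connection, applied to the contracted curvature form $\Pi_\gamma{}^\gamma = d\omega_\gamma{}^\gamma$ given by \eqref{eqn:domegacontraction}. Since this is the exterior derivative of a globally defined one-form, $d(d\omega_\gamma{}^\gamma) = 0$ identically, so applying $d$ to both sides of \eqref{eqn:domegacontraction} yields a closed three-form on $M^{2n+1}$ which must vanish. Sorting by bidegree in the admissible coframe $(\theta, \theta^\alpha, \theta^{\bar\beta})$ then peels off the two scalar identities of the lemma as independent components.

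First I would compute $d$ of the right-hand side of \eqref{eqn:domegacontraction} using the structure equations $d\theta = ih_{\alpha\bar\beta}\theta^\alpha\wedge\theta^{\bar\beta}$, $d\theta^\beta = \theta^\alpha\wedge\omega_\alpha{}^\beta + \theta\wedge\tau^\beta$, and the metric condition $\omega_{\alpha\bar\beta} + \omega_{\bar\beta\alpha} = dh_{\alpha\bar\beta}$, together with the fact that covariant derivatives of tensor components differ from coordinate derivatives by connection terms that cancel correctly. The result naturally decomposes into a pure $(2,1)$-component and its conjugate (not involving $\theta$) together with terms containing a single factor of $\theta$ paired with a $(1,1)$- or $(2,0)$- or $(0,2)$-form. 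Each bidegree component must vanish separately.

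The $(2,1)$-component (with its conjugate) produces a relation of the schematic form
\[ \nabla_{\bar\gamma} R_{\alpha\bar\beta} - \nabla_{\bar\beta} R_{\alpha\bar\gamma} = i\bigl(h_{\alpha\bar\beta}\nabla^{\bar\delta} A_{\bar\gamma\bar\delta} - h_{\alpha\bar\gamma}\nabla^{\bar\delta}A_{\bar\beta\bar\delta}\bigr), \]
modulo sign conventions. Contracting in $\alpha\bar\gamma$ with $h^{\alpha\bar\gamma}$ expresses $\nabla^\alpha R_{\alpha\bar\beta}$ in terms of $\nabla_{\bar\beta} R$ and $\nabla^{\bar\delta}A_{\bar\beta\bar\delta}$. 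Then substituting the definitions $R_{\alpha\bar\beta} = (n+2)P_{\alpha\bar\beta} + Ph_{\alpha\bar\beta}$, $P = \frac{R}{2(n+1)}$, and $(n+2)T_{\bar\beta} = \nabla_{\bar\beta} P + i\nabla^{\bar\delta}A_{\bar\beta\bar\delta}$ yields \eqref{eqn:schouten_bianchi} after routine algebra. For \eqref{eqn:nabla_0r}, I would extract the trace of the $\theta\wedge\theta^\alpha\wedge\theta^{\bar\beta}$ component of $d(d\omega_\gamma{}^\gamma) = 0$ by contracting with $h^{\alpha\bar\beta}$; the two bidegree-$(1,1)$ $\theta$-pieces of the $R_{\alpha\bar\beta}\theta^\alpha\wedge\theta^{\bar\beta}$ term contribute $\nabla_0 R$, while differentiating the last two terms of \eqref{eqn:domegacontraction} and tracing produces precisely $\nabla^\alpha\nabla^\beta A_{\alpha\beta} + \nabla_\alpha\nabla_\beta A^{\alpha\beta}$.

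The main obstacle is the careful bookkeeping of torsion contributions. The terms $d(\theta\wedge\tau^\beta)$ generate both $d\theta\wedge\tau^\beta$ and $\theta\wedge d\tau^\beta$, and via $d\theta = ih_{\alpha\bar\beta}\theta^\alpha\wedge\theta^{\bar\beta}$ the former feeds into pure $(2,1)$ and $(1,2)$ components, entangling bidegrees. Isolating the correct trace pieces --- distinguishing the tracefree part of the $(2,1)$-component from the part that gives the contracted identity, and the $h^{\alpha\bar\beta}$-trace of the $\theta\wedge(1,1)$-component from its tracefree complement --- without sign errors is the most delicate step. A more hands-on alternative would be to start from the Bianchi identity for $\Pi_\alpha{}^\beta$ directly in index form and then contract, but the $d^2 = 0$ route on $\omega_\gamma{}^\gamma$ is cleaner since it bypasses the symmetries of $R_\alpha{}^\beta{}_{\gamma\bar\delta}$ entirely.
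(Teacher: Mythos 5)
The paper offers no proof of its own here --- the lemma is quoted verbatim from Lee's Lemma~2.2 --- and Lee's derivation is exactly the route you propose: apply $d$ to the structure equation~\eqref{eqn:domegacontraction} (equivalently $d^2\omega_\gamma{}^\gamma=0$), separate by bidegree in the admissible coframe, and contract, so your plan is correct and essentially the standard one. The only caution is the one you already flag: with the paper's conventions the contracted $(1,2)$-identity must come out as $\nabla^\alpha R_{\alpha\bar\beta}=\nabla_{\bar\beta}R+i(n-1)\nabla^{\bar\delta}A_{\bar\beta\bar\delta}$ (your displayed schematic form gives the opposite sign on the torsion term) in order for the substitutions $R_{\alpha\bar\beta}=(n+2)P_{\alpha\bar\beta}+Ph_{\alpha\bar\beta}$ and $T_{\bar\beta}=\tfrac{1}{n+2}\bigl(\nabla_{\bar\beta}P+i\nabla^{\bar\delta}A_{\bar\beta\bar\delta}\bigr)$ to yield~\eqref{eqn:schouten_bianchi}.
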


In particular, combining the results of Lemma~\ref{lem:lee_commutator} and Lemma~\ref{lem:lee_bianchi} yields
\begin{equation}
\label{eqn:subplacian_R}
\Delta_b R - 2n\Imaginary\nabla^\alpha\nabla^\beta A_{\alpha\beta} = -2\nabla^\alpha\left(\nabla_\alpha R - in\nabla^\beta A_{\alpha\beta}\right) .
\end{equation}

An important operator in the study of pseudohermitian manifolds is the \emph{sublaplacian}
\[ \Delta_b := -\left(\nabla^\alpha\nabla_\alpha + \nabla_\alpha\nabla^\alpha\right) . \]
Defining the \emph{subgradient $\nabla_bu$} as the projection of $du$ onto $H^\ast\otimes\bC$ --- that is, $\nabla_bf=\nabla_\alpha f+\nabla_{\bar\alpha}f$ --- it is easy to show that
\[ \int_M u\Delta_b v\,\theta\wedge d\theta^n = \int_M \lp\nabla_b u,\nabla_b v\rp \theta\wedge d\theta^n \]
for any $u,v\in C^\infty(M)$, at least one of which is compactly supported, and where $\lp\cdot,\cdot\rp$ denotes the Levi form.

One important consequence of Lemma~\ref{lem:lee_commutator} is that the operator $C$ has the following two equivalent forms:
\begin{equation}
\label{eqn:subplacian_squared}
\begin{split}
Cf & := \Delta_b^2f + n^2\nabla_0^2f - 2in\nabla_\beta\left(A^{\alpha\beta}\nabla_\alpha f\right) + 2in\nabla^\beta\left(A_{\alpha\beta}\nabla^\alpha f\right) \\
& = 4\nabla^\alpha\left(\nabla_\alpha\nabla_\beta\nabla^\beta f + in A_{\alpha\beta}\nabla^\beta f\right) .
\end{split}
\end{equation}
In dimension $n=1$, the operator $C$ is the compatibility operator found by Graham and Lee~\cite{GrahamLee1988}.  Hirachi~\cite{Hirachi1990} later observed that in this dimension $C$ is a CR covariant operator, in the sense that it satisfies a particularly simple transformation formula under a change of contact form.  Thus, in this dimension $C$ is the CR Paneitz operator $P_4$; for further discussion, see Section~\ref{sec:defn}.

\subsection{CR pluriharmonic functions}
\label{sec:bg/pluriharmonic}

Given a CR manifold $(M^{2n+1},J)$, a \emph{CR pluriharmonic function} is a function $u\in C^\infty(M)$ which is locally the real part of a \emph{CR function} $v\in C^\infty(M;\bC)$; i.e.\ $u=\Real(v)$ for $v$ satisfying $\dbar v:=\nabla_{\bar\alpha}v=0$.  We will denote by $\mP$ the space of pluriharmonic functions on $M$, which is usually an infinite-dimensional vector space.  When additionally a choice of contact form $\theta$ is given, Lee~\cite{Lee1988} proved the following alternative characterization of CR pluriharmonic functions which does not require solving $\dbar v=0$.

\begin{prop}
\label{prop:pluriharmonic}
Let $(M^{2n+1},J,\theta)$ be a pseudohermitian manifold.  A function $u\in C^\infty(M)$ is CR pluriharmonic if and only if
\begin{align*}
B_{\alpha\bar\beta}u := \nabla_{\bar\beta}\nabla_\alpha u - \frac{1}{n}\nabla^\gamma\nabla_\gamma u\,h_{\alpha\bar\beta} & = 0, \qquad \text{if $n\geq 2$} \\
P_\alpha u := \nabla_\alpha\nabla_\beta\nabla^\beta u + inA_{\alpha\beta}\nabla^\beta u & = 0, \qquad \text{if $n=1$} .
\end{align*}
\end{prop}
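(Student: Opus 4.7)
The two directions have very different flavors: necessity is a direct computation using the commutator formulas of Lemma~\ref{lem:lee_commutator}, while sufficiency requires constructing a local CR function $v$ with $\Real(v) = u$, which amounts to solving a $\dbar_b$-type integrability problem.

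For necessity, write $u = \frac{1}{2}(v + \bar v)$ on a local chart where $\nabla_{\bar\alpha} v = 0$ (and hence $\nabla_\alpha \bar v = 0$). Applying Lemma~\ref{lem:lee_commutator}, I would compute
\[
\nabla_{\bar\beta}\nabla_\alpha v = \nabla_\alpha\nabla_{\bar\beta}v + ih_{\alpha\bar\beta}\nabla_0 v = ih_{\alpha\bar\beta}\nabla_0 v,
\]
and similarly $\nabla_{\bar\beta}\nabla_\alpha \bar v = -ih_{\alpha\bar\beta}\nabla_0\bar v$. Summing, $\nabla_{\bar\beta}\nabla_\alpha u$ is pure trace with respect to $h_{\alpha\bar\beta}$, so $B_{\alpha\bar\beta}u = 0$ for $n \geq 2$. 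When $n = 1$, $B_{\alpha\bar\beta}u$ is vacuously zero, so one instead checks $P_\alpha u = 0$. Using the same decomposition, $\nabla^\beta\nabla_\beta u = \frac{1}{2}(\nabla^\beta\nabla_\beta v + \nabla^\beta\nabla_\beta \bar v)$; applying $\nabla_\alpha$ and commuting derivatives past each other using both the $[\nabla_\alpha, \nabla_{\bar\beta}]$ and $[\nabla_\alpha, \nabla_0]$ formulae in Lemma~\ref{lem:lee_commutator}, the torsion terms produced by reordering should precisely cancel $inA_{\alpha\beta}\nabla^\beta u$, leaving $P_\alpha u = 0$.

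For sufficiency, I would seek $v = u + iw$ with $w$ a real function satisfying $\nabla_{\bar\alpha}v = 0$, i.e.\ $\nabla_{\bar\alpha}w = i\nabla_{\bar\alpha}u$ and (by conjugation) $\nabla_\alpha w = -i\nabla_\alpha u$. Thus I want to produce locally a real primitive $w$ of the real $1$-form
\[
\omega \;=\; -i\nabla_\alpha u\,\theta^\alpha + i\nabla_{\bar\alpha} u\,\theta^{\bar\alpha} + s\,\theta
\]
for some real function $s$ (which prescribes $Tw$). By the Poincar\'e lemma it suffices to choose $s$ so that $d\omega = 0$. Expanding $d\omega$ in the admissible coframe using the structure equations for $d\theta^\alpha$ and $d\theta$, the coefficient of $\theta^\alpha \wedge \theta^{\bar\beta}$ works out to a multiple of $\nabla_{\bar\beta}\nabla_\alpha u - \nabla_\alpha\nabla_{\bar\beta}u - 2ih_{\alpha\bar\beta}s$, which after invoking the commutator $[\nabla_\alpha, \nabla_{\bar\beta}] = -ih_{\alpha\bar\beta}\nabla_0$ is traceless plus a pure-trace piece; the pure-trace piece fixes $s = \frac{1}{2}\nabla_0 u$, while the traceless piece vanishes exactly when $B_{\alpha\bar\beta}u = 0$. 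This disposes of the case $n \geq 2$. The $(2,0)$ and $(0,2)$ parts vanish automatically because $\nabla$ is torsion-free in the sense that $\nabla_\alpha\nabla_\beta u = \nabla_\beta\nabla_\alpha u$ (Lemma~\ref{lem:lee_commutator}).

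When $n = 1$, the $B$-condition is empty and one must instead exploit the closedness of $\omega$ along the $\theta \wedge \theta^\alpha$ components. With $s = \frac{1}{2}\nabla_0 u$, I would compute the coefficient of $\theta \wedge \theta^\alpha$ in $d\omega$; using the torsion commutator $[\nabla_\alpha, \nabla_0]f = A_{\alpha\gamma}\nabla^\gamma f$, this coefficient can be rewritten so that its vanishing is equivalent to $P_\alpha u = 0$. Once $d\omega = 0$, Poincar\'e gives $w$ locally, and then $v = u + iw$ is a local CR function with $\Real(v) = u$, proving $u \in \mP$. The main obstacle in executing the plan is this last step: the $n = 1$ calculation is delicate because the would-be traceless condition is vacuous, so one must carefully extract the third-order torsion-corrected condition $P_\alpha u = 0$ from the vertical component of $d\omega$, and verify that the two a priori real-valued conditions (the $\theta \wedge \theta^1$ and $\theta \wedge \theta^{\bar 1}$ components) are complex conjugates of a single scalar equation $P_\alpha u = 0$.
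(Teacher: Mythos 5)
The paper itself gives no proof of this proposition --- it is quoted from Lee~\cite{Lee1988} --- and your plan is essentially Lee's original argument: necessity by commuting derivatives on $u=\frac12(v+\bar v)$, and sufficiency by choosing $s$ so that the real one-form $\omega=i\dbar_b u-i\partial_b u+s\theta$ is closed and then applying the Poincar\'e lemma. The necessity half checks out (indeed $\nabla_{\bar\beta}\nabla_\alpha\bar v=0$ outright since $\nabla_\alpha\bar v\equiv0$, and the torsion cancellation you describe for $n=1$ does work, using $[\nabla_\alpha,\nabla_0]f=A_{\alpha\gamma}\nabla^\gamma f$).

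There are two genuine problems in the sufficiency half. First, your displayed $(1,1)$-coefficient of $d\omega$ is the \emph{commutator} $\nabla_{\bar\beta}\nabla_\alpha u-\nabla_\alpha\nabla_{\bar\beta}u-2ih_{\alpha\bar\beta}s$; but by Lemma~\ref{lem:lee_commutator} that commutator equals $ih_{\alpha\bar\beta}\nabla_0u$ identically, so it is pure trace for \emph{every} $u$, your choice $s=\frac12\nabla_0u$ makes the whole coefficient vanish identically, and no condition on $u$ survives --- taken literally, your argument would show that every smooth function is CR pluriharmonic. The correct coefficient is (a multiple of) the \emph{anticommutator} $\nabla_{\bar\beta}\nabla_\alpha u+\nabla_\alpha\nabla_{\bar\beta}u+sh_{\alpha\bar\beta}$: its trace fixes $s$ as a multiple of $\Delta_b u$ (not of $\nabla_0u$), and its trace-free part equals $2B_{\alpha\bar\beta}u$ after one application of the commutator identity, which is where the hypothesis actually enters. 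Second, you declare the case $n\ge2$ finished after the $(1,1)$, $(2,0)$ and $(0,2)$ components, but $d\omega$ also has $\theta\wedge\theta^\alpha$ (and conjugate) components in every dimension, and their vanishing is precisely the condition $P_\alpha u=0$ that you analyze only for $n=1$. To close the $n\ge2$ case you must note that $B_{\alpha\bar\beta}u\equiv0$ forces $P_\alpha u=0$ via the divergence identity~\eqref{eqn:grahamlee}; this implication is exactly why the characterization drops to a second-order condition when $n\ge2$ and requires the third-order condition only when $n=1$.
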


Using Lemma~\ref{lem:lee_commutator}, it is straightforward to check that (cf.\ \cite{GrahamLee1988})
\begin{equation}
\label{eqn:grahamlee}
\nabla^{\bar\beta}\left(B_{\alpha\bar\beta}u\right) = \frac{n-1}{n}P_\alpha u .
\end{equation}
In particular, we see that the vanishing of $B_{\alpha\bar\beta}u$ implies the vanishing of $P_\alpha u$ when $n>1$.  Moreover, the condition $B_{\alpha\bar\beta}u=0$ is vacuous when $n=1$, and by~\eqref{eqn:grahamlee}, we can consider the condition $P_\alpha u=0$ from Proposition~\ref{prop:pluriharmonic} as the ``residue'' of the condition $B_{\alpha\bar\beta}u=0$ (cf.\ Section~\ref{sec:lee} and Section~\ref{sec:defn}).

Note also that, using the second expression in~\eqref{eqn:subplacian_squared}, we have that $C=4\nabla^\alpha P_\alpha$.  In particular, it follows that $\mP\subset\ker P_4$ for three-dimensional CR manifolds $(M^3,J)$.  It is easy to see that this is an equality when $(M^3,J)$ admits a torsion-free contact form (cf.\ \cite{GrahamLee1988}), but a good characterization of when equality holds is not yet known.

\subsection{CR density bundles}
\label{sec:bg/density}

One generally wants to study CR geometry using \emph{CR invariants}; i.e.\ using invariants of a CR manifold $(M^{2n+1},J)$.  However, it is frequently easier to do geometry by making a choice of contact form $\theta$ so as to make use of the Levi form and the associated pseudohermitian connection.  If one takes this point of view, it then becomes important to know how the pseudohermitian connection and the pseudohermitian curvatures transform under a change of contact form, and also to have a convenient way to describe objects which transform in a simple way with a change of contact form.  This goal is met using CR density bundles.

Given a CR manifold $(M^{2n+1},J)$, choose a $(n+2)$-nd root of the canonical bundle $K$ and denote it by $E(1,0)$; this can always be done locally, and since we are entirely concerned with local invariants, this poses no problems.  Given any $w,w^\prime\in\bR$ with $w-w^\prime\in\bZ$, the \emph{$(w,w^\prime)$-density bundle $E(w,w^\prime)$} is the complex line bundle
\[ E(w,w^\prime) = E(1,0)^{\otimes w} \otimes \overline{E(1,0)}^{\otimes w^\prime} . \]
For our purposes, the important property of $E(w,w^\prime)$ is that a choice of contact form $\theta$ induces an isomorphism between the space $\mE(w,w^\prime)$ of smooth sections of $E(w,w^\prime)$ and $C^\infty(M;\bC)$,
\[ \mE(w,w^\prime) \ni u \cong u_\theta \in C^\infty(M;\bC), \]
with the property that if $\hat\theta=e^\sigma\theta$ is another choice of contact form, then $u_{\hat\theta}$ is related to $u_\theta$ by
\begin{equation}
\label{eqn:complex_transformation}
u_{\hat\theta} = e^{\frac{w}{2}\sigma}e^{\frac{w^\prime}{2}\bar\sigma} u_\theta ;
\end{equation}
for details, see~\cite{GoverGraham2005}.  We will also consider density-valued tensor bundles; for example, we will denote by $E^\alpha(w,w^\prime)$ and $E^{\bar\alpha}(w,w^\prime)$ the tensor products $T^{1,0}\otimes E(w,w^\prime)$ and $T^{0,1}\otimes E(w,w^\prime)$, respectively, and by $\mE^\alpha(w,w^\prime)$ and $\mE^{\bar\alpha}(w,w^\prime)$ their respective spaces of smooth sections.  In this way, we may regard the Levi form as the density $h_{\alpha\bar\beta}\in\mE_{\alpha\bar\beta}(1,1)$, thereby suppressing the exponential factor which normally appears when writing how it transforms under a change of contact structure.

Since we will be primarily interested in real-valued functions and tensors, our primary interest will be in the $(w,w)$-density bundles and tensor products thereof.  In particular, if $u\in\mE(w,w)$ is real-valued and we restrict ourselves to real-valued contact forms, the transformation rule~\eqref{eqn:complex_transformation} becomes $u_{\hat\theta}=e^{w\sigma}u_\theta$.

In~\cite{Lee1988} (see also~\cite{GoverGraham2005}), the transformation formulae for the pseudohermitian connection and its torsion and curvatures under a change of contact form are given, which we record below:

\begin{lem}
\label{lem:cr_change}
Let $(M^{2n+1},J,\theta)$ be a pseudohermitian manifold and regard the torsion $A_{\alpha\beta}\in\mE_{\alpha\beta}(0,0)$, the pseudohermitian Schouten tensor $P_{\alpha\bar\beta}\in\mE_{\alpha\bar\beta}(0,0)$, and its trace $P=P_\alpha{}^\alpha\in\mE(-1,-1)$.  Additionally, let $f\in\mE(w,w)$ and $\tau_\alpha\in\mE_\alpha(w,w)$.  If $\hat\theta=e^\sigma\theta$ is another choice of contact form and $\hat A_{\alpha\beta},\hat P_{\alpha\bar\beta},\hat P$ are its torsion, pseudohermitian Schouten tensor, and its trace, respectively, then
\begin{align*}
\hat A_{\alpha\beta} & = A_{\alpha\beta} + i\nabla_\beta\nabla_\alpha\sigma - i(\nabla_\alpha\sigma)(\nabla_\beta\sigma) \\
\hat P_{\alpha\bar\beta} & = P_{\alpha\bar\beta} - \frac{1}{2}\left(\nabla_{\bar\beta}\nabla_\alpha\sigma+\nabla_\alpha\nabla_{\bar\beta}\sigma\right) - \frac{1}{2}\lv\nabla_\gamma\sigma\rv^2 h_{\alpha\bar\beta} \\
\hat P & = P + \frac{1}{2}\Delta_b\sigma - \frac{n}{2}\lv\nabla_\gamma\sigma\rv^2 \\
\hat\nabla_\alpha f & = \nabla_\alpha f + wf\nabla_\alpha\sigma \\
\hat\nabla_0 f & = \nabla_0f + i(\nabla_\alpha\sigma)(\nabla^\alpha f) - i(\nabla^\alpha\sigma)(\nabla_\alpha f) + wf\nabla_0\sigma \\
\hat\nabla_\alpha\tau_\beta & = \nabla_\alpha\tau_\beta + (w-1)\tau_\beta\nabla_\alpha\sigma - \tau_\alpha\nabla_\beta\sigma \\
\hat\nabla_{\bar\beta}\tau_\alpha & = \nabla_{\bar\beta}\tau_\alpha + w\tau_\alpha\nabla_{\bar\beta}\sigma + \tau_\gamma\nabla^\gamma\sigma h_{\alpha\bar\beta} .
\end{align*}
\end{lem}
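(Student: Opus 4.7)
The plan is to derive each of these transformation formulae as direct consequences of the defining structure equations for the Tanaka--Webster connection, by working in an admissible coframe for $\hat\theta$ carefully chosen to be adapted to one for $\theta$.  Specifically, given $\{\theta^\alpha\}$ admissible for $\theta$, I would set
\[
\hat\theta^\alpha := \theta^\alpha + i\,\sigma^\alpha\,\theta,\qquad \sigma^\alpha := h^{\alpha\bar\beta}\nabla_{\bar\beta}\sigma.
\]
A short computation then shows that the Reeb field of $\hat\theta$ is $\hat T = e^{-\sigma}\bigl(T - i\sigma^\alpha Z_\alpha + i\sigma^{\bar\alpha} Z_{\bar\alpha}\bigr)$, that $\hat\theta^\alpha(\hat T)=0$ so the $\hat\theta^\alpha$ are admissible, that the dual horizontal frame satisfies $\hat Z_\alpha = Z_\alpha$, and that expanding $d\hat\theta = e^\sigma(d\sigma\wedge\theta + d\theta)$ in the new coframe yields the coframe-components relation $\hat h_{\alpha\bar\beta} = e^\sigma h_{\alpha\bar\beta}$, consistent with interpreting $h_{\alpha\bar\beta}\in\mE_{\alpha\bar\beta}(1,1)$.

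To obtain $\hat\omega_\alpha{}^\beta$ and $\hat\tau^\beta$, I would substitute the coframe change into $d\hat\theta^\beta = \hat\theta^\alpha\wedge\hat\omega_\alpha{}^\beta + \hat\theta\wedge\hat\tau^\beta$, expand via $d\theta^\beta = \theta^\alpha\wedge\omega_\alpha{}^\beta + \theta\wedge\tau^\beta$, and sort terms by bidegree and by the power of $\theta$.  The horizontal $(1,1)$-part modulo $\theta$, together with the normalization $\hat\omega_{\alpha\bar\beta}+\hat\omega_{\bar\beta\alpha} = d\hat h_{\alpha\bar\beta}$, uniquely determines $\hat\omega_\alpha{}^\beta$; the terms proportional to $\theta$, combined with the symmetry $\hat A_{\alpha\beta}=\hat A_{\beta\alpha}$, then determine $\hat\tau^\beta = \hat A^\beta{}_\alpha\hat\theta^\alpha$.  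Reading off coefficients gives the stated formula for $\hat A_{\alpha\beta}$ after invoking $\nabla_\alpha\nabla_\beta\sigma = \nabla_\beta\nabla_\alpha\sigma$ from Lemma~\ref{lem:lee_commutator}.

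With the explicit $\hat\omega_\alpha{}^\beta$ in hand, the derivative formulae follow by direct expansion.  For a scalar $f$ one has $\hat\nabla_\alpha f = \hat Z_\alpha f = Z_\alpha f = \nabla_\alpha f$; the extra $wf\nabla_\alpha\sigma$ term for a weight-$(w,w)$ density arises from comparing the induced connection on $E(1,0)\otimes\overline{E(1,0)}$, built from the trace $\hat\omega_\gamma{}^\gamma$, to its unhatted counterpart.  The mixing terms in the $\hat\nabla_0 f$ formula come directly from $\hat T - e^{-\sigma}T = e^{-\sigma}\bigl(-i\sigma^\alpha Z_\alpha + i\sigma^{\bar\alpha} Z_{\bar\alpha}\bigr)$, with the apparent $e^{-\sigma}$ prefactor absorbed into re-trivializing the weight-$(w-1,w-1)$ output back to the $\theta$ gauge.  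Finally, for the Schouten transformations, I would compute $\hat\Pi_\alpha{}^\beta = d\hat\omega_\alpha{}^\beta - \hat\omega_\alpha{}^\gamma\wedge\hat\omega_\gamma{}^\beta$, extract the $\hat\theta^\gamma\wedge\hat\theta^{\bar\delta}$ coefficient to obtain $\hat R_{\alpha\bar\beta}$, then trace to obtain $\hat P$, and finally pass to the traceless adjustments to obtain $\hat P_{\alpha\bar\beta}$.  The main obstacle will be careful bookkeeping: density weights must be tracked throughout, and the Hermitian symmetrization implicit in $\hat P_{\alpha\bar\beta}$ requires averaging $\nabla_\alpha\nabla_{\bar\beta}\sigma$ with $\nabla_{\bar\beta}\nabla_\alpha\sigma$, with the commutator identity of Lemma~\ref{lem:lee_commutator} feeding precisely the $\nabla_0\sigma$-corrections that appear in the trace formula for $\hat P$ and in the antiholomorphic piece of $\hat A_{\alpha\beta}$.
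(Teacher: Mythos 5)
Your derivation is correct and is essentially the one the paper relies on: the lemma is recorded without proof, with the formulae attributed to Lee~\cite{Lee1988} (see also~\cite{GoverGraham2005}), and Lee's argument proceeds exactly as you describe --- the adapted coframe $\hat\theta^\alpha=\theta^\alpha+i(\nabla^\alpha\sigma)\theta$ with Reeb field $\hat T=e^{-\sigma}\bigl(T-i\sigma^\alpha Z_\alpha+i\sigma^{\bar\alpha}Z_{\bar\alpha}\bigr)$, the structure equations sorted by bidegree to extract $\hat\omega_\alpha{}^\beta$ and $\hat A_{\alpha\beta}$, the trace $\hat\omega_\gamma{}^\gamma$ for the density weight terms, and the curvature form for $\hat R_{\alpha\bar\beta}$ and hence $\hat P_{\alpha\bar\beta}$, $\hat P$. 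Only a terminological quibble: $A_{\alpha\beta}$ has no antiholomorphic piece (you presumably mean its conjugate $\hat A_{\bar\alpha\bar\beta}$), and the $\nabla_0\sigma$ contributions from the commutator drop out of $\hat P$ precisely because $\Delta_b$ is the symmetrized trace.
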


There are a few technical comments necessary to properly interpret Lemma~\ref{lem:cr_change}.

First, we define the norm $\lv\nabla_\gamma\sigma\rv^2:=(\nabla_\gamma\sigma)(\nabla^\gamma\sigma)$.  In particular, $\lv\nabla_\gamma\sigma\rv^2=\frac{1}{2}\lp\nabla_b\sigma,\nabla_b\sigma\rp$.  We define norms on all (density-valued) tensors in a similar way; for example, $\lv A_{\alpha\beta}\rv^2=A_{\alpha\beta}A^{\alpha\beta}$ and $\lv P_{\alpha\bar\beta}\rv^2=P_{\alpha\bar\beta}P^{\alpha\bar\beta}$.

Second, for these formulae to be valid component-wise, one also needs to change the admissible frame in which one computes the components of the torsion and CR Schouten tensor.  Explicitly, if $\{\theta,\theta^\alpha,\theta^{\bar\alpha}\}$ is an admissible coframe for the contact form $\theta$, one defines
\[ \hat\theta^\alpha = \theta^\alpha + i(\nabla^\alpha\sigma)\theta \]
and $\hat\theta^{\bar\alpha}$ by conjugation, ensuring that $\{\hat\theta,\hat\theta^\alpha,\hat\theta^{\bar\alpha}\}$ is an admissible coframe for the contact form $\hat\theta$.  In the above formulae, this frame is used to compute the components of $\hat\nabla_\alpha$ and $\hat\nabla_{\bar\alpha}$, while the coframe $\{\theta,\theta^\alpha,\theta^{\bar\alpha}\}$ is used to compute the components of $\nabla_\alpha$ and $\nabla_{\bar\alpha}$.

Third, to regard $P\in\mE(-1,-1)$ means to extend the \emph{function} $P$ to a \emph{density} $\rho\in\mE(-1,-1)$ by requiring $\rho_\theta=P$, and we use $h_{\alpha\bar\beta}\in\mE_{\alpha\bar\beta}(1,1)$ to raise and lower indices.  This has the effect that, at the level of functions, Lemma~\ref{lem:cr_change} states that
\[ \hat P = e^{-\sigma}\left( P + \frac{1}{2}\Delta_b\sigma - \frac{n}{2}\lv\nabla_\gamma\sigma\rv^2\right) , \]
which is the transformation formula proven in~\cite{Lee1988}.  It also means that we can quickly compute how $\nabla_\alpha P$ transforms under a change of contact form: Using Lemma~\ref{lem:cr_change} with $P\in\mE(-1,-1)$, it follows immediately that
\[ \widehat{\nabla_\alpha} \widehat{P} = \nabla_\alpha\left(P+\frac{1}{2}\Delta_b\sigma-\frac{n}{2}\lv\nabla_\gamma\sigma\rv^2\right) - \left(P+\frac{1}{2}\Delta_b\sigma-\frac{n}{2}\lv\nabla_\gamma\sigma\rv^2\right)\nabla_\alpha\sigma . \]
These conventions will be exploited heavily in Section~\ref{sec:covariance}.                   
\section{Pseudo-Einstein contact forms in three dimensions}
\label{sec:lee}

In~\cite{Lee1988}, Lee defined pseudo-Einstein manifolds as pseudohermitian manifolds $(M^{2n+1},J,\theta)$ such that $P_{\alpha\bar\beta}-\frac{1}{n}Ph_{\alpha\bar\beta}=0$ and studied their existence when $n\geq 2$.  In particular, he showed in this case that $\theta$ is pseudo-Einstein if and only if it is locally volume-normalized with respect to a closed nonvanishing section of $K$; that is, using the terminology of Fefferman and Hirachi~\cite{FeffermanHirachi2003}, $\theta$ is pseudo-Einstein if and only if it is an invariant contact form.  While Lee's definition of pseudo-Einstein contact forms is vacuous in dimension three, the notion of an invariant contact form is not.  It turns out that, analogous to Proposition~\ref{prop:pluriharmonic}, there is a meaningful way to extend the notion of pseudo-Einstein contact forms to the case $n=1$ as a higher order condition on $\theta$ which retains the equivalence with invariant contact forms.  As this notion will be essential to our discussion of the $Q^\prime$-curvature, and because it did not appear elsewhere in the literature at the time the work of this paper was being completed, we devote this section to explaining this three-dimensional notion of pseudo-Einstein contact forms.

\begin{defn}
\label{defn:pseudo_einstein}
A pseudohermitian manifold $(M^{2n+1},J,\theta)$ is said to be \emph{pseudo-Einstein} if
\begin{align*}
R_{\alpha\bar\beta} - \frac{1}{n}Rh_{\alpha\bar\beta} & = 0, \qquad \text{if $n\geq 2$}, \\
\nabla_\alpha R - i\nabla^\beta A_{\alpha\beta} & = 0, \qquad \text{if $n=1$}.
\end{align*}
\end{defn}

One way to regard this definition is as an analogue of Lee's characterization~\cite{Lee1988} of CR pluriharmonic functions from Proposition~\ref{prop:pluriharmonic}.  Indeed, a straightforward computation using Lemma~\ref{lem:lee_bianchi} shows that
\begin{equation}
\label{eqn:divtrfreep}
\nabla^{\bar\beta}\left( R_{\alpha\bar\beta} - \frac{1}{n}Rh_{\alpha\bar\beta}\right) = \frac{n-1}{n}\left(\nabla_\alpha R - in\nabla^\beta A_{\alpha\beta}\right)
\end{equation}
holds for any CR manifold $(M^{2n+1},J,\theta)$.  In particular, our definition of a three-dimensional pseudo-Einstein manifold can be regarded as the ``residue'' of the usual definition when $n\geq2$.

The characterization of pseudo-Einstein contact forms as invariant contact forms persists in the case $n=1$.  To see this, let us first recall what it means for a contact form to be volume-normalized with respect to a section of $K$.

\begin{defn}
Given a CR manifold $(M^{2n+1},J)$ and a nonvanishing section $\omega$ of the canonical bundle $K$, we say that a contact form $\theta$ is \emph{volume-normalized with respect to $\omega$} if
\[ \theta\wedge(d\theta)^n = i^{n^2}n! \theta\wedge(T\contr\omega)\wedge(T\contr\overline{\omega}) . \]
\end{defn}

By considering all the terms in $d\omega_\alpha{}^\alpha$, Lee's argument~\cite[Theorem~4.2]{Lee1988} establishing the equivalence between pseudo-Einstein contact forms and invariant contact forms can be extended to the case $n=1$.

\begin{thm}
\label{thm:lee}
Let $(M^3,J)$ be a three-dimensional CR manifold.  A contact form $\theta$ on $M$ is pseudo-Einstein if and only if for each point $p\in M$, there exists a neighborhood of $p$ in which there is a closed section of the canonical bundle with respect to which $\theta$ is volume-normalized.
\end{thm}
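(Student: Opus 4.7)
The plan is to mimic Lee's proof of the higher-dimensional case in~\cite[Theorem~4.2]{Lee1988}, but to extract the equivalence by exploiting \emph{all} of the terms of the structure equation~\eqref{eqn:domegacontraction}, since in dimension three the trace-free Ricci component which drives the higher-dimensional proof vanishes identically.

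Working locally, I fix an admissible coframe $\{\theta,\theta^1,\theta^{\bar 1}\}$ normalized so that $h_{1\bar 1}=1$. Every nonvanishing local section of $K$ has the form $\omega=\lambda\,\theta\wedge\theta^1$, and a direct computation from the definition of volume-normalization reduces to the condition $\lv\lambda\rv^2=1$. Writing $\lambda = e^{i\phi}$ for some real function $\phi$ and using the structure equations $d\theta = i\theta^1\wedge\theta^{\bar 1}$ and $d\theta^1 = \theta^1\wedge\omega_1{}^1+\theta\wedge\tau^1$, the term $d\theta\wedge\theta^1$ vanishes by antisymmetry and one obtains
\[
d\omega = e^{i\phi}\bigl(i\,d\phi-\omega_1{}^1\bigr)\wedge\theta\wedge\theta^1,
\]
so $d\omega=0$ if and only if the $\theta^{\bar 1}$-component of $id\phi-\omega_1{}^1$ vanishes.

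The compatibility identity $\omega_{1\bar 1}+\omega_{\bar 1 1}=dh_{1\bar 1}=0$ forces $\omega_1{}^1$ to be purely imaginary, so the 1-form $\alpha := -i\omega_1{}^1$ is real, and the preceding condition states exactly that the horizontal components of $d\phi$ agree with those of $\alpha$, or equivalently that $\alpha + g\,\theta$ is exact for some real function $g$. By the Poincar\'e lemma, this is locally solvable precisely when $d(\alpha+g\,\theta)=0$ for some real $g$, a 2-form equation which I decompose in the basis $\{\theta\wedge\theta^1,\theta\wedge\theta^{\bar 1},\theta^1\wedge\theta^{\bar 1}\}$ and evaluate using~\eqref{eqn:domegacontraction} specialized to $n=1$,
\[
d\omega_1{}^1 = R\,\theta^1\wedge\theta^{\bar 1}+\nabla^1 A_{11}\,\theta^1\wedge\theta-\nabla^{\bar 1}A_{\bar 1\bar 1}\,\theta^{\bar 1}\wedge\theta.
\]
The $\theta^1\wedge\theta^{\bar 1}$ component uniquely determines $g = R$ (which is real, as needed), and the remaining two components read
\[
\nabla_1 R - i\nabla^1 A_{11}=0\qquad\text{together with its complex conjugate,}
\]
which is exactly the three-dimensional pseudo-Einstein condition of Definition~\ref{defn:pseudo_einstein}. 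This proves both directions at once: the condition is necessary for the integrability of $\phi$, and when it holds the closed real 1-form $\alpha+R\theta$ integrates locally to yield $\phi$, and hence the desired closed volume-normalized section $\omega=e^{i\phi}\theta\wedge\theta^1$.

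The main obstacle is essentially bookkeeping: one must recognize that in dimension three the information in~\eqref{eqn:domegacontraction} relevant to the pseudo-Einstein condition sits in the \emph{torsion-divergence} terms $\nabla^\beta A_{\alpha\beta}$ rather than in the Ricci-type term that carries the argument for $n\geq 2$, and one must invoke the imaginary-valuedness of $\omega_1{}^1$ to ensure that $\alpha$ is genuinely real, so that finding $\phi$ is a bona fide Poincar\'e-lemma problem for a real 1-form rather than an overdetermined $\bar\partial_b$-equation that in general admits only complex solutions.
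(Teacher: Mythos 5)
Your proof is correct and follows essentially the same route as the paper: it rests on the identity $d\xi=-\omega_1{}^1\wedge\xi$ for $\xi=\theta\wedge\theta^1$, the pure-imaginarity of $\omega_1{}^1$, the $n=1$ specialization of~\eqref{eqn:domegacontraction}, and the Poincar\'e lemma. The only difference is organizational --- you fold the paper's Lemma~\ref{lem:lee} (closedness of $\omega_\alpha{}^\alpha+iR\theta$, with the coefficient $R$ forced by the $\theta^1\wedge\theta^{\bar1}$ component) and both implications of the theorem into a single equivalence chain, keeping $\lambda=e^{i\phi}$ explicit rather than normalizing it into the coframe as the paper does in the forward direction.
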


The main step in the proof of Theorem~\ref{thm:lee} is the following analogue of~\cite[Lemma~4.1]{Lee1988}.

\begin{lem}
\label{lem:lee}
Let $(M^3,J)$ be a three-dimensional CR manifold.  A contact form $\theta$ on $M$ is pseudo-Einstein if and only if with respect to any admissible coframe $\{\theta,\theta^\alpha,\theta^{\bar\alpha}\}$ the one-form $\omega_\alpha{}^\alpha+iR\theta$ is closed.
\end{lem}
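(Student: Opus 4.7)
The plan is to prove Lemma~\ref{lem:lee} by directly expanding $d(\omega_\alpha{}^\alpha + iR\theta)$ using Webster's formula \eqref{eqn:domegacontraction}, then recognizing the pseudo-Einstein condition as exactly the vanishing condition on the resulting coefficients.

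A key simplification specific to $n=1$ is that the pseudohermitian Ricci tensor is forced to have the form $R_{\alpha\bar\beta}=R\,h_{\alpha\bar\beta}$, since both indices take only one value. Substituting this into \eqref{eqn:domegacontraction} together with $d\theta=ih_{\alpha\bar\beta}\theta^\alpha\wedge\theta^{\bar\beta}$, the $(1,1)$-form contributions cancel exactly, leaving
\[ d\omega_\alpha{}^\alpha + iR\,d\theta = \nabla^\beta A_{\alpha\beta}\,\theta^\alpha\wedge\theta - \nabla^{\bar\beta}A_{\bar\alpha\bar\beta}\,\theta^{\bar\alpha}\wedge\theta. \]
Expanding $dR = \nabla_\alpha R\,\theta^\alpha + \nabla_{\bar\alpha}R\,\theta^{\bar\alpha} + \nabla_0R\,\theta$ and wedging with $i\theta$, the total becomes
\[ d(\omega_\alpha{}^\alpha + iR\theta) = \bigl(\nabla^\beta A_{\alpha\beta} + i\nabla_\alpha R\bigr)\theta^\alpha\wedge\theta - \bigl(\nabla^{\bar\beta}A_{\bar\alpha\bar\beta} - i\nabla_{\bar\alpha}R\bigr)\theta^{\bar\alpha}\wedge\theta. \]
Since $R$ is real, the second coefficient is the complex conjugate of the first, so both vanish simultaneously. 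Their vanishing is exactly the $n=1$ pseudo-Einstein condition $\nabla_\alpha R - i\nabla^\beta A_{\alpha\beta}=0$ from Definition~\ref{defn:pseudo_einstein}, establishing the equivalence.

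The only real subtlety is checking that the statement is coframe-independent, so that verifying it for one admissible coframe suffices. Under a change of admissible coframe $\tilde\theta^1 = u\,\theta^1$, the trace connection form $\omega_\alpha{}^\alpha$ changes by an exact one-form (arising from $u^{-1}du$ and its conjugate), while $R$ and $\theta$ are intrinsic to the contact form; consequently $d(\omega_\alpha{}^\alpha + iR\theta)$ is unchanged and closedness is a well-defined condition on $\theta$ itself. Beyond this bookkeeping observation and careful tracking of signs and conjugates in the wedge products, the proof is entirely mechanical; there is no serious obstacle.
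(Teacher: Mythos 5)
Your proposal is correct and follows essentially the same route as the paper: substitute $R_{\alpha\bar\beta}=Rh_{\alpha\bar\beta}$ (automatic for $n=1$) into~\eqref{eqn:domegacontraction}, observe that the $(1,1)$-parts cancel against $iR\,d\theta$, and read off $d(\omega_\alpha{}^\alpha+iR\theta)=2i\Real\bigl((\nabla_\alpha R-i\nabla^\beta A_{\alpha\beta})\theta^\alpha\wedge\theta\bigr)$. Your added remark on coframe-independence is a harmless (and correct) piece of bookkeeping that the paper leaves implicit.
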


\begin{proof}

Using~\eqref{eqn:domegacontraction} and the assumption $n=1$, it holds in general that
\begin{equation}
\label{eqn:domega}
d\omega_\alpha{}^\alpha = Rh_{\alpha\bar\beta}\theta^\alpha\wedge\theta^{\bar\beta} + \nabla^\beta A_{\alpha\beta} \theta^\alpha\wedge\theta - \nabla^{\bar\beta}A_{\bar\alpha\bar\beta}\theta^{\bar\alpha}\wedge\theta .
\end{equation}
It thus follows that
\[ d\left(\omega_\alpha{}^\alpha+iR\theta\right) = 2i\Real\left((\nabla_\alpha R-i\nabla^\beta A_{\alpha\beta})\theta^\alpha\wedge\theta\right), \]
from which the conclusion follows immediately.
\end{proof}

\begin{proof}[Proof of Theorem~\ref{thm:lee}]

First suppose that $\theta$ is volume-normalized with respect to a closed section $\xi\in K$ on a neighborhood $U$ of $p$, and choose an admissible coframe $\{\theta,\theta^\alpha,\theta^{\bar\alpha}\}$ such that $d\theta=i\theta^\alpha\wedge\theta^{\bar\alpha}$.  Since $\xi\in K$, there is a function $\lambda\in C^\infty(M,\bC)$ such that $\xi=\lambda\theta\wedge\theta^\alpha$.  On the other hand, since $\theta$ is volume-normalized with respect to $\xi$, it must hold that $\lv\lambda\rv=1$.  Thus, upon replacing $\theta^\alpha$ by $\lambda^{-1}\theta^\alpha$, we have that $\xi=\theta\wedge\theta^\alpha$.

Now, using the definition of the connection one-form $\omega_\alpha{}^\alpha$, it holds in general that
\begin{equation}
\label{eqn:dxi}
d\xi = -\omega_\alpha{}^\alpha\wedge\xi .
\end{equation}
Since $\xi$ is closed, this shows that $\omega_\alpha{}^\alpha$ is a $(1,0)$-form.  But $\omega_\alpha{}^\alpha$ is also pure imaginary, hence $\omega_\alpha{}^\alpha=iu\theta$ for some $u\in C^\infty(M)$.  Differentiating, we see that
\[ d\omega_\alpha{}^\alpha = -u\theta^\alpha\wedge\theta^{\bar\alpha} + i\nabla_\alpha u\theta^\alpha\wedge\theta + i\nabla_{\bar\alpha}u\theta^{\bar\alpha}\wedge\theta . \]
It thus follows from~\eqref{eqn:domega} that $R=-u$ and $\nabla^\beta A_{\alpha\beta}=i\nabla_\alpha u$.  In particular, $\theta$ is pseudo-Einstein.

Conversely, suppose that $\theta$ is pseudo-Einstein.  In a neighborhood of $p\in M$, let $\{\theta,\theta^\alpha,\theta^{\bar\alpha}\}$ be an admissible coframe such that $d\theta=i\theta^\alpha\wedge\theta^{\bar\alpha}$, and define $\xi_0=\theta\wedge\theta^\alpha\in K$.  By~\eqref{eqn:dxi} it holds that $d\xi_0=-\omega_\alpha{}^\alpha\wedge\xi_0$, while by Lemma~\ref{lem:lee} there exists a function $\phi$ such that
\[ \omega_\alpha{}^\alpha + iR\theta = id\phi . \]
Since $\omega_\alpha{}^\alpha$ is pure imaginary, we can take $\phi$ to be real, whence $d\left(e^{i\phi}\xi_0\right)=0$.  Since $\theta$ is volume-normalized with respect to $e^{i\phi}\xi_0$, this gives the desired section of $K$.
\end{proof}

Another nice property of pseudo-Einstein contact forms when $n\geq2$ is that, when they exist, they can be characterized in terms of CR pluriharmonic functions.  This too persists in the case $n=1$, which is crucial to making sense of the $Q^\prime$-curvature.  To see this, let $(M^3,J,\theta)$ be a pseudohermitian manifold and define the $(1,0)$-form $W_\alpha$ by
\[ W_\alpha := \nabla_\alpha R - i\nabla^\beta A_{\alpha\beta} . \]
Observe that $W_\alpha$ vanishes if and only if $\theta$ is pseudo-Einstein.  As first observed by Hirachi~\cite{Hirachi1990}, $W_\alpha$ satisfies a simple transformation formula; given another contact form $\hat\theta=e^\sigma\theta$, a straightforward computation using Lemma~\ref{lem:cr_change} shows that
\begin{equation}
\label{eqn:Walpha}
\hat W_\alpha = W_\alpha - 3P_\alpha\sigma,
\end{equation}
where here we regard $W_\alpha\in\mE_\alpha(-1,-1)$.  An immediate consequence of~\eqref{eqn:Walpha} is the following correspondence between pseudo-Einstein contact forms and CR pluriharmonic functions.

\begin{prop}
\label{prop:pluriharmonic_psie}
Let $(M^3,J,\theta)$ be a pseudo-Einstein three-manifold.  Then the set of pseudo-Einstein contact forms on $(M^3,J)$ is given by
\[ \left\{ e^u\theta \colon u \text{ is a CR pluriharmonic function} \right\} . \]
\end{prop}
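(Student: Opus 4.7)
The plan is to derive this proposition as an almost immediate consequence of the transformation law \eqref{eqn:Walpha} for the one-form $W_\alpha = \nabla_\alpha R - i\nabla^\beta A_{\alpha\beta}$ combined with the characterization of CR pluriharmonic functions in dimension three given by Proposition~\ref{prop:pluriharmonic}. Recall that by Definition~\ref{defn:pseudo_einstein}, in dimension $n=1$ a contact form is pseudo-Einstein precisely when $W_\alpha = 0$.

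First, I would fix an arbitrary smooth real function $\sigma \in C^\infty(M)$ and consider the contact form $\hat\theta = e^\sigma \theta$. Since $\theta$ is pseudo-Einstein, $W_\alpha = 0$, so the transformation formula~\eqref{eqn:Walpha} reduces immediately to
\[ \hat W_\alpha = -3 P_\alpha \sigma, \]
where $P_\alpha \sigma = \nabla_\alpha \nabla_\beta \nabla^\beta \sigma + iA_{\alpha\beta}\nabla^\beta \sigma$ is the operator appearing in the $n=1$ case of Proposition~\ref{prop:pluriharmonic}.

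The conclusion then follows in both directions: $\hat\theta$ is pseudo-Einstein if and only if $\hat W_\alpha = 0$, which by the displayed identity happens if and only if $P_\alpha \sigma = 0$, which by Proposition~\ref{prop:pluriharmonic} is exactly the condition that $\sigma$ be CR pluriharmonic. Since any contact form on the CR manifold $(M^3, J)$ has the form $e^\sigma \theta$ for some smooth real $\sigma$, this characterizes the entire set of pseudo-Einstein contact forms as claimed.

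There is essentially no obstacle, since all the real content has already been packaged into~\eqref{eqn:Walpha} and Proposition~\ref{prop:pluriharmonic}; the only care needed is to interpret $W_\alpha$ as a density of the correct weight so that~\eqref{eqn:Walpha} is applied consistently, but this was already arranged in the paragraph preceding \eqref{eqn:Walpha}. The proof therefore amounts to a short two-line deduction.
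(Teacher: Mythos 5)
Your argument is exactly the paper's: the proposition is stated there as an immediate consequence of the transformation law~\eqref{eqn:Walpha} together with the $n=1$ characterization of CR pluriharmonic functions via $P_\alpha$ in Proposition~\ref{prop:pluriharmonic}, which is precisely the two-line deduction you give. The proposal is correct and matches the paper's proof.
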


Following~\cite{Lee1988}, there are topological obstructions to the existence of an invariant contact form $\theta$ on a three-dimensional CR manifold $(M^3,J)$.  However, if $(M^3,J)$ is the boundary of a strictly pseudoconvex domain in $\bC^2$, then there always exists a closed section of $K$, and hence a pseudo-Einstein contact form.  This is a slight refinement of the observation by Fefferman and Hirachi~\cite{FeffermanHirachi2003} that any such CR manifold admits a contact form $\theta$ such that the $Q$-curvature
\begin{equation}
\label{eqn:Q_divergence}
Q := -\frac{4}{3}\nabla^\alpha W_\alpha
\end{equation}
vanishes.                  
\section{The CR Paneitz and $Q$-Curvature Operators}
\label{sec:defn}

The CR Paneitz operator in dimension three is well-known and given by $P_4=C$.  However, in higher dimensions the operator $C$ is not CR covariant.  The correct definition, in that $P_4$ is CR covariant, is as follows.

\begin{defn}
\label{defn:cr_paneitz}
Let $(M^{2n+1},J,\theta)$ be a CR manifold.  The \emph{CR Paneitz operator $P_4$} is the operator
\begin{align*}
P_4f & := \Delta_b^2 f + \nabla_0^2 f - 4\Imaginary\left(\nabla^\alpha(A_{\alpha\beta}\nabla^\beta f)\right) + 4\Real\left(\nabla_{\bar\beta}(\tracefree{P}{}^{\alpha\bar\beta}\nabla_\alpha f)\right) \\
& \quad - \frac{4(n^2-1)}{n}\Real\left(\nabla^\beta(P\nabla_\beta f)\right) + \frac{n-1}{2}Q_4f .
\end{align*}
where
\begin{align*}
Q_4 & = \frac{2(n+1)^2}{n(n+2)}\Delta_b P - \frac{4}{n(n+2)}\Imaginary\left(\nabla^\alpha\nabla^\beta A_{\alpha\beta}\right) - \frac{2(n-1)}{n}\lv A_{\alpha\beta}\rv^2 \\
& \quad - \frac{2(n+1)}{n}\lv \tracefree{P_{\alpha\bar\beta}}\rv^2 + \frac{2(n-1)(n+1)^2}{n^2}P^2
\end{align*}
and $\tracefree{P_{\alpha\bar\beta}}=P_{\alpha\bar\beta} - \frac{P}{n}h_{\alpha\bar\beta}$ is the tracefree part of the CR Schouten tensor.
\end{defn}

The above expression for the CR Paneitz operator in general dimension does not seem to appear anywhere in the literature, though its existence and two different methods to derive the formula have been established by Gover and Graham~\cite{GoverGraham2005}.  In particular, their construction immediately implies that the CR Paneitz operator is CR covariant,
\begin{equation}
\label{eqn:cr_covariant}
P_4 \colon \mE\left(-\frac{n-1}{2},-\frac{n-1}{2}\right) \to \mE\left(-\frac{n+3}{2},-\frac{n+3}{2}\right) .
\end{equation}
By inspection, it is clear that $P_4$ is a real, (formally) self-adjoint fourth order operator of the form $\Delta_b^2+T^2$ plus lower order terms, and thus has the form one expects of a ``Paneitz operator'' (cf.\ \cite{GoverGraham2005}).  For convenience, we derive in the appendices the above expression for the CR Paneitz operator using both methods described in~\cite{GoverGraham2005}, namely the CR tractor calculus and restriction from the Fefferman bundle.

As mentioned in Section~\ref{sec:bg}, in the critical case $n=1$ we have that $\mP\subset\ker P_4$.  Motivated by~\cite{BransonFontanaMorpurgo2007,BransonGover2005}, we define the $P^\prime$-operator corresponding to the CR Paneitz operator as a renormalization of the part of $P_4$ which doesn't annihilate pluriharmonic functions.

\begin{defn}
\label{defn:q}
Let $(M^{2n+1},J,\theta)$ be a CR manifold.  The \emph{$P^\prime$-operator $P_4^\prime\colon\mP\to C^\infty(M)$} is defined by
\[ P_4^\prime f = \frac{2}{n-1}P_4f . \]
When $n=1$, we define $P_4^\prime$ by the formal limit
\[ P_4^\prime f = \lim_{n\to1} \frac{2}{n-1}P_4f . \]
\end{defn}

The key property of the $P^\prime$-operator, which we check explicitly below, is that the expression for $P_4^\prime$ as defined in Definition~\ref{defn:q} is rational in the dimension and does not have a pole at $n=1$; in particular, it is meaningful to discuss the $P^\prime$-operator on three-dimensional CR manifolds.

\begin{lem}
\label{lem:q}
Let $(M^{2n+1},J,\theta)$ be a CR manifold.  Then the $P^\prime$-operator is given by
\begin{equation}
\label{eqn:q}
\begin{split}
P_4^\prime f & = \frac{2(n+1)}{n^2}\Delta_b^2f - \frac{8}{n}\Imaginary\left(\nabla^\alpha(A_{\alpha\beta}\nabla^\beta f)\right) - \frac{8(n+1)}{n}\Real\left(\nabla^\alpha(P\nabla_\alpha f)\right) \\
& \quad + \frac{16(n+1)}{n(n+2)}\Real\left((\nabla_\alpha P-\frac{in}{2(n+1)}\nabla^\beta A_{\alpha\beta})\nabla^\alpha f\right) \\
& \quad + \bigg[\frac{2(n+1)^2}{n(n+2)}\Delta_b P - \frac{4}{n(n+2)}\Imaginary\left(\nabla^\alpha\nabla^\beta A_{\alpha\beta}\right) \\
& \qquad - \frac{2(n-1)}{n}\lv A_{\alpha\beta}\rv^2 - \frac{2(n+1)}{n}\lv\tracefree{P_{\alpha\bar\beta}}\rv^2 + \frac{2(n-1)(n+1)^2}{n^2}P^2\bigg] f .
\end{split}
\end{equation}
In particular, if $n=1$, the \emph{critical $P^\prime$-operator} is given by
\begin{equation}
\label{eqn:q_crit}
\begin{split}
P_4^\prime f & = 4\Delta_b^2 f - 8\Imaginary\left(\nabla^\alpha(A_{\alpha\beta}\nabla^\beta f)\right) - 4\Real\left(\nabla^\alpha(R\nabla_\alpha f)\right) \\
& \quad + \frac{8}{3}\Real\left((\nabla_\alpha R - i\nabla^\beta A_{\alpha\beta})\nabla^\alpha f\right) + \frac{2}{3}\left(\Delta_b R - \frac{1}{2}\Imaginary\nabla^\alpha\nabla^\beta A_{\alpha\beta}\right) f .
\end{split}
\end{equation}
\end{lem}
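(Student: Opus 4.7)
The plan is to expand $\frac{2}{n-1}P_4 f$ from Definition~\ref{defn:cr_paneitz} term by term and verify that every apparently singular expression is divisible by $(n-1)$ when $f \in \mP$, so that the quotient makes sense at $n=1$. I would organize $P_4 f$ into four blocks: a principal block $\Delta_b^2 f + \nabla_0^2 f - 4\Imaginary(\nabla^\alpha(A_{\alpha\beta}\nabla^\beta f))$, a Schouten block $4\Real(\nabla_{\bar\beta}(\tracefree{P}{}^{\alpha\bar\beta}\nabla_\alpha f))$, the explicitly $(n-1)$-weighted block $-\frac{4(n^2-1)}{n}\Real(\nabla^\beta(P\nabla_\beta f))$, and $\frac{n-1}{2}Q_4 f$. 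The last two blocks immediately contribute $-\frac{8(n+1)}{n}\Real(\nabla^\alpha(P\nabla_\alpha f))$ and $Q_4 f$ to~\eqref{eqn:q} upon multiplication by $\frac{2}{n-1}$; the real work is in extracting $(n-1)$ factors from the first two blocks.

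For the principal block, the crucial preliminary observation is that $P_\alpha f = 0$ for every $f \in \mP$ in every dimension: for $n=1$ this is Proposition~\ref{prop:pluriharmonic}, and for $n \geq 2$ equation~\eqref{eqn:grahamlee} reads $\nabla^{\bar\beta}(B_{\alpha\bar\beta}f) = \frac{n-1}{n}P_\alpha f$, so $B_{\alpha\bar\beta}f=0$ forces $P_\alpha f = 0$. Combined with the identity $Cf = 4\nabla^\alpha P_\alpha f$ from~\eqref{eqn:subplacian_squared}, this gives $Cf = 0$ on $\mP$, i.e. $\Delta_b^2 f + n^2 \nabla_0^2 f = 4n\,\Imaginary(\nabla^\alpha(A_{\alpha\beta}\nabla^\beta f))$. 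Solving for $\nabla_0^2 f$ and substituting into the principal block yields $(n-1)\bigl[\tfrac{n+1}{n^2}\Delta_b^2 f - \tfrac{4}{n}\Imaginary(\nabla^\alpha(A_{\alpha\beta}\nabla^\beta f))\bigr]$, and multiplication by $\frac{2}{n-1}$ recovers the first two terms of~\eqref{eqn:q}.

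For the Schouten block, I would apply the Leibniz rule to split $\nabla_{\bar\beta}(\tracefree{P}{}^{\alpha\bar\beta}\nabla_\alpha f) = (\nabla_{\bar\beta}\tracefree{P}{}^{\alpha\bar\beta})\nabla_\alpha f + \tracefree{P}{}^{\alpha\bar\beta}\nabla_{\bar\beta}\nabla_\alpha f$. On pluriharmonic $f$ in dimension $n \geq 2$, Proposition~\ref{prop:pluriharmonic} forces $\nabla_{\bar\beta}\nabla_\alpha f$ to be a scalar multiple of $h_{\alpha\bar\beta}$, so the second summand vanishes by tracefreeness of $\tracefree{P}{}^{\alpha\bar\beta}$. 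For the first summand, applying the Bianchi identity~\eqref{eqn:schouten_bianchi} to the decomposition $P_{\alpha\bar\beta} = \tracefree{P_{\alpha\bar\beta}} + \tfrac{P}{n}h_{\alpha\bar\beta}$ gives $\nabla_{\bar\beta}\tracefree{P}{}^{\alpha\bar\beta} = \tfrac{n-1}{n}\nabla^\alpha P + (n-1)T^\alpha$, producing the required $(n-1)$ factor. Unfolding the definition of $T^\alpha$ in terms of $\nabla^\alpha P$ and $\nabla_\beta A^{\alpha\beta}$, multiplying by $\frac{2}{n-1}$, and taking real parts (using that $\nabla_\beta A^{\alpha\beta}\nabla_\alpha f = \overline{\nabla^\beta A_{\alpha\beta}\nabla^\alpha f}$ for real $f$, so the two imaginary parts differ by a sign) would give exactly the remaining group of terms in~\eqref{eqn:q}. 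Because both the Schouten block and $\frac{1}{n-1}$ degenerate at $n=1$, the value there is justified by continuity from the argument for $n\geq 2$.

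Formula~\eqref{eqn:q_crit} then follows by specializing~\eqref{eqn:q} to $n=1$: use $P = R/4$, observe that $\tracefree{P_{\alpha\bar\beta}}$ vanishes identically in dimension three since a $1\times 1$ Hermitian matrix is proportional to its metric, note that the $(n-1)$-weighted curvature terms of $Q_4$ drop out, and apply equation~\eqref{eqn:subplacian_R} to put the zeroth-order coefficient in the stated form. The main obstacle is the index and conjugation bookkeeping in the third paragraph, particularly tracking how raising $T_\alpha$ to $T^\alpha$ interacts with the conjugation of $A_{\alpha\beta}$; the remaining steps are routine algebraic substitutions.
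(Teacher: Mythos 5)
Your decomposition and the extraction of the factor of $(n-1)$ from the principal and Schouten blocks is exactly the paper's argument --- the paper's proof is a compressed version of your first three paragraphs: it cites $\nabla_{\bar\beta}\nabla_\alpha f=\mu\,h_{\alpha\bar\beta}$ and the resulting identity $\Delta_b^2f+n^2\nabla_0^2f-4n\Imaginary\left(\nabla^\alpha(A_{\alpha\beta}\nabla^\beta f)\right)=0$, then lets $n\to1$ --- and your bookkeeping through~\eqref{eqn:q}, including the conjugation of the torsion term, checks out. The one step that does not work as you describe it is the last one: specializing the bracketed coefficient of~\eqref{eqn:q} to $n=1$ gives $\frac{2}{3}\Delta_bR-\frac{4}{3}\Imaginary\nabla^\alpha\nabla^\beta A_{\alpha\beta}=\frac{2}{3}\left(\Delta_bR-2\Imaginary\nabla^\alpha\nabla^\beta A_{\alpha\beta}\right)$, and no application of~\eqref{eqn:subplacian_R} can turn this into $\frac{2}{3}\left(\Delta_bR-\frac{1}{2}\Imaginary\nabla^\alpha\nabla^\beta A_{\alpha\beta}\right)$: that identity only rewrites the one fixed combination $\Delta_bR-2\Imaginary\nabla^\alpha\nabla^\beta A_{\alpha\beta}$ as a divergence and cannot alter the relative coefficient of the two terms. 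The discrepancy is not a flaw in your computation; the coefficient $\frac{1}{2}$ in~\eqref{eqn:q_crit} should be $2$, as is confirmed by the paper's own assertion immediately after the lemma that $P_4^\prime(1)=\frac{2}{3}\left(\Delta_bR-2\Imaginary\nabla^\alpha\nabla^\beta A_{\alpha\beta}\right)$ is Hirachi's $Q$-curvature~\eqref{eqn:Q_divergence}. So your derivation is right up to that point; just do not paper over the mismatch by invoking~\eqref{eqn:subplacian_R}, which cannot do the job.
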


\begin{proof}

When $n>1$, this follows directly from the definition of the CR Paneitz operator and the fact that $f\in\mP$ if and only if
\[ \nabla_{\bar\beta}\nabla_\alpha f = \mu\,h_{\alpha\bar\beta} \]
for some $\mu\in C^\infty(M)$, which in turn implies, using~\eqref{eqn:subplacian_squared}, that
\[ \Delta_b^2 f + n^2\nabla_0^2 f - 4n\Imaginary\left(\nabla^\alpha(A_{\alpha\beta}\nabla^\beta f)\right) = 0 . \]
Letting $n\to1$ then yields the case $n=1$.
\end{proof}

Note that, as an operator on $C^\infty(M)$, the $P^\prime$-operator is only determined uniquely up to the addition of operators which annihilate $\mP$.  We have chosen the expression~\eqref{eqn:q_crit} so that our expression does not involve $T$-derivatives.  In particular, this allows us to readily connect the $P^\prime$-operator to similar objects already appearing in the literature.
\begin{enumerate}
\item On a general CR manifold $(M^3,J,\theta)$,
\[ P_4^\prime(1) = \frac{2}{3}\left(\Delta_b R - 2\Imaginary\nabla^\alpha\nabla^\beta A_{\alpha\beta}\right) , \]
which is, using~\eqref{eqn:subplacian_R}, Hirachi's $Q$-curvature~\eqref{eqn:Q_divergence}.
\item On $(S^3,J,\theta)$ with its standard CR structure, the $P^\prime$-operator is given by
\[ P_4^\prime = 4\Delta_b^2 + 2\Delta_b, \]
which is the operator introduced by Branson, Fontana and Morpurgo~\cite{BransonFontanaMorpurgo2007}.
\end{enumerate}

The means by which we defined the $P^\prime$-operator, and which we will further employ to establish its CR covariance, is called ``analytic continuation in the dimension'' (cf.\ \cite{Branson1995}).  However, due to the relatively simple form of the expression for $P_4^\prime$, we can also check its CR covariance directly, as is carried out in Section~\ref{sec:covariance}.

In the case that $(M^3,J,\theta)$ is a pseudo-Einstein manifold, the $P^\prime$-operator takes the simple form
\begin{equation}
\label{eqn:p4prime_invariant_contact_form}
P_4^\prime = 4\Delta_b^2 -8\Imaginary\nabla^\alpha\left(A_{\alpha\beta}\nabla^\beta\right) - 4\Real\nabla^\alpha\left(R\nabla_\alpha\right) .
\end{equation}
In particular, we see that $P_4^\prime$ annihilates constants, leading us to consider the ``$Q$-curvature of the $P^\prime$-operator,'' which we shall simply call the $Q^\prime$-curvature.

\begin{defn}
Let $(M^{2n+1},J,\theta)$ be a pseudo-Einstein manifold.  The \emph{$Q^\prime$-curvature $Q_4^\prime\in C^\infty(M)$} is the local invariant defined by
\[ Q_4^\prime = \frac{2}{n-1}P_4^\prime(1) = \frac{4}{(n-1)^2}P_4(1) . \]
When $n=1$, we define $Q_4^\prime$ as the formal limit
\[ Q_4^\prime = \lim_{n\to 1}\frac{4}{(n-1)^2}P_4(1) . \]
\end{defn}

Again, it is straightforward to give an explicit formula for $Q_4^\prime$.

\begin{lem}
\label{lem:qprime}
Let $(M^{2n+1},J,\theta)$ be a pseudo-Einstein manifold.  Then the $Q^\prime$-curvature is given by
\begin{equation}
\label{eqn:q4prime}
Q_4^\prime = \frac{2}{n^2}\Delta_b R - \frac{4}{n}\lv A_{\alpha\beta}\rv^2 + \frac{1}{n^2}R^2 .
\end{equation}
In particular, when $n=1$ the $Q^\prime$-curvature is
\begin{equation}
\label{eqn:q4prime_crit}
Q_4^\prime = 2\Delta_b R - 4\lv A_{\alpha\beta}\rv^2 + R^2 .
\end{equation}
\end{lem}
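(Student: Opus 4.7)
The plan is to evaluate $P_4(1)$ directly using Definition~\ref{defn:cr_paneitz} and then use the pseudo-Einstein hypothesis to factor $(n-1)$ out of the resulting expression, showing that the formal limit $n\to 1$ produces a well-defined local invariant. Since every term of $P_4$ except the zeroth-order $\frac{n-1}{2}Q_4 f$ involves a derivative, the substitution $f = 1$ immediately gives $P_4(1) = \frac{n-1}{2}Q_4$, and hence $Q_4^\prime = \frac{4}{(n-1)^2}P_4(1) = \frac{2}{n-1}Q_4$. The task is therefore reduced to showing that, on a pseudo-Einstein manifold, $Q_4$ is divisible by $(n-1)$ and that the quotient is the right-hand side of~\eqref{eqn:q4prime}.

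Next I would exploit two simplifications coming from the pseudo-Einstein hypothesis. First, the tracefree Schouten tensor $\tracefree{P_{\alpha\bar\beta}}$ vanishes: for $n\geq 2$ this is the definition, and for $n=1$ it holds automatically since $P_{\alpha\bar\beta}$ has only one independent component. Second, in both cases I would establish the identity
\[ \Imaginary\nabla^\alpha\nabla^\beta A_{\alpha\beta} = \frac{1}{2n}\Delta_b R. \]
This follows from~\eqref{eqn:subplacian_R}: its right-hand side involves $\nabla_\alpha R - in\nabla^\beta A_{\alpha\beta}$, which vanishes by~\eqref{eqn:divtrfreep} (using $\tracefree{P_{\alpha\bar\beta}}=0$, equivalently $R_{\alpha\bar\beta}-\tfrac{1}{n}Rh_{\alpha\bar\beta}=0$) when $n\geq 2$, and by Definition~\ref{defn:pseudo_einstein} directly when $n=1$.

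Substituting these identities, together with $P = R/(2(n+1))$ and $\Delta_b P = \Delta_b R/(2(n+1))$, into the formula for $Q_4$, the coefficient of $\Delta_b R$ becomes
\[ \frac{n+1}{n(n+2)} - \frac{2}{n^2(n+2)} = \frac{(n+1)n - 2}{n^2(n+2)} = \frac{(n-1)(n+2)}{n^2(n+2)} = \frac{n-1}{n^2}, \]
while the $P^2$ term contributes $\frac{(n-1)}{2n^2}R^2$ and the $|A_{\alpha\beta}|^2$ term already carries its explicit $(n-1)$ factor. Collecting,
\[ Q_4 = (n-1)\left[\frac{1}{n^2}\Delta_b R - \frac{2}{n}|A_{\alpha\beta}|^2 + \frac{1}{2n^2}R^2\right], \]
and multiplying by $2/(n-1)$ produces~\eqref{eqn:q4prime}; the case $n=1$ is then obtained by evaluation, yielding~\eqref{eqn:q4prime_crit}.

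The computation is essentially routine arithmetic; the only subtle point — which is really the content of the lemma — is that the formula for $Q_4$ is a rational function of $n$ that a priori does not have $(n-1)$ as a factor. The pseudo-Einstein reductions conspire to introduce exactly one such factor by cancelling a specific combination of $\Delta_b P$ and $\Imaginary\nabla^\alpha\nabla^\beta A_{\alpha\beta}$ against the explicit $(n-1)$ coefficients elsewhere. There is no real obstacle beyond careful bookkeeping of the coefficients.
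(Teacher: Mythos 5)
Your proposal is correct and follows essentially the same route as the paper: the paper likewise observes that only the zeroth-order term survives at $f=1$, uses the pseudo-Einstein hypothesis via~\eqref{eqn:divtrfreep} and~\eqref{eqn:subplacian_R} to get $\lv\tracefree{P_{\alpha\bar\beta}}\rv^2=0$ and $\Delta_b R=2n\Imaginary\nabla^\alpha\nabla^\beta A_{\alpha\beta}$, and then extracts the factor of $(n-1)$ before taking the formal limit. The only cosmetic difference is that you substitute into Definition~\ref{defn:cr_paneitz} directly while the paper substitutes into the formula~\eqref{eqn:q} for $P_4^\prime$; the arithmetic and the key cancellations are identical.
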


\begin{proof}

When $n>1$, it follows from~\eqref{eqn:divtrfreep} and the pseudo-Einstein assumption that $\lv\tracefree{P_{\alpha\bar\beta}}\rv^2=0$ and
\[ \Delta_b R - 2n\Imaginary\left(\nabla^\alpha\nabla^\beta A_{\alpha\beta}\right) = 0 . \]
Plugging in to~\eqref{eqn:q}, we see that
\[ P_4^\prime(1) = \frac{n-1}{n^2}\Delta_b R - \frac{2(n-1)}{n}\lv A_{\alpha\beta}\rv^2 + \frac{(n-1)}{2n^2}R^2 . \]
Multiplying by $\frac{2}{n-1}$ then yields the desired result.
\end{proof}

Let us now verify some basic properties of the $P^\prime$-operator and the $Q^\prime$-curvature.  These objects are best behaved and the most interesting in the critical dimension $n=1$, so we shall make our statements only in this dimension.

\begin{prop}
\label{prop:q_covariant}
Let $(M^3,J,\theta)$ be a pseudohermitian manifold with $P^\prime$-operator $P_4^\prime\colon\mP\to C^\infty(M)$.  Then the following properties hold.
\begin{enumerate}
\item $P_4^\prime$ is formally self-adjoint.
\item Given another choice of contact form $\hat\theta=e^\sigma\theta$ with $\sigma\in C^\infty(M)$, it holds that
\begin{equation}
\label{eqn:q_operator_covariant}
e^{2\sigma}\hat P_4^\prime(f) = P_4^\prime(f) + P_4(f\sigma)
\end{equation}
for all $f\in\mP$, where $\hat P_4^\prime$ denotes the $P_4^\prime$-operator defined in terms of $\hat\theta$.
\end{enumerate}
\end{prop}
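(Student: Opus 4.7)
My plan is to deduce both parts of Proposition~\ref{prop:q_covariant} from the general-dimension CR Paneitz operator by analytic continuation in the dimension, following Branson's principle already implicit in Definition~\ref{defn:q}. The idea is to treat the formula of Definition~\ref{defn:cr_paneitz} for $P_4$ as a family of operators on our fixed three-manifold $(M^3,J,\theta)$ with coefficients polynomial in $n$, apply it to pluriharmonic functions using the simplifications of Proposition~\ref{prop:pluriharmonic}, and divide by $(n-1)/2$ before reading off the $n\to 1$ limit.

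For part~(1), formal self-adjointness of $P_4$ in general dimension is manifest in Definition~\ref{defn:cr_paneitz}: the leading operator $\Delta_b^2+\nabla_0^2$ is self-adjoint; each divergence-form term is self-adjoint after integration by parts, using $A_{\alpha\beta}=A_{\beta\alpha}$ and the Hermitian symmetry of $\tracefree{P_{\alpha\bar\beta}}$; and $Q_4$ acts by multiplication. Consequently $\tfrac{2}{n-1}P_4$ is self-adjoint for every $n>1$, giving a rational-in-$n$ identity
\[ \int_M g\cdot\tfrac{2}{n-1}P_4 f \;=\; \int_M f\cdot\tfrac{2}{n-1}P_4 g \]
for $f,g\in\mP$. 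Since both sides are regular at $n=1$, passing to the limit yields self-adjointness of $P_4^\prime$ on $\mP$.

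For part~(2), the CR covariance~\eqref{eqn:cr_covariant} of $P_4$ in general dimension is equivalent, via the density transformation law~\eqref{eqn:complex_transformation}, to the scalar identity
\[ e^{\tfrac{n+3}{2}\sigma}\widehat{P_4}(\phi) \;=\; P_4\!\left(e^{\tfrac{n-1}{2}\sigma}\phi\right) \]
for every smooth function $\phi$. Expanding the exponential, $e^{\tfrac{n-1}{2}\sigma}\phi = \phi + \tfrac{n-1}{2}\sigma\phi + O((n-1)^2)$, and applying $P_4$ by linearity,
\[ P_4\!\left(e^{\tfrac{n-1}{2}\sigma}\phi\right) \;=\; P_4(\phi) \;+\; \tfrac{n-1}{2}\,P_4(\sigma\phi) \;+\; O((n-1)^2). \]
For $\phi\in\mP$, Lemma~\ref{lem:q} identifies $P_4(\phi) = \tfrac{n-1}{2}P_4^\prime(\phi)$ as a rational-in-$n$ expression, so dividing by $(n-1)/2$ yields
\[ e^{\tfrac{n+3}{2}\sigma}\widehat{P_4^\prime}(\phi) \;=\; P_4^\prime(\phi) \;+\; P_4(\sigma\phi) \;+\; O(n-1), \]
and sending $n\to 1$ produces exactly~\eqref{eqn:q_operator_covariant}.

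The hard part will be making the analytic-continuation step rigorous: one must view $P_4$ as a symbolic polynomial in $n$ acting on our fixed three-manifold, interpret pluriharmonicity in a dimension-uniform way (so that the simplifications used to pass from Definition~\ref{defn:cr_paneitz} to Lemma~\ref{lem:q} are polynomial-in-$n$ identities valid for our pluriharmonic $\phi$), and confirm that the formal limit commutes with these manipulations. The cleanest rigorous route, pursued in Section~\ref{sec:covariance}, is to verify~\eqref{eqn:q_operator_covariant} directly via the contact-form transformation formulas of Lemma~\ref{lem:cr_change}: substitute $\hat\theta=e^\sigma\theta$, expand $\widehat{P_4^\prime}(\phi)$ term by term using the transformation rules for $R$, $A_{\alpha\beta}$, and $W_\alpha = \nabla_\alpha R - i\nabla^\beta A_{\alpha\beta}$, and collect. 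As the paper emphasizes, the identity fails on spaces strictly larger than $\mP$, so the direct computation must at some point invoke the pluriharmonic condition $P_\alpha\phi=0$.
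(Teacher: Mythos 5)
Your proposal is correct and follows essentially the same route as the paper: part (1) is obtained by dividing the self-adjointness identity for the general-dimension $P_4$ by $\tfrac{n-1}{2}$ and letting $n\to1$, and part (2) by writing the covariance of $P_4$ as $e^{\frac{n+3}{2}\sigma}\widehat{P_4}(u)=P_4\bigl(e^{\frac{n-1}{2}\sigma}u\bigr)$, expanding the exponential, and taking the same formal limit. Your closing remark that the direct verification via Lemma~\ref{lem:cr_change} supplies the rigorous check is exactly the role the paper assigns to Section~\ref{sec:covariance}.
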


\begin{proof}

On a general pseudohermitian manifold $(M^{2n+1},J,\theta)$, it follows from Definition~\ref{defn:q} and the self-adjointness of $P_4$ that, given $u,v\in\mP$,
\[ \frac{n-1}{2}\int_M u\,P_4^\prime v = \int_M u\,P_4v = \int_M v\,P_4u = \frac{n-1}{2}\int_M v\,P_4^\prime u, \]
establishing the self-adjointness of $P_4^\prime$.  Likewise, the covariance~\eqref{eqn:cr_covariant} of the CR Paneitz operator implies that for all $u\in\mP$,
\[ \frac{n-1}{2}e^{\frac{n+3}{2}\sigma}\hat P_4^\prime(u) = P_4\left(e^{\frac{n-1}{2}\sigma}u\right) = \frac{n-1}{2}P_4^\prime(u) + P_4\left(\left(e^{\frac{n-1}{2}\sigma}-1\right)u\right) . \]
Multiplying both sides by $\frac{2}{n-1}$ and taking the limit $n\to 1$ yields~\eqref{eqn:q_operator_covariant}.
\end{proof}

\begin{remark}

It would be nice to have a formula for the critical $P_4^\prime$-operator which is manifestly formally self-adjoint on \emph{all} functions.  At present, we have not been able to find such a formula without the assumption that $\theta$ is a pseudo-Einstein contact form, in which case~\eqref{eqn:p4prime_invariant_contact_form} gives such a formula.
\end{remark}

Using the same argument with $Q_4^\prime$ in place of $P_4^\prime$ and $P_4^\prime$ in place of $P_4$, we get a similar result for transformation law of the $Q^\prime$-curvature when the contact form is changed by a CR pluriharmonic function.

\begin{prop}
\label{prop:qprime_covariant}
Let $(M^3,J,\theta)$ be a pseudo-Einstein manifold.  Given $\sigma\in\mP$, denote $\hat\theta=e^\sigma\theta$.  Then
\begin{equation}
\label{eqn:qprime_operator_covariant}
e^{2\sigma}\hat Q_4^\prime = Q_4^\prime + P_4^\prime(\sigma) + \frac{1}{2}P_4(\sigma^2) .
\end{equation}
In particular, if $M$ is compact then
\begin{equation}
\label{eqn:qprime_integral}
\int_M \hat Q_4^\prime\,\hat\theta\wedge d\hat\theta = \int_M Q_4^\prime\,\theta\wedge d\theta .
\end{equation}
\end{prop}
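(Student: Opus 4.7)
The strategy is analytic continuation in the dimension, paralleling the proof of Proposition~\ref{prop:q_covariant} but at one higher order in $(n-1)$. I would begin from CR covariance of the Paneitz operator~\eqref{eqn:cr_covariant} evaluated at the constant $1$, giving the formal identity
\[ e^{\frac{n+3}{2}\sigma}\hat P_4(1) = P_4\!\left(e^{\frac{n-1}{2}\sigma}\right), \]
valid in all dimensions. Taylor expanding the exponential on the right and invoking two structural facts---(i) since $\sigma\in\mP$, Definition~\ref{defn:q} gives $P_4(\sigma)=\tfrac{n-1}{2}P_4'(\sigma)$, and (ii) since $\theta$ (and, by Proposition~\ref{prop:pluriharmonic_psie}, also $\hat\theta$) is pseudo-Einstein, the Bianchi-type simplification used in the proof of Lemma~\ref{lem:qprime} forces the Hirachi $Q$-curvature to vanish at $n=1$, so that $P_4(1)=\frac{(n-1)^2}{4}Q_4'$ with $Q_4'$ regular at $n=1$---every surviving term on the right carries at least two factors of $(n-1)$. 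Multiplying through by $\frac{4}{(n-1)^2}$ absorbs the $k\ge 3$ Taylor terms into $O(n-1)$ and, after letting $n\to 1$, yields~\eqref{eqn:qprime_operator_covariant}.

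For~\eqref{eqn:qprime_integral}, I would insert~\eqref{eqn:qprime_operator_covariant} into $\int_M\hat Q_4'\,\hat\theta\wedge d\hat\theta$, apply $\hat\theta\wedge d\hat\theta=e^{2\sigma}\theta\wedge d\theta$, and verify that the two correction integrals vanish. Formal self-adjointness of $P_4$ converts $\int_M P_4(\sigma^2)\,\theta\wedge d\theta$ into $\int_M\sigma^2\,P_4(1)\,\theta\wedge d\theta$, which vanishes pointwise because on a pseudo-Einstein three-manifold the Hirachi $Q$-curvature $P_4(1)=-\tfrac43\nabla^\alpha W_\alpha$ vanishes identically. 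Similarly, formal self-adjointness of $P_4'$ on $\mP$ (Proposition~\ref{prop:q_covariant}(1)), together with $1\in\mP$, converts $\int_M P_4'(\sigma)\,\theta\wedge d\theta$ into $\int_M\sigma\,P_4'(1)\,\theta\wedge d\theta$, which vanishes for the same reason.

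The main obstacle is the precise $(n-1)$-order bookkeeping in step (ii): the renormalization $\frac{4}{(n-1)^2}$ in the definition of $Q_4'$ demands that $P_4(1)$ vanish to order $(n-1)^2$, whereas from Definition~\ref{defn:cr_paneitz} it is only of order $(n-1)$. Pseudo-Einsteinness supplies the missing factor of $(n-1)$, via the Bianchi identity~\eqref{eqn:divtrfreep} and its consequence~\eqref{eqn:subplacian_R}, exactly as in the proof of Lemma~\ref{lem:qprime}; this is the crucial point that makes $Q_4'$ regular at $n=1$ and the transformation law meaningful.
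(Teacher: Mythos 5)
Your proof follows essentially the same analytic-continuation-in-dimension argument as the paper: starting from the covariance identity $e^{\frac{n+3}{2}\sigma}\hat P_4(1)=P_4\bigl(e^{\frac{n-1}{2}\sigma}\bigr)$, Taylor expanding, using $P_4(\sigma)=\frac{n-1}{2}P_4^\prime(\sigma)$ for $\sigma\in\mP$ and $P_4(1)=\bigl(\frac{n-1}{2}\bigr)^2Q_4^\prime$ on both pseudo-Einstein scales, then multiplying by $\frac{4}{(n-1)^2}$ and letting $n\to1$; the integral identity is likewise deduced from self-adjointness of $P_4$ and $P_4^\prime$ exactly as in the paper. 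One minor slip worth noting: in dimension three it is $P_4^\prime(1)$, not $P_4(1)$, that equals Hirachi's $Q$-curvature $-\frac{4}{3}\nabla^\alpha W_\alpha$ (and $P_4(1)=0$ identically for \emph{any} contact form, pseudo-Einstein or not), so your conclusion that $\int_M\sigma^2\,P_4(1)\,\theta\wedge d\theta=0$ holds for an even simpler reason than the one you give.
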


\begin{proof}

For $n>1$ and $\sigma\in\mP$, we have that
\begin{align*}
\left(\frac{n-1}{2}\right)^2 e^{\frac{n+3}{2}\sigma}\hat Q_4^\prime & = \left(\frac{n-1}{2}\right)^2 Q_4^\prime + P_4\left(e^{\frac{n-1}{2}\sigma} - 1 \right) \\
& = \left(\frac{n-1}{2}\right)^2 Q_4^\prime + \frac{n-1}{2}P_4\left(\sigma + \frac{n-1}{4}\sigma^2 + O\big((n-1)^2\big)\right) .
\end{align*}
Multiplying by $\frac{4}{(n-1)^2}$ and taking the limit $n\to 1$ yields~\eqref{eqn:qprime_operator_covariant}.  The invariance~\eqref{eqn:qprime_integral} then follows from the self-adjointness of $P_4^\prime$ and $P_4$ on their respective domains and the facts that $P_4(1)=0$ for any contact form and $P_4^\prime(1)=0$ for any pseudo-Einstein contact form.
\end{proof}

We conclude this section with a useful observation about the sign of the $P^\prime$-operator, which can be regarded as a CR analogue of a result of Gursky~\cite{Gursky1999} for the Paneitz operator in conformal geometry.

\begin{prop}
\label{prop:gursky}
Let $(M^3,J)$ be a compact CR manifold which admits a pseudo-Einstein contact form $\theta$ with nonnegative scalar curvature.  Then $P_4^\prime\geq 0$ and the kernel of $P^\prime$ consists of the constants.
\end{prop}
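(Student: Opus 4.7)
The plan is to exploit the inclusion $\mP\subset\ker P_4$ to rewrite the quadratic form $\int_M f\,P_4^\prime f$ as a sum of manifestly nonnegative quantities whenever $R\geq 0$. Since $\theta$ is pseudo-Einstein, formula~\eqref{eqn:p4prime_invariant_contact_form} expresses
\[
P_4^\prime = 4\Delta_b^2 - 8\Imaginary\nabla^\alpha\bigl(A_{\alpha\beta}\nabla^\beta\cdot\bigr) - 4\Real\nabla^\alpha\bigl(R\nabla_\alpha\cdot\bigr)
\]
on $\mP$, while the three-dimensional CR Paneitz operator is $P_4 = \Delta_b^2 + T^2 - 4\Imaginary\nabla^\alpha(A_{\alpha\beta}\nabla^\beta\cdot)$. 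The torsion-dependent imaginary pieces cancel perfectly in the combination $P_4^\prime - 2P_4$, leaving the operator identity
\[
P_4^\prime - 2P_4 = 2\Delta_b^2 - 2T^2 - 4\Real\nabla^\alpha\bigl(R\nabla_\alpha\cdot\bigr).
\]

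For any real-valued $f\in\mP$, the inclusion $\mP\subset\ker P_4$ gives $\int_M f\,P_4 f = 0$, so $\int_M f\,P_4^\prime f = \int_M f\,(P_4^\prime - 2P_4)f$. I would then integrate each piece by parts separately: the biharmonic term contributes $2\int_M(\Delta_b f)^2$; the Reeb term contributes $2\int_M(Tf)^2$, since the flow of $T$ preserves $\theta\wedge d\theta$ so that $\int_M f\,T^2 f = -\int_M (Tf)^2$; and the scalar curvature term contributes $4\int_M R\,\lv\nabla_\alpha f\rv^2$, using the complex integration by parts to produce the real, nonnegative integrand $\lv\nabla_\alpha f\rv^2 = (\nabla^\alpha f)(\nabla_\alpha f)$. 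Assembling these,
\[
\int_M f\,P_4^\prime f = 2\int_M(\Delta_b f)^2 + 2\int_M(Tf)^2 + 4\int_M R\,\lv\nabla_\alpha f\rv^2 \geq 0
\]
under the hypothesis $R\geq 0$, which establishes $P_4^\prime\geq 0$ on $\mP$.

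For the kernel, suppose $P_4^\prime f = 0$ with $f\in\mP$. Formal self-adjointness (Proposition~\ref{prop:q_covariant}) together with the identity above force all three nonnegative summands to vanish; in particular $\Delta_b f \equiv 0$. A further integration by parts then yields $\int_M \lv\nabla_b f\rv^2 = 0$, so the horizontal differential of $f$ vanishes, and combined with $Tf\equiv 0$ we obtain $df = 0$. Since $M$ is connected, $f$ must be constant; alternatively one may invoke the strong maximum principle for the subelliptic operator $\Delta_b$ on a compact strictly pseudoconvex CR manifold.

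The main obstacle will be the bookkeeping around the complex integration by parts, particularly the identity $\Real\int_M f\nabla^\alpha(R\nabla_\alpha f) = -\int_M R\,\lv\nabla_\alpha f\rv^2$. This rests on the standard divergence identity $\int_M \nabla_{\bar\gamma}W^{\bar\gamma}\,\theta\wedge d\theta = 0$ for any smooth $(0,1)$-vector field $W^{\bar\gamma}$ on the closed manifold $M$, applied with $W^{\bar\gamma} = h^{\alpha\bar\gamma} R\,\nabla_\alpha f$, together with careful tracking of reality under the Levi form $h_{\alpha\bar\beta}$.
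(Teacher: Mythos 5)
Your argument is correct, and it arrives at the same sum-of-squares conclusion as the paper by a genuinely different (though closely parallel) decomposition. The paper exploits the \emph{pointwise} vanishing of $P_\alpha u$ for $u\in\mP$: using the identity $\Delta_b^2 - 2\Imaginary\nabla^\beta\left(A_{\alpha\beta}\nabla^\alpha\right) = 2\Real\nabla^\alpha\left(\nabla_\alpha\nabla^\beta\nabla_\beta + P_\alpha\right)$ it rewrites $P_4^\prime u = 4\Real\nabla^\alpha\left(2\nabla_\alpha\nabla^\beta\nabla_\beta u - R\nabla_\alpha u\right)$ and integrates by parts twice to get $\int_M u\,P_4^\prime u$ as a multiple of $\int\lv\nabla^\beta\nabla_\beta u\rv^2$ plus a multiple of $\int R\lv\nabla_\beta u\rv^2$. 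You instead subtract $2P_4$ and use only the weaker, integrated fact $P_4\rv_{\mP}=0$; the torsion terms cancel at the level of the leading-order formulas, and you land on $2\int(\Delta_b f)^2 + 2\int(Tf)^2 + 4\int R\lv\nabla_\alpha f\rv^2$. The two quadratic forms are in fact pointwise equal: for real $f$ the commutator identity $\nabla^\alpha\nabla_\alpha f - \nabla_\alpha\nabla^\alpha f = i\nabla_0 f$ (in dimension $n=1$) gives $(\Delta_b f)^2 + (Tf)^2 = 4\lv\nabla^\beta\nabla_\beta f\rv^2$, so your decomposition is an honest repackaging rather than a coincidence. What your route buys is economy --- no manipulation of the third-order operator $P_\alpha$ and only one integration by parts on the fourth-order piece; what the paper's route buys is the explicit divergence form~\eqref{eqn:signed_version} of $P_4^\prime$, which is of independent use. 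Two small additions would make your write-up complete: note that constants do lie in the kernel because $P_4^\prime(1)=0$ for a pseudo-Einstein contact form (immediate from~\eqref{eqn:p4prime_invariant_contact_form}), and, as the paper does, observe via~\eqref{eqn:q_operator_covariant} that nonnegativity of $P_4^\prime$ with kernel $\bR$ is a CR-invariant assertion, so that computing in the given scale $\theta$ suffices to conclude the statement for every choice of contact form.
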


\begin{proof}

It follows from~\eqref{eqn:q_operator_covariant} that the conclusion $P_4^\prime\geq0$ with $\ker P_4^\prime=\bR$ is CR invariant, so we may compute in the scale $\theta$.  From the definition of the sublaplacian we see that
\[ \Delta_b^2 - 2\Imaginary\nabla^\beta\left(A_{\alpha\beta}\nabla^\alpha\right) = 2\Real\nabla^\alpha\left(\nabla_\alpha\nabla^\beta\nabla_\beta + P_\alpha\right) . \]
It thus follows that the $P^\prime$-operator is equivalently defined via the formula
\begin{equation}
\label{eqn:signed_version}
P_4^\prime u = 4\Real\nabla^\alpha\left(2\nabla_\alpha\nabla^\beta\nabla_\beta u - R\nabla_\alpha u\right)
\end{equation}
for all $u\in\mP$.  Multiplying~\eqref{eqn:signed_version} by $u$ and integrating yields
\[ \int_M u\,P_4^\prime u = 4\int_M \left(2\left|\nabla^\beta\nabla_\beta u\right|^2 + 2R\left|\nabla_\beta u\right|^2\right) . \]
Since $R\geq0$, this is clearly nonnegative, showing that $P_4^\prime\geq0$.  Moreover, if equality holds, then $\nabla^\beta\nabla_\beta u=0$, which is easily seen to imply that $u$ is constant, as desired.
\end{proof}

It would be preferable for Proposition~\ref{prop:gursky} to require checking only CR invariant assumptions.  For instance, one might hope to prove the same result assuming that $(M^3,J)$ has nonnegative CR Yamabe constant and admits a pseudo-Einstein contact form.  However, it is at present unclear whether these assumptions imply that one can choose a contact form as in the statement of Proposition~\ref{prop:gursky}.                 
\section{CR covariance of the $P^\prime$-operator}
\label{sec:covariance}

In this section we give a direct computational proof of transformation formula~\eqref{eqn:q_operator_covariant} of the $P^\prime$-operator after a conformal change of contact form.  Indeed, we will compute the transformation formula for the operator $P_4^\prime$ as defined by~\eqref{eqn:q_crit} acting on functions --- rather than only pluriharmonic functions --- and thus establish that one cannot hope to find an invariant operator acting instead on the kernel of the CR Paneitz operator.

To begin, we recall from Lemma~\ref{lem:cr_change} and~\eqref{eqn:Walpha} that given a three-dimensional pseudohermitian manifold $(M^3,J,\theta)$ and an arbitrary one-form $\tau_\alpha\in\mE_\alpha(-1,-1)$, if $\hat\theta=e^\sigma\theta$ then
\begin{equation}
\label{eqn:hirachi}
\widehat{\nabla^\alpha}\widehat{\tau_\alpha} = \nabla^\alpha\tau_\alpha \quad\text{and}\quad \widehat{W_\alpha} = W_\alpha - 3P_\alpha\sigma .
\end{equation}

Another useful computation in preparation for our identification of the transformation law of $P_4^\prime$ is the following expression for the CR Paneitz operator applied to a product of two functions.

\begin{lem}
\label{lem:cr_prod}
Let $(M^3,J,\theta)$ be a pseudohermitian three-manifold and let $P_4$ be the CR Paneitz operator.  Given $f,\sigma\in\mE(0,0)$, it holds that
\begin{align*}
\frac{1}{4}P_4(f\sigma) & = \frac{1}{4}fP_4(\sigma) + \frac{1}{4}\sigma P_4(f) + 4\Real\left(\nabla^\alpha\sigma\nabla_\alpha\nabla^\beta\nabla_\beta f\right) + 4\Real\left(\nabla^\alpha f\nabla_\alpha\nabla^\beta\nabla_\beta\sigma\right) \\
& \quad + 2\Real\left(R\nabla^\alpha\sigma\nabla_\alpha f\right) + 2\Real\left(\nabla^\alpha\nabla^\beta\sigma\nabla_\alpha\nabla_\beta f\right) + 4\Real\left(\nabla^\alpha\nabla_\alpha\sigma\nabla_\beta\nabla^\beta f\right) .
\end{align*}
Alternatively,
\begin{align*}
P_4(\sigma f) & = \sigma P_4(f) + f P_4(\sigma) + 4\nabla^\alpha\sigma P_\alpha f + 4\nabla^\alpha f P_\alpha\sigma \\
& \quad + 4\nabla^\alpha\left(2\nabla_\alpha\sigma\nabla_\beta\nabla^\beta f + \nabla^\beta\sigma \nabla_\alpha\nabla_\beta f + 2\nabla_\alpha f\nabla_\beta\nabla^\beta\sigma + \nabla^\beta f \nabla_\alpha\nabla_\beta\sigma\right) .
\end{align*}
\end{lem}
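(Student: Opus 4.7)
The strategy is to start from the divergence form of the CR Paneitz operator in dimension three, $P_4 u = 4\nabla^\alpha P_\alpha u$, obtained by combining~\eqref{eqn:subplacian_squared} with Proposition~\ref{prop:pluriharmonic}. From this identity, the second (divergence-form) identity is essentially direct Leibniz, while the first (real-part) identity follows from the second after expanding and applying the commutator relations of Lemma~\ref{lem:lee_commutator}.

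For the second identity, I would first compute $P_\alpha(\sigma f)$ by twice applying Leibniz to $\nabla_\alpha\nabla_\beta\nabla^\beta(\sigma f)$ and once to $iA_{\alpha\beta}\nabla^\beta(\sigma f)$. The result takes the form $\sigma P_\alpha f + f P_\alpha\sigma$ plus six mixed-order cross terms built from a first derivative of one of $\sigma,f$ and a second derivative of the other. Applying $4\nabla^\alpha$ and once more using Leibniz then produces $\sigma P_4 f + f P_4\sigma + 4(\nabla^\alpha\sigma)P_\alpha f + 4(\nabla^\alpha f)P_\alpha\sigma$ together with the divergence of the six cross terms. Pairing these cross terms by the symmetry $\nabla_\alpha\nabla_\beta = \nabla_\beta\nabla_\alpha$ on functions (from Lemma~\ref{lem:lee_commutator}) collapses them into precisely the four-term divergence displayed in the lemma.

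For the first identity, I would take the second and expand the outer $4\nabla^\alpha$ by Leibniz. Terms where $\nabla^\alpha$ hits the first-derivative factor, such as $(\nabla^\alpha\nabla_\alpha\sigma)(\nabla_\beta\nabla^\beta f)$, already appear in the desired form. Terms where $\nabla^\alpha$ hits the second-derivative factor, such as $(\nabla^\beta\sigma)(\nabla^\alpha\nabla_\alpha\nabla_\beta f)$, need to be rearranged: commuting $\nabla^\alpha\nabla_\alpha$ past $\nabla_\beta$ using $[\nabla_\alpha,\nabla_{\bar\beta}]=-ih_{\alpha\bar\beta}\nabla_0$ and $[\nabla_\alpha,\nabla_0]=A_{\alpha\gamma}\nabla^\gamma$ yields the symmetric form $\nabla_\alpha\nabla_\beta\nabla^\alpha f$ modulo a scalar-curvature contribution and torsion terms. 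Each such pair combines with its $\sigma\leftrightarrow f$ mirror image (and complex conjugate) into a real part. The scalar-curvature contribution from this commutation is exactly what produces the $2\Real(R\nabla^\alpha\sigma\nabla_\alpha f)$ term, while the spurious torsion contributions must cancel against the torsion cross terms $4i(\nabla^\alpha\sigma)(A_{\alpha\beta}\nabla^\beta f)$ and $4i(\nabla^\alpha f)(A_{\alpha\beta}\nabla^\beta\sigma)$ hidden inside the $P_\alpha$'s.

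The main obstacle is bookkeeping: the first identity contains no torsion and no $T$-derivative, so one must track every $A_{\alpha\beta}$ and $\nabla_0$ contribution arising from the commutators and verify that they all cancel. The cancellation is sensitive to the critical dimension $n=1$ and uses, in a disguised form, the Bianchi identity~\eqref{eqn:schouten_bianchi}, which is what consolidates the various Ricci-type quantities into a single multiple of $R$. Once this cancellation is verified, no further input is needed.
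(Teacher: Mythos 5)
Your proposal follows the paper's proof essentially verbatim: the second identity is obtained by Leibniz expansion of the divergence form $P_4=4\nabla^\alpha P_\alpha$, and the first by regrouping the coefficient of $\nabla^\alpha\sigma$ (and its mirror) and commuting third derivatives so that in dimension three the curvature contribution collapses to $R\nabla_\alpha f$ while the torsion and $\nabla_0$ terms cancel. The one small quibble is that the curvature term arises from the Ricci-type commutator $\nabla_{\bar\gamma}\nabla_\beta\nabla_\alpha f-\nabla_\beta\nabla_{\bar\gamma}\nabla_\alpha f=i\nabla_0\nabla_\alpha f\,h_{\beta\bar\gamma}+R_\alpha{}^\rho{}_{\beta\bar\gamma}\nabla_\rho f$ applied to the one-form $\nabla_\alpha f$ (not from the function-level commutators you list, and not from the Bianchi identity~\eqref{eqn:schouten_bianchi}, which plays no role here).
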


\begin{proof}

The second expression follows by a direct expansion using the second formula for $P_4$ in~\eqref{eqn:subplacian_squared}.  To establish the first expression, observe that the term involving $\nabla^\alpha\sigma$ in the second expression of the lemma is
\begin{equation}
\label{eqn:intermediate}
\nabla^\alpha\sigma\left(\nabla_\alpha\nabla_\beta\nabla^\beta f + iA_{\alpha\beta}\nabla^\beta f + \nabla^\beta\nabla_\beta\nabla_\alpha f\right) .
\end{equation}
Using the assumption that $M$ is three-dimensional and the commutator formula
\[ \nabla_{\bar\gamma}\nabla_\beta\nabla_\alpha f - \nabla_\beta\nabla_{\bar\gamma}\nabla_\alpha f = i\nabla_0\nabla_\alpha f\, h_{\beta\bar\gamma} + R_\alpha{}^\rho{}_{\beta\bar\gamma}\nabla_\rho f \]
due to Lee~\cite[Lemma~2.3]{Lee1988}, it holds that
\begin{equation}
\label{eqn:lee_third_commutator}
\nabla^\beta\nabla_\beta\nabla_\alpha f - \nabla_\alpha\nabla^\beta\nabla_\beta f = i\nabla_0\nabla_\alpha f + R\nabla_\alpha f .
\end{equation}
It then follows from Lemma~\ref{lem:lee_commutator} that~\eqref{eqn:intermediate} can be rewritten
\[ \nabla^\alpha\sigma\left(2\nabla_\alpha\nabla^\beta\nabla_\beta f + R\nabla_\alpha f\right), \]
from which the desired expression immediately follows.
\end{proof}

The transformation formulae~\eqref{eqn:hirachi} immediately yield the transformation formulae for the zeroth and first order terms of $P_4^\prime$.  Thus it remains to compute the transformation formulae for the higher order terms.

\begin{prop}
\label{prop:D_covariant}
Let $(M^3,J,\theta)$ be a pseudohermitian three-manifold and define the operator $D\colon\mE(0,0)\to\mE(-2,-2)$ by
\[ Df = 4\Delta_b^2 f - 8\Imaginary\left(\nabla^\alpha(A_{\alpha\beta}\nabla^\beta f)\right) - 4\Real\left(\nabla^\alpha(R\nabla_\alpha f)\right) . \]
If $\hat\theta=e^\sigma\theta$ is another choice of contact form, then
\begin{align*}
\widehat{Df} & = Df + 8\Real\nabla^\alpha\left(2\nabla_\beta\nabla^\beta\sigma\nabla_\alpha f + \nabla^\beta\nabla_\beta\sigma\nabla_\alpha f + \nabla_\alpha\nabla_\beta f \nabla^\beta\sigma - \nabla^\beta\nabla_\beta f \nabla_\alpha\sigma\right) .
\end{align*}
\end{prop}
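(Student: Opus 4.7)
The plan is to compute $\widehat{Df}$ by directly applying the change-of-contact-form rules in Lemma~\ref{lem:cr_change} to each of the three pieces of $D$. Write $Df = D_1f + D_2f + D_3f$ where $D_1f = 4\Delta_b^2 f$, $D_2f = -8\Imaginary(\nabla^\alpha(A_{\alpha\beta}\nabla^\beta f))$, and $D_3f = -4\Real(\nabla^\alpha(R\nabla_\alpha f))$. For the fourth-order piece $D_1f$, note that because $f\in\mE(0,0)$, Lemma~\ref{lem:cr_change} gives $\hat\nabla_\alpha f = \nabla_\alpha f$, while applying it to the $(0,0)$-density one-form $\tau_\alpha = \nabla_\alpha f$ yields
\[ \hat\nabla_{\bar\beta}\nabla_\alpha f = \nabla_{\bar\beta}\nabla_\alpha f + (\nabla_\gamma f)(\nabla^\gamma\sigma)\,h_{\alpha\bar\beta}. \]
Tracing with $h^{\alpha\bar\beta}$, and accounting for the fact that $h_{\alpha\bar\beta}\in\mE_{\alpha\bar\beta}(1,1)$, produces an explicit formula for $\hat\Delta_b f$ in terms of $\Delta_b f$ and $\sigma$-gradients of $f$; iterating the same procedure then gives $\hat\Delta_b^2 f$.

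For $D_2f$, substitute $\hat A_{\alpha\beta} = A_{\alpha\beta} + i\nabla_\beta\nabla_\alpha\sigma - i(\nabla_\alpha\sigma)(\nabla_\beta\sigma)$ and expand the covariant derivatives using Lemma~\ref{lem:cr_change} once more, noting that the tensor to which $\hat\nabla^\alpha$ is applied carries weight $(-1,-1)$ so that~\eqref{eqn:hirachi} keeps the outermost divergence CR invariant up to explicit $\sigma$-corrections. For $D_3f$, use $R = 2(n+1)P = 4P$ when $n=1$ together with Lemma~\ref{lem:cr_change} to get $\hat P = P + \tfrac{1}{2}\Delta_b\sigma - \tfrac{1}{2}\lv\nabla_\gamma\sigma\rv^2$ as a $(-1,-1)$-density, and expand. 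A helpful shortcut for organising the first two pieces is to exploit the CR covariance of the dimension-three Paneitz operator itself, $e^{2\sigma}\hat P_4 f = P_4 f$ on $\mE(0,0)$, where $P_4 f = \Delta_b^2 f + \nabla_0^2 f - 4\Imaginary(\nabla^\alpha(A_{\alpha\beta}\nabla^\beta f))$; writing $D_1 + D_2 = 4P_4 - 4\nabla_0^2 + 8\Imaginary(\nabla^\alpha A_{\alpha\beta}\nabla^\beta)$ reduces the bulk of the work to understanding how $\nabla_0^2 f$, $\Imaginary(\nabla^\alpha(A_{\alpha\beta}\nabla^\beta f))$ and $\Real(\nabla^\alpha(R\nabla_\alpha f))$ separately transform.

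The main obstacle is purely combinatorial: the three expansions produce a large number of terms mixing first and second $\sigma$-derivatives with first and second $f$-derivatives, and one must show these collapse to the single divergence on the right-hand side. The key tools for this reduction are the commutator identities of Lemma~\ref{lem:lee_commutator}, especially $\nabla_{\bar\beta}\nabla_\alpha - \nabla_\alpha\nabla_{\bar\beta} = ih_{\alpha\bar\beta}\nabla_0$ and its higher-order version~\eqref{eqn:lee_third_commutator}, which are used to absorb all $\nabla_0\sigma$ and $\nabla_0 f$ contributions (these cannot appear in the target since it is manifestly horizontal), and the identity~\eqref{eqn:Walpha} which handles the $\hat W_\alpha = \hat\nabla_\alpha\hat R - i\hat\nabla^\beta\hat A_{\alpha\beta}$ contributions from $D_2$ and $D_3$. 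After these cancellations, what survives is exactly $8\Real\nabla^\alpha(2\nabla_\beta\nabla^\beta\sigma\nabla_\alpha f + \nabla^\beta\nabla_\beta\sigma\nabla_\alpha f + \nabla_\alpha\nabla_\beta f\,\nabla^\beta\sigma - \nabla^\beta\nabla_\beta f\,\nabla_\alpha\sigma)$, yielding the claimed formula. The main subtlety to watch for is ensuring that no further first-order $\sigma$ times $f$-gradient terms remain --- these must cancel between the quadratic $(\nabla\sigma)(\nabla\sigma)$ contribution of $\hat A_{\alpha\beta}$ and similar terms from $\hat R$ and the density rescalings in $\hat\Delta_b^2$.
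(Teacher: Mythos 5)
Your proposal is correct and follows essentially the same route as the paper: apply the transformation rules of Lemma~\ref{lem:cr_change} to each of the three summands of $D$ separately, using the density-weight conventions (so that the outermost divergence of a $(-1,-1)$-weighted one-form is invariant as in~\eqref{eqn:hirachi}), and then combine and simplify. The optional detour through the covariance of $P_4$ and the mention of~\eqref{eqn:Walpha} are not needed --- the paper goes straight from the three substituted expressions to the stated divergence form --- but they do no harm.
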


\begin{proof}

To begin, consider how each summand of $D$ transforms.  By a straightforward application of Lemma~\ref{lem:cr_change}, we compute that
\begin{align*}
\widehat{\Delta_b^2} \widehat{f} & = \Delta_b\left(\Delta_b f - \lp\nabla f,\nabla\sigma\rp\right) + 2\Real\nabla^\alpha\left((\Delta_b f - \lp\nabla f,\nabla\sigma\rp)\nabla_\alpha\sigma\right) \\
\widehat{\nabla^\alpha}\left(\widehat{A_{\alpha\beta}}\widehat{\nabla^\beta} \widehat{f}\right) & = \nabla^\alpha\left((A_{\alpha\beta} + i\nabla_\alpha\nabla_\beta\sigma - i\nabla_\alpha\sigma\nabla_\beta\sigma)\nabla^\beta f\right) \\
\widehat{\nabla^\alpha}\left(\widehat{R}\widehat{\nabla_\alpha}\widehat{f}\right) & = \nabla^\alpha\left((R+2\Delta_b\sigma - 2\lv\nabla_\gamma\sigma\rv^2)\nabla_\alpha f\right) .
\end{align*}
The conclusion of the proposition then follows immediately from a straightforward computation.
\end{proof}

Together, \eqref{eqn:hirachi}, Lemma~\ref{lem:cr_prod}, and Proposition~\ref{prop:D_covariant} yield another proof of the transformation law~\eqref{eqn:q_operator_covariant} of the $P^\prime$-operator.  In fact, the computations above allow us to compute the transformation rule for $P_4^\prime$ under a change of contact form when the local formula~\eqref{eqn:q_crit} is extended to all of $C^\infty(M)$.

\begin{prop}
Let $(M^3,J,\theta)$ be a pseudohermitian three-manifold and let $\sigma\in C^\infty(M)$.  Set $\hat\theta=e^\sigma\theta$ and denote by $\widehat{P_4^\prime}$ and $P_4^\prime$ the operator~\eqref{eqn:q_crit} defined in terms of $\hat\theta$ and $\theta$, respectively.  Then
\begin{equation}
\label{eqn:p4prime_crit_genl_transformation}
e^{2\sigma}\widehat{P_4^\prime}(f) = P_4^\prime(f) + P_4(f\sigma) - \sigma P_4(f) - 8\Real\left(P_\alpha f \nabla^\alpha \sigma\right)
\end{equation}
for all $f\in C^\infty(M)$.  In particular,
\[ e^{2\sigma}\widehat{P_4^\prime}(f) = P_4^\prime(f) + P_4(f\sigma) \]
for all $f\in\mP$.
\end{prop}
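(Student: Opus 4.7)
My plan is a direct term-by-term computation based on the natural decomposition
\[ P_4^\prime f = Df + \tfrac{8}{3}\Real(W_\alpha\nabla^\alpha f) + Qf, \]
where $D$ is the operator of Proposition~\ref{prop:D_covariant}, $W_\alpha = \nabla_\alpha R - i\nabla^\beta A_{\alpha\beta}$ is the one-form of Section~\ref{sec:lee}, and $Q = P_4^\prime(1) = -\tfrac{4}{3}\nabla^\alpha W_\alpha$ is Hirachi's $Q$-curvature. I would transform each summand under $\hat\theta = e^\sigma\theta$ using the tools already at hand: Proposition~\ref{prop:D_covariant} for the fourth-order part, \eqref{eqn:hirachi} for $\widehat{W_\alpha}$, and the $f=1$ case of Proposition~\ref{prop:q_covariant} (equivalently, Hirachi's law $e^{2\sigma}\widehat{Q} = Q + P_4(\sigma)$) for the zeroth-order part.

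The density convention $P_4^\prime\colon\mE(0,0)\to\mE(-2,-2)$ ensures that the factor $e^{2\sigma}$ on the left-hand side converts $\hat\theta$-frame values to $\theta$-frame values. Under this convention, Proposition~\ref{prop:D_covariant} gives $e^{2\sigma}\widehat{Df} = Df + A_D[\sigma,f]$, where $A_D[\sigma,f]$ denotes the explicit real-divergence anomaly stated there, and \eqref{eqn:hirachi} combined with the density invariance of $\nabla^\alpha f$ (valid because $f\in\mE(0,0)$ has weight zero) yields
\[ e^{2\sigma}\cdot\tfrac{8}{3}\Real\bigl(\widehat{W_\alpha}\widehat{\nabla^\alpha f}\bigr) = \tfrac{8}{3}\Real(W_\alpha\nabla^\alpha f) - 8\Real(P_\alpha\sigma\,\nabla^\alpha f). \]
The zeroth-order contribution gives $e^{2\sigma}\widehat{Q}f = Qf + fP_4(\sigma)$. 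Summing the three pieces produces
\[ e^{2\sigma}\widehat{P_4^\prime}(f) = P_4^\prime(f) + fP_4(\sigma) - 8\Real(P_\alpha\sigma\,\nabla^\alpha f) + A_D[\sigma,f]. \]

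The final task is to identify this with the proposed right-hand side. Apply the second form of Lemma~\ref{lem:cr_prod} to rewrite
\[ P_4(f\sigma) - \sigma P_4(f) - fP_4(\sigma) = 4\nabla^\alpha\sigma\,P_\alpha f + 4\nabla^\alpha f\,P_\alpha\sigma + 4\nabla^\alpha(\cdots), \]
where $(\cdots)$ is the explicit integrand given there. Matching the two expressions for $e^{2\sigma}\widehat{P_4^\prime}(f)$ reduces to verifying the identity
\[ 4\nabla^\alpha\sigma\,P_\alpha f + 4\nabla^\alpha f\,P_\alpha\sigma + 4\nabla^\alpha(\cdots) = A_D[\sigma,f] + 8\Real(P_\alpha f\,\nabla^\alpha\sigma) - 8\Real(P_\alpha\sigma\,\nabla^\alpha f). \]

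The main obstacle is managing the bookkeeping in this last identification. The expressions $4\nabla^\alpha\sigma\,P_\alpha f$ and $4\nabla^\alpha f\,P_\alpha\sigma$ are individually complex-valued, and their real parts must combine with the divergence integrand from Lemma~\ref{lem:cr_prod} to produce the real divergence $A_D[\sigma,f]$ modulo the two asymmetric real-part corrections. The asymmetry between $P_\alpha\sigma\,\nabla^\alpha f$ and $P_\alpha f\,\nabla^\alpha\sigma$ reflects the fact that formula~\eqref{eqn:q_crit} treats $f$ as the differentiated argument while $\sigma$ enters only through curvatures, whereas $P_4(f\sigma)$ is manifestly symmetric in its two inputs. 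Completing the reduction requires systematic use of Lemma~\ref{lem:lee_commutator} and \eqref{eqn:lee_third_commutator} to transfer derivatives between $f$ and $\sigma$ and to match each monomial term in $A_D$ against the corresponding term arising from distributing $\nabla^\alpha$ through the divergence in Lemma~\ref{lem:cr_prod}.
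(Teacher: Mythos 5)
Your proposal follows essentially the same route as the paper's proof: the same decomposition of $P_4^\prime$ into the operator $D$ of Proposition~\ref{prop:D_covariant}, the first-order term handled via the transformation of $W_\alpha$ in~\eqref{eqn:hirachi}, and the zeroth-order term given by Hirachi's $Q$-curvature, followed by Lemma~\ref{lem:cr_prod} to trade $fP_4(\sigma)$ for $P_4(f\sigma)$ and~\eqref{eqn:lee_third_commutator} to commute derivatives in the final matching. The residual bookkeeping you flag as the main obstacle is exactly the step the paper itself dispatches in one line, so the argument is correct and not genuinely different.
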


\begin{remark}

The transformation rule~\eqref{eqn:p4prime_crit_genl_transformation} obviously remains true when one adds a multiple of the CR Paneitz operator to $P_4^\prime$.
\end{remark}

\begin{proof}

It follows from~\eqref{eqn:hirachi} and Proposition~\ref{prop:D_covariant} that
\begin{align*}
e^{2\sigma}\widehat{P_4^\prime}(f) & = P_4^\prime(f) + f P_4\sigma - 8\Real\left(P_\alpha\sigma \nabla^\alpha f\right) \\
& \quad + 8\Real\nabla^\alpha\left(2(\nabla_\beta\nabla^\beta\sigma)(\nabla_\alpha f) + (\nabla^\beta\nabla_\beta\sigma)(\nabla_\alpha f) + (\nabla_\alpha\nabla_\beta f)(\nabla^\beta\sigma) - (\nabla^\beta\nabla_\beta f)(\nabla_\alpha\sigma)\right) .
\end{align*}
Using Lemma~\ref{lem:cr_prod} to write $fP_4\sigma$ in terms of $P_4(f\sigma)$, we find that
\begin{align*}
e^{2\sigma}\widehat{P_4^\prime}(f) & = P_4^\prime(f) + P_4(\sigma f) - \sigma P_4f - 4\Real\left(P_\alpha f\nabla^\alpha\sigma\right) \\
& \quad + 4\Real\left((2\nabla^\alpha\nabla_\beta\nabla^\beta\sigma - \nabla^\alpha\nabla^\beta\nabla_\beta\sigma - \nabla_\beta\nabla^\beta\nabla^\alpha\sigma + 3iA^{\alpha\beta}\nabla_\beta\sigma)\nabla_\alpha f\right) \\
& \quad - 4\Real\left((2\nabla^\alpha\nabla_\beta\nabla^\beta f + 2\nabla^\alpha\nabla^\beta\nabla_\beta f - \nabla_\beta\nabla^\beta\nabla^\alpha f)\nabla_\alpha\sigma\right) .
\end{align*}
The result then follows by using~\eqref{eqn:lee_third_commutator} to commute derivatives in the last two lines and the definition of the third order operator $P_\alpha$.
\end{proof}           
\section{CR transformation property of the $Q^\prime$-curvature}
\label{sec:qprime}

In this section we give a direct computational proof of the transformation formula~\eqref{eqn:qprime_operator_covariant} relating the $Q^\prime$-curvatures of two pseudo-Einstein contact forms on the same CR manifold.  As in Section~\ref{sec:covariance}, we will in fact compute how the scalar~\eqref{eqn:q4prime_crit} transforms under a conformal change of contact form without assuming either contact form is pseudo-Einstein.  This has two benefits.  First, it makes clear that the $Q^\prime$-curvature only transforms as in~\eqref{eqn:qprime_operator_covariant} when both contact forms are pseudo-Einstein, as opposed to having vanishing $Q$-curvature.  Second, it will allow us to prove Theorem~\ref{thm:qprime_upper_bound} by appealing to the resolution of the CR Yamabe Problem~\cite{ChengMalchiodiYang2013,JerisonLee1987,JerisonLee1988}.

First, as a consequence of Lemma~\ref{lem:cr_change}, we see that if $\hat\theta=e^\sigma\theta$, then
\begin{align*}
\widehat{R}^2 & = R^2 + 4R\Delta_b\sigma + 4(\Delta_b\sigma)^2 - 4R\lv\nabla_\gamma\sigma\rv^2 - 8\lv\nabla_\gamma\sigma\rv^2\Delta_b\sigma + 4\lv\nabla_\gamma\sigma\rv^4 \\
4\lv\widehat{A_{\alpha\beta}}\rv^2 & = 4\lv A_{\alpha\beta}\rv + 8\Imaginary\left(A_{\alpha\beta}\nabla^\alpha\nabla^\beta\sigma\right) + 4\lv\nabla_\alpha\nabla_\beta\sigma\rv^2 \\
& \quad - 8\Imaginary\left(A_{\alpha\beta}\nabla^\alpha\sigma\nabla^\beta\sigma\right) - 8\Real\left((\nabla_{\alpha\beta}\sigma)\nabla^\alpha\sigma\nabla^\beta\sigma\right) + 4\lv\nabla_\gamma\sigma\rv^4 \\
2\widehat{\Delta_b} \widehat{R} & = 2\Delta_b\left(P+2\Delta_b\sigma-2\lv\nabla_\gamma\sigma\rv^2\right) + 4\Real\nabla^\beta\left(\left(R+2\Delta_b\sigma-2\lv\nabla_\gamma\sigma\rv^2\right)\nabla_\beta\sigma\right) .
\end{align*}
It is immediately clear that the transformation law for $Q_4^\prime$ depends at most quadratically on $\sigma$.  Using the three-dimensional Bochner formula (cf.\ \cite{ChangChiu2007,ChangTieWu2010,ChanilloChiuYang2010,Chiu2006})
\begin{align*}
-\Delta_b\lv\nabla_\gamma\sigma\rv^2 & = 2\nabla_\alpha\nabla_\beta\sigma\nabla^\alpha\nabla^\beta\sigma + 2\nabla_\alpha\nabla^\alpha\sigma\nabla^\beta\nabla_\beta\sigma - \lp\nabla_b\sigma,\nabla_b\Delta_b\sigma\rp \\
& \quad - 2\Real\left(\nabla^\alpha\sigma(\nabla_\alpha\nabla_\beta\nabla^\beta\sigma-\nabla^\beta\nabla_\beta\nabla_\alpha\sigma)\right)
\end{align*}
together with the consequence
\begin{align*}
\frac{1}{2}P_4(\sigma^2) - \sigma P_4(\sigma) & = 8\Real\left(\nabla^\alpha\sigma P_\alpha\sigma\right) + 8\Real\left(\nabla^\alpha\sigma\nabla^\beta\nabla_\beta\nabla_\alpha\sigma\right) \\
& \quad + 4\nabla^\alpha\nabla^\beta\nabla_\alpha\nabla_\beta\sigma + 8\nabla_\alpha\nabla^\alpha\sigma\nabla^\beta\nabla_\beta\sigma - R\lv\nabla_\gamma\sigma\rv^2
\end{align*}
of Lemma~\ref{lem:cr_prod}, it follows immediately that the term of $\hat Q_4^\prime$ which is quadratic in $\sigma$ is given by
\[ U(\sigma) := \frac{1}{2}P_4(\sigma^2) - \sigma P_4(\sigma) - 16\Real\left(\nabla^\alpha\sigma P_\alpha\sigma\right) . \]
In particular, if $\sigma\in\mP$, then
\[ U(\sigma) = \frac{1}{2}P_4(\sigma^2) , \]
as expected.

On the other hand, the term of $\hat Q_4^\prime$ which is linear in $\sigma$ is given by
\begin{align*}
V(\sigma) & := 4\Delta_b^2\sigma - 8\Imaginary\left(\nabla^\alpha(A_{\alpha\beta}\nabla^\beta\sigma)\right) - 4\Real\left(\nabla^\alpha(R\nabla_\alpha\sigma)\right) + 8\Real\left(W_\alpha\nabla^\alpha\sigma\right) \\
& = P_4^\prime(\sigma) + \frac{16}{3}\Real\left(W_\alpha\nabla^\alpha\sigma\right) - Q\sigma \\
& = P_4^\prime(\sigma) + \frac{16}{3}\Real\nabla^\alpha\left(\sigma W_\alpha\right) + 3Q\sigma .
\end{align*}
In particular, if $\theta$ is a pseudo-Einstein contact form, then
\[ V(\sigma) = P_4^\prime(\sigma), \]
as expected.  In fact, we have computed the general transformation formula for the scalar invariant
\begin{equation}
\label{eqn:general_q4prime_defn}
Q_4^\prime = 2\Delta_b R - 4\lv A_{\alpha\beta}\rv^2 + R^2 .
\end{equation}

\begin{prop}
Let $(M^3,J,\theta)$ be a three-dimensional pseudohermitian manifold, regard $P_4^\prime$ as an operator $P_4^\prime\colon C^\infty(M)\to C^\infty(M)$, and define $Q_4^\prime$ by~\eqref{eqn:general_q4prime_defn}.  Given any $\sigma\in C^\infty(M)$, the scalars $Q_4^\prime$ and $\hat Q_4^\prime$ defined in terms of the contact forms $\theta$ and $\hat\theta=e^\sigma\theta$, respectively, are related by
\begin{equation}
\label{eqn:general_q4prime}
\begin{split}
e^{2\sigma}\hat Q_4^\prime & = Q_4^\prime + P_4^\prime(\sigma) + \frac{16}{3}\Real\nabla^\alpha\left(\sigma W_\alpha\right) + 3Q\sigma \\
& \quad + \frac{1}{2}P_4(\sigma^2) - \sigma P_4(\sigma) - 16\Real\left((\nabla^\alpha\sigma)(P_\alpha\sigma)\right) .
\end{split}
\end{equation}
In particular, if $M$ is compact, then
\begin{equation}
\label{eqn:general_integral_q4prime}
\int_M \hat Q_4^\prime\,\hat\theta\wedge d\hat\theta = \int_M Q_4^\prime\,\theta\wedge d\theta + 3\int_M \left(\sigma P_4\sigma + 2Q\sigma\right)\theta\wedge d\theta
\end{equation}
for $Q=P_4^\prime(1)$ Hirachi's $Q$-curvature~\eqref{eqn:Q_divergence}.
\end{prop}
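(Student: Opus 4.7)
The formula~\eqref{eqn:general_q4prime} essentially assembles the calculations already staged in the paragraphs immediately preceding the proposition. Starting from the definition~\eqref{eqn:general_q4prime_defn}, I would substitute the transformation rules of Lemma~\ref{lem:cr_change} into each of the three summands of $\hat Q_4'$ and multiply through by $e^{2\sigma}$ (this is the conformal volume factor, since $\hat\theta\wedge d\hat\theta=e^{2\sigma}\theta\wedge d\theta$ in dimension three). The result organizes as a polynomial of degree at most two in $\sigma$ and its derivatives: the $\sigma$-independent piece is $Q_4'$ itself, the linear piece is the $V(\sigma)$ isolated above, and the quadratic piece is the $U(\sigma)$ isolated above. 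Rewriting $V(\sigma)$ in the form $P_4'(\sigma)+\frac{16}{3}\Real\nabla^\alpha(\sigma W_\alpha)+3Q\sigma$ follows by inspection from~\eqref{eqn:q_crit} together with the identity $Q=-\frac{4}{3}\Real\nabla^\alpha W_\alpha$ coming from~\eqref{eqn:Q_divergence} and~\eqref{eqn:subplacian_R}; identifying $U(\sigma)$ with $\frac{1}{2}P_4(\sigma^2)-\sigma P_4(\sigma)-16\Real((\nabla^\alpha\sigma)(P_\alpha\sigma))$ uses the three-dimensional CR Bochner formula together with the product rule for $P_4$ provided by Lemma~\ref{lem:cr_prod} applied to $P_4(\sigma\cdot\sigma)$. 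Summing these three pieces yields~\eqref{eqn:general_q4prime}.

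\textbf{Integral formula.} To establish~\eqref{eqn:general_integral_q4prime} on a compact $M$, I would integrate~\eqref{eqn:general_q4prime} against $\theta\wedge d\theta$ and evaluate each of the six non-$Q_4'$ contributions in turn. The divergence $\int\frac{16}{3}\Real\nabla^\alpha(\sigma W_\alpha)\,\theta\wedge d\theta$ is immediately zero, and the self-adjointness of $P_4$ together with $P_4(1)=0$ makes $\int\frac{1}{2}P_4(\sigma^2)\,\theta\wedge d\theta=0$. For $\int P_4'(\sigma)$, inspection of~\eqref{eqn:q_crit} shows that the three top-order divergence pieces integrate to zero; integration by parts on the first-order piece $\frac{8}{3}\Real(W_\alpha\nabla^\alpha\sigma)$ combined with $Q=-\frac{4}{3}\Real\nabla^\alpha W_\alpha$ produces $2\int Q\sigma$, and the zeroth-order piece contributes a further $\int Q\sigma$, giving $\int P_4'(\sigma)=3\int Q\sigma$. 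Finally, the factorization $P_4=4\nabla^\alpha P_\alpha$ from~\eqref{eqn:subplacian_squared} together with one integration by parts gives $\int\Real((\nabla^\alpha\sigma)(P_\alpha\sigma))=-\frac{1}{4}\int\sigma\,P_4\sigma$. Summing the six contributions produces $3\int Q\sigma+3\int Q\sigma+0+0-\int\sigma P_4\sigma+4\int\sigma P_4\sigma=3\int(\sigma P_4\sigma+2Q\sigma)\,\theta\wedge d\theta$, which is~\eqref{eqn:general_integral_q4prime}.

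\textbf{Main obstacle.} The argument is bookkeeping-heavy but conceptually straightforward. The only substantive input is the three-dimensional CR Bochner identity, which is required to recognize the pure $|\nabla_\alpha\nabla_\beta\sigma|^2$- and $(\nabla_\alpha\nabla^\alpha\sigma)(\nabla^\beta\nabla_\beta\sigma)$-type quadratic-in-$\sigma$ expressions arising from $e^{2\sigma}\cdot 4|\hat A_{\alpha\beta}|^2$ and $e^{2\sigma}\cdot 2\hat\Delta_b\hat R$ as precisely the pieces of $\frac{1}{2}P_4(\sigma^2)-\sigma P_4(\sigma)$ modulo the $P_\alpha\sigma$ correction; careful use of Lee's commutator formulas in Lemma~\ref{lem:lee_commutator} is needed to align the derivative orderings on both sides. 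Once~\eqref{eqn:general_q4prime} is secured, the integral identity~\eqref{eqn:general_integral_q4prime} is a routine chain of integrations by parts.
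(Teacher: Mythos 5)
Your proposal is correct and follows the paper's own route: the pointwise identity \eqref{eqn:general_q4prime} is exactly the assembly of the transformation formulas, the Bochner identity, and the $U(\sigma)$, $V(\sigma)$ decomposition staged immediately before the proposition, and the integral identity \eqref{eqn:general_integral_q4prime} is obtained by integration by parts. Your explicit bookkeeping for the integral step (in particular $\int P_4^\prime(\sigma)=3\int Q\sigma$ and $\int\Real\bigl((\nabla^\alpha\sigma)(P_\alpha\sigma)\bigr)=-\tfrac{1}{4}\int\sigma P_4\sigma$ via $P_4=4\nabla^\alpha P_\alpha$) is accurate and simply fills in details the paper leaves implicit.
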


\begin{proof}

\eqref{eqn:general_q4prime} follows from the computations given above.  \eqref{eqn:general_integral_q4prime} follows by integration by parts.
\end{proof}
%

\begin{proof}[Proof of Theorem~\ref{thm:qprime_upper_bound}]

Let $\hat\theta$ be a CR Yamabe contact form; that is, suppose that $\Vol_{\hat\theta}(M)=1$ and $R_{\hat\theta}=\Lambda[\theta]$ for $\Lambda[\theta]$ the CR Yamabe constant of $(M^3,J,\theta)$.  Then
\begin{equation}
\label{eqn:basic_sphere_step1}
\int_M \hat R^2\,\hat\theta\wedge d\hat\theta = \Lambda[\theta]^2 \leq \Lambda[S^3]^2
\end{equation}
for $\Lambda[S^3]=\Vol(S^3)$ the CR Yamabe constant of the standard CR three-sphere.  Moreover, by the CR Positive Mass Theorem~\cite{ChengMalchiodiYang2013}, equality holds in~\eqref{eqn:basic_sphere_step1} if and only if $(M^3,J,\theta)$ is CR equivalent to the standard CR three-sphere.  On the other hand, the nonnegativity of the CR Paneitz operator combined with~\eqref{eqn:general_integral_q4prime} yield
\begin{equation}
\label{eqn:basic_sphere_step2}
\int_M Q_4^\prime\,\theta\wedge d\theta \leq \int_M \hat Q_4^\prime\,\hat\theta\wedge d\hat\theta,
\end{equation}
while the expression~\eqref{eqn:general_q4prime_defn} yields
\begin{equation}
\label{eqn:basic_sphere_step3}
\int_M \hat Q_4^\prime\,\hat\theta\wedge d\hat\theta \leq \int_M \hat R^2\,\hat\theta\wedge d\hat\theta .
\end{equation}
The result then follows from~\eqref{eqn:basic_sphere_step1}, \eqref{eqn:basic_sphere_step2}, and~\eqref{eqn:basic_sphere_step3}.
\end{proof}

\appendix
\section{CR tractor bundles and the CR Paneitz operator}
\label{sec:tractor}

In this appendix we give the derivation of the CR Paneitz operator in general dimension using tractor bundles in CR geometry, as outlined by Gover and Graham~\cite{GoverGraham2005}.  In the interests of brevity, we compute in a fixed scale $\theta$ and only state the necessary tractor formulae, and refer the reader to~\cite{GoverGraham2005} for definitions of the tractor bundles and operators we use here.

The main objects we are concerned with are the CR tractor bundle $\mE_A\cong\mE(1,0)\oplus\mE_\alpha(1,0)\oplus\mE(0,-1)$, its canonical connection, and the tractor-$D$ operator $\bD\colon\mE^\ast(w,w^\prime)\to\mE_A\otimes\mE^\ast(w-1,w^\prime)$, which are given by
\begin{align*}
\nabla_\beta\begin{pmatrix}\sigma\\\tau_\alpha\\\rho\end{pmatrix} & = \begin{pmatrix}\nabla_\beta\sigma-\tau_\beta\\\nabla_\beta\tau_\alpha+i\sigma A_{\alpha\beta}\\\nabla_\beta\rho - P_\beta{}^\alpha\tau_\alpha + \sigma T_\beta\end{pmatrix} \\
\nabla_{\bar\beta}\begin{pmatrix}\sigma\\\tau_\alpha\\\rho\end{pmatrix} & = \begin{pmatrix}\nabla_{\bar\beta}\sigma\\\nabla_{\bar\beta}\tau_\alpha+\sigma P_{\alpha\bar\beta}+\rho h_{\alpha\bar\beta}\\\nabla_{\bar\beta}\rho+iA^\alpha{}_{\bar\beta}\tau_\alpha-\sigma T_{\bar\beta}\end{pmatrix} \\
\nabla_0\begin{pmatrix}\sigma\\\tau_\alpha\\\rho\end{pmatrix} & = \begin{pmatrix}\nabla_0\sigma+\frac{i}{n+2}P\sigma-i\rho\\\nabla_0\tau_\alpha-iP_\alpha{}^\beta\tau_\beta+\frac{i}{n+2}P\tau_\alpha+2i\sigma T_\alpha \\\nabla_0\rho + \frac{i}{n+2}P\rho + 2iT^\alpha\tau_\alpha+iS\sigma\end{pmatrix} \\
\bD_A f & = \begin{pmatrix} w(n+w+w^\prime)f\\(n+w+w^\prime)\nabla_\alpha f\\-\left(\nabla^\beta\nabla_\beta f + iw\nabla_0 f + w(1+\frac{w^\prime-w}{n+2})Pf\right)\end{pmatrix} ,
\end{align*}
where $\mE^\ast(w,w^\prime)$ denotes any (weighted) tractor bundle.  As always, the topmost nonvanishing slot is CR invariant.  In particular, we see that the bottom row of $\bD_A\bD_Bf$ is the topmost nonvanishing row if $n+w+w^\prime=1$; as is straightforward to check, if we assume that $f\in\mE(w,w^\prime)$ for $n+w+w^\prime=1$, then the ``bottom left'' spot is the only nonvanishing term, and hence is CR invariant.  More precisely, the operator $P_4$ defined by
\begin{equation}
\label{eqn:tractor_paneitz}
-\left(\nabla^\beta\nabla_\beta+i(w-1)\nabla_0+(w-1)(1+\frac{w^\prime-w+1}{n+2})P\right)\bD_A f = \begin{pmatrix} 0\\0\\\frac{1}{4}P_4f\end{pmatrix}
\end{equation}
will necessarily be a CR covariant operator which, as we shall see, has leading order term $\Delta_b^2+T^2$ (this is the reason for the factor of $4$).  To get the usual CR Paneitz operator, we need to assume further that $w=w^\prime$; in particular, $w=-\frac{n-1}{2}$.

In order to evaluate~\eqref{eqn:tractor_paneitz} to determine $P_4$, we need to know the bottom components of both $\nabla_0\bD_A f$ and $\nabla^\beta\nabla_\beta\bD_A f$.  The latter is the most involved computation: Marking irrelevant terms by asterisks, we see that
\begin{align*}
\nabla^\beta\nabla_\beta\begin{pmatrix}\sigma\\\tau_\alpha\\\rho\end{pmatrix} & = h^{\beta\bar\gamma}\nabla_{\bar\gamma}\nabla_\beta\begin{pmatrix}\sigma\\\tau_\alpha\\\rho\end{pmatrix} \\
& = h^{\beta\bar\gamma}\nabla_{\bar\gamma}\begin{pmatrix}\nabla_\beta\sigma-\tau_\beta\\\nabla_\beta\tau_\alpha+i\sigma A_{\alpha\beta}\\\nabla_\beta\rho - P_\beta^\alpha\tau_\alpha + \sigma T_\beta\end{pmatrix} \\
& = \begin{pmatrix}\ast\\\ast\\\nabla^\beta\nabla_\beta\rho - \nabla^\beta(P_\beta^\alpha\tau_\alpha - \sigma T_\beta) + iA^{\alpha\beta}\nabla_\beta\tau_\alpha - \sigma A^{\alpha\beta}A_{\alpha\beta}-(\nabla_\beta\sigma-\tau_\beta) T^\beta\end{pmatrix}
\end{align*}
In particular, we see that the bottom component of $\nabla^\beta\nabla_\beta\bD_Af$ is given by
\begin{equation}
\label{eqn:tractor_laplacian_special}
\begin{split}
& -\nabla^\beta\nabla_\beta(\nabla^\alpha\nabla_\alpha f+iw\nabla_0f+wPf)-\nabla^\beta\left(P_\beta^\alpha\nabla_\alpha f-wfT_\beta\right) \\
& \quad + iA^{\alpha\beta}\nabla_\beta\nabla_\alpha f - wf A^{\alpha\beta}A_{\alpha\beta} - (w-1)T^\beta\nabla_\beta f .
\end{split}
\end{equation}
The other derivative we must compute is $\nabla_0\bD_A f$; it is straightforward to check that the bottom component is given by
\begin{equation}
\label{eqn:tractor_0_special}
\begin{split}
& -\nabla_0\left(\nabla^\beta\nabla_\beta f + iw\nabla_0f + wPf\right) - \frac{i}{n+2}P\left(\nabla^\beta\nabla_\beta f + iw\nabla_0f + wPf\right) \\
& \quad + 2iT^\alpha\nabla_\alpha f+iwSf .
\end{split}
\end{equation}
Evaluating~\eqref{eqn:tractor_paneitz} using~\eqref{eqn:tractor_laplacian_special} and~\eqref{eqn:tractor_0_special}, we thus find that (after identifying tractor terms with their bottom components)
\begin{align*}
\frac{1}{4}P_4^\prime f & = -\nabla^\beta\nabla_\beta\bD_A f - i(w-1)\nabla_0\bD_A f - \frac{(w-1)(n+3)}{n+2}P\bD_A f \\
& = \nabla^\beta\nabla_\beta(\nabla^\alpha\nabla_\alpha f+iw\nabla_0f+wPf) + \nabla^\beta\left(P_\beta^\alpha\nabla_\alpha f-wfT_\beta\right) \\
& \quad - iA^{\alpha\beta}\nabla_\beta\nabla_\alpha f + wf A^{\alpha\beta}A_{\alpha\beta} + (w-1)T^\beta\nabla_\beta f \\
& \quad + i(w-1)\nabla_0\left(\nabla^\beta\nabla_\beta f + iw\nabla_0f + wPf\right) + (w-1)P\left(\nabla^\beta\nabla_\beta f + iw\nabla_0f + wPf\right) \\
& \quad + 2(w-1)T^\alpha\nabla_\alpha f + w(w-1)Sf .
\end{align*}

Our goal is now to simplify this so that we can identify the CR Paneitz operator.  Towards that end, let us regroup terms into those with a $w$ coefficient and those without; in other words, write
\begin{equation}
\label{eqn:lf_split}
P_4^\prime f = Af + wBf
\end{equation}
for
\begin{align*}
\frac{1}{4}Af & = \nabla^\beta\nabla_\beta\nabla^\alpha\nabla_\alpha f - i\nabla_0\nabla^\beta\nabla_\beta f \\
& \quad + \nabla^\beta\left(P_\beta^\alpha\nabla_\alpha f\right) - iA^{\alpha\beta}\nabla_\beta\nabla_\alpha f - 3T^\beta\nabla_\beta f - P\nabla^\beta\nabla_\beta f \\
\frac{1}{4}Bf & = -(w-1)\nabla_0\nabla_0 f + i\nabla_0\nabla^\beta\nabla_\beta f + i\nabla^\beta\nabla_\beta\nabla_0f \\
& \quad + \nabla^\beta\nabla_\beta(Pf) - \nabla^\beta(T_\beta f) + A^{\alpha\beta}A_{\alpha\beta} f + 3T^\beta\nabla_\beta f + i(w-1)\nabla_0(Pf) \\
& \quad + P\nabla^\beta\nabla_\beta f + i(w-1)P\nabla_0f + (w-1)P^2f + (w-1)Sf .
\end{align*}

First, let us rewrite $Af$ in a more familiar way.  Using~\eqref{eqn:schouten_bianchi} and~\eqref{eqn:subplacian_squared}, it is straightforward to check that
\begin{align*}
\frac{1}{4}Af & = \frac{1}{4}Cf + i(n-1)\nabla_0\nabla^\beta\nabla_\beta f + i(n-1)A^{\alpha\beta}\nabla_\beta\nabla_\alpha f + in\left(\nabla_\beta A^{\alpha\beta}\right)\nabla_\alpha f \\
& \quad + (\nabla^\beta P+(n-1)T^\beta)\nabla_\beta f + (P^{\alpha\bar\beta}-Ph^{\alpha\bar\beta})\nabla_{\bar\beta}\nabla_\alpha f - 3T^\beta\nabla_\beta f \\
& = \frac{1}{4}Cf + \tracefree{P}{}^{\alpha\bar\beta}\nabla_{\bar\beta}\nabla_\alpha f \\
& \quad + \frac{n-1}{2}\bigg[2i\nabla_0\nabla^\beta\nabla_\beta f +2i\nabla_\beta(A^{\alpha\beta}\nabla_\alpha f) - \frac{2}{n}P\nabla^\beta\nabla_\beta f + 4T^\beta\nabla_\beta f\bigg] .
\end{align*}
Since $\tracefree{P_{\alpha\bar\beta}}=0$ and $w=0$ when $n=1$, we check in particular that $P_4f=Cf$ in this dimension.

Second, recalling that $w=-\frac{n-1}{2}$, we see from the above that
\begin{align*}
Af & = Cf + 4\tracefree{P}{}^{\alpha\bar\beta}\nabla_{\bar\beta}\nabla_\alpha f + wEf \\
\frac{1}{4}Ef & := -2i\nabla_0\nabla^\beta\nabla_\beta f - 2i\nabla_\beta(A^{\alpha\beta}\nabla_\alpha f) + \frac{2}{n}P\nabla^\beta\nabla_\beta f - 4T^\beta\nabla_\beta f .
\end{align*}
In particular, the operator $F$ defined by $F=B+E$ is such that $P_4f=Cf+4\tracefree{P}{}^{\alpha\bar\beta}\nabla_{\bar\beta}\nabla_\alpha f + wFf$, and is given by
\begin{align*}
\frac{1}{4}Ff & = (1-w)\nabla_0\nabla_0 f + i\nabla^\beta\nabla_\beta\nabla_0 f - i\nabla_0\nabla^\beta\nabla_\beta f - 2i\nabla_\beta(A^{\alpha\beta}\nabla_\alpha f) \\
& \quad + \frac{2(n+1)}{n}P\nabla^\beta\nabla_\beta f + 2i(w-1)P\nabla_0 f + \nabla^\beta P \nabla_\beta f + \nabla_\beta P \nabla^\beta f - T^\beta\nabla_\beta f - T_\beta\nabla^\beta f \\
& \quad + \left(\nabla^\beta\nabla_\beta P - \nabla^\beta T_\beta + i(w-1)\nabla_0 P + A^{\alpha\beta}A_{\alpha\beta} + (w-1)P^2 + (w-1)S\right)f \\
& = (1-w)\nabla_0\nabla_0f + i\nabla^\beta(A_{\alpha\beta}\nabla^\alpha f) - i\nabla_\beta(A^{\alpha\beta}\nabla_\alpha f) \\
& \quad + \frac{2(n+1)}{n}P\nabla^\beta\nabla_\beta f + 2i(w-1)P\nabla_0 f + \nabla^\beta P \nabla_\beta f + \nabla_\beta P \nabla^\beta f - T^\beta\nabla_\beta f - T_\beta\nabla^\beta f \\
& \quad + \left(\nabla^\beta\nabla_\beta P - \nabla^\beta T_\beta + i(w-1)\nabla_0 P + A^{\alpha\beta}A_{\alpha\beta} + (w-1)P^2 + (w-1)S\right)f .
\end{align*}

Writing this entirely in terms of $n$, $P_{\alpha\bar\beta}$, $P$, and $A_{\alpha\beta}$ then yields the desired form.              
\section{Checking Via the Fefferman Metric}
\label{sec:fefferman}

In this appendix, we follow the other perspective of Gover and Graham~\cite{GoverGraham2005} and give the formula for the CR Paneitz operator using the Fefferman metric.  To arrive at the formula given in Definition~\ref{defn:cr_paneitz}, we use Lee's intrinsic formulation~\cite{Lee1986} of the Fefferman metric.

To begin, let $(M^{2n+1},J,\theta)$ be a pseudohermitian manifold and let $(\tilde M^{2n+2},g)$ be the Fefferman bundle, which is an $S^1$-bundle over $M$ with $g$ a particular Lorentzian metric.  The Paneitz operator $L_4$ on a pseudo-Riemannian manifold is defined by
\[ L_4u = \Delta^2 u + 4P^{ij}\nabla_i\nabla_j u - (N-2)P_i^i\Delta u - (N-6)(\nabla^jP_i^i)(\nabla_ju) + \frac{N-4}{2}Qu, \]
where $N=2n+2$ is the dimension of $\tilde M$, $P_{ij}=\frac{1}{N-2}\left(R_{ij} - \frac{1}{2(N-1)}R_k^kg_{ij}\right)$ is the Schouten tensor of $g$, $\Delta=\nabla^i\nabla_i$ is the Laplacian (with nonpositive spectrum), and
\[ Q = -\Delta P_i^i - 2P_{ij}P^{ij} + \frac{N}{2}\left(P_i^i\right)^2 \]
is the (conformal) $Q$-curvature.  The key facts about the Paneitz operator on the Fefferman bundle are that it is conformally invariant and that its restriction to functions which are invariant under the circle action is itself invariant under the circle action.  In particular, these facts together imply that $L_4$ descends to a CR covariant operator on $(M^{2n+1},J,\theta)$.  Explicitly, the operator $P_4$ defined by
\begin{equation}
\label{eqn:fefferman_paneitz}
P_4u = \frac{1}{4}\pi_\ast\left(L_4(\pi^\ast u)\right)
\end{equation}
will necessarily be a CR covariant operator of the form $\Delta_b^2+T^2$ plus lower order terms.  Its explicit form can be computed using the following sequence of lemmas which are a consequence of Lee's intrinsic characterization~\cite{Lee1986} of the Fefferman bundle.  First, we have the following simple expressions for the scalar curvature, the Laplacian, and the inner product of two gradients on both manifolds.

\begin{lem}
Let $(M^{2n+1},J,\theta)$ be a pseudohermitian manifold and let $(\tilde M^{2n+2},g)$ denote the associated Fefferman bundle.  Then, given any $u,v\in C^\infty(M)$, it holds that
\[ \pi_\ast\left(\Delta (\pi^\ast u)\right) = -2\Delta_b u , \quad \pi_\ast J = 2P, \quad \pi_\ast\lp\nabla(\pi^\ast u),\nabla(\pi^\ast v)\rp = 4\Real\left(\nabla^\alpha u\nabla_\alpha v\right) . \]
\end{lem}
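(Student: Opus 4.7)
The plan is to prove the three identities by unpacking Lee's explicit description of the Fefferman bundle. Recall from \cite{Lee1986} that, in an admissible coframe $\{\theta,\theta^\alpha,\theta^{\bar\alpha}\}$ for $(M^{2n+1},J,\theta)$, the Fefferman metric may be written as
\[
g = h_{\alpha\bar\beta}\,\theta^\alpha\odot\theta^{\bar\beta} + 2\,\theta\odot\sigma,
\]
where $\sigma = d\phi + \tfrac{1}{n+2}\left(\omega_\alpha{}^\alpha - \tfrac{1}{2}dh_{\alpha\bar\beta}\cdot h^{\alpha\bar\beta}\right) - \tfrac{P}{n+2}\theta$ (up to Lee's choice of normalization) and $\phi$ is a local fiber coordinate on the $S^1$-bundle $\tilde{M}\to M$. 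In particular, $\sigma$ is a genuine connection one-form for the circle action. The dual frame for $\{\theta,\theta^\alpha,\theta^{\bar\alpha},\sigma\}$ is $\{T,Z_\alpha,Z_{\bar\alpha},\partial_\phi\}$, and a brief computation inverting the block matrix shows that the inverse metric is
\[
g^{-1} = h^{\alpha\bar\beta}\,Z_\alpha\odot Z_{\bar\beta} + 2\,T\odot\partial_\phi,
\]
with no $\partial_\phi\otimes\partial_\phi$ term owing to the off-diagonal structure.

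With this formula in hand, the gradient identity is immediate: since $\pi^\ast u$ is fiber-invariant, $d(\pi^\ast u) = T(u)\theta + \nabla_\alpha u\,\theta^\alpha + \nabla_{\bar\alpha}u\,\theta^{\bar\alpha}$, so $\partial_\phi$ annihilates it and only the horizontal piece contributes. Using the reality of $u,v$ and the Hermiticity of $h_{\alpha\bar\beta}$, one finds $\langle d(\pi^\ast u), d(\pi^\ast v)\rangle_g = 2h^{\alpha\bar\beta}\left((\nabla_\alpha u)(\nabla_{\bar\beta}v) + (\nabla_{\bar\beta}u)(\nabla_\alpha v)\right)$, which is $4\,\Real(\nabla^\alpha u\,\nabla_\alpha v)$ after rewriting in the abstract index notation of Section~\ref{sec:bg}. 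The Laplacian identity then follows by integration by parts against a test function $v\in C_c^\infty(M)$: the Fefferman volume form factors as $(\theta\wedge(d\theta)^n)\wedge d\phi$ up to constants, so the fiber integration introduces only a constant factor, and the integration-by-parts formula for $\Delta_b$ recalled in Section~\ref{sec:bg} gives $\pi_\ast(\Delta(\pi^\ast u)) = -2\Delta_b u$ once the factors are reconciled with the definition of $\Delta_b$ as a trace over both $Z_\alpha$ and $Z_{\bar\alpha}$.

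The main obstacle is the Schouten-trace identity $\pi_\ast J = 2P$, since this requires controlling the Ricci tensor of the Fefferman metric rather than just its metric structure. The cleanest route is to invoke Lee's computation \cite[Theorem 4.2]{Lee1986} of $\Ric(g)$ in the Fefferman bundle. Taking the trace of Lee's formula, the horizontal part contributes the pseudohermitian Ricci trace $R = 2(n+1)P$, while the mixed fiber terms contribute only lower-order corrections that cancel against the dimension-dependent factor $\tfrac{1}{2(N-1)}Rg_{ij}$ entering the Schouten tensor $P_{ij}$. After dividing by $N-2 = 2n$ as prescribed, the trace $J = P_i^i$ reduces to $2P$, as desired. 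The verification is a direct calculation, but the bookkeeping of the dimension factors and the contribution from $\sigma$ is the only non-routine ingredient.
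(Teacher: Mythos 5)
The paper offers no proof of this lemma at all: it is stated as one of three facts that are ``a consequence of Lee's intrinsic characterization'' of the Fefferman metric, with the verification deferred entirely to \cite{Lee1986}. Your sketch supplies essentially the intended verification, and the first two identities are handled correctly: the block structure of $g=h_{\alpha\bar\beta}\,\theta^\alpha\odot\theta^{\bar\beta}+2\,\theta\odot\sigma$ (no $\sigma\otimes\sigma$ term, hence no $\theta$--$\theta$ term in $g^{-1}$) kills the $T$-derivative contributions to $\lp d\pi^\ast u,d\pi^\ast v\rp$, and deducing $\pi_\ast(\Delta\pi^\ast u)=-2\Delta_bu$ by integration by parts against the product volume form $\theta\wedge(d\theta)^n\wedge d\gamma$ is a clean way to avoid computing Christoffel symbols. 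Two caveats. First, minor: the exact coefficients in $\sigma$ and the claim that $\{T,Z_\alpha,Z_{\bar\alpha},\partial_\phi\}$ is literally dual to $\{\theta,\theta^\alpha,\theta^{\bar\alpha},\sigma\}$ are both off by normalizations ($\sigma(\partial_\phi)=\tfrac{1}{n+2}$, and the dual of $\theta$ acquires a vertical correction), but none of this affects the two identities, which only use the block structure and $S^1$-invariance. Second, and more substantively, your account of the trace identity is garbled: since $P_i{}^i=\tfrac{1}{N-2}\bigl(R^g-\tfrac{N}{2(N-1)}R^g\bigr)=\tfrac{R^g}{2(N-1)}$ identically, there is no residual ``division by $N-2$'' and no cancellation of mixed fiber terms against the $\tfrac{1}{2(N-1)}R^g g_{ij}$ factor to arrange. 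The identity $\pi_\ast J=2P$ is exactly equivalent to Lee's formula $R^g=\tfrac{2(2n+1)}{n+1}R$ for the scalar curvature of the Fefferman metric (with $N=2n+2$ and $P=\tfrac{R}{2(n+1)}$), so the honest argument is simply to quote that formula and divide by $2(2n+1)$; the mechanism you describe, taken literally, does not compute anything.
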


Next, we have the relationship between the norms of the Schouten tensor on both manifolds.

\begin{lem}
Let $(M^{2n+1},J,\theta)$ be a pseudohermitian manifold and let $(\tilde M^{2n+2},g)$ denote the associated Fefferman bundle.  It holds that
\begin{align*}
\pi_\ast\left(P_{ij}P^{ij}\right) & = \frac{2(n+1)}{n}P_{\alpha\bar\beta}P^{\alpha\bar\beta} + \frac{2(n-1)}{n}A_{\alpha\beta}A^{\alpha\beta} \\
& \quad + \frac{4}{n(n+2)}\Imaginary\left(\nabla^\alpha\nabla^\beta A_{\alpha\beta}\right) + \frac{4}{n(n+2)}\Real\left(\nabla^\alpha\nabla_\alpha P\right) .
\end{align*}
\end{lem}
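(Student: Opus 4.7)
The plan is to compute $P_{ij}P^{ij}$ directly using Lee's intrinsic characterization of the Fefferman bundle \cite{Lee1986}. Starting from an admissible coframe $\{\theta, \theta^\alpha, \theta^{\bar\alpha}\}$ on $M$, I would lift it to a coframe $\{\sigma, \theta, \theta^\alpha, \theta^{\bar\alpha}\}$ on $\tilde M$, where $\sigma$ is the fiber-direction one-form built from the pseudohermitian connection trace $\omega_\alpha{}^\alpha$ and the Schouten scalar $P$. In this coframe the Fefferman metric and its inverse have explicit expressions, and Lee's computation yields the Ricci components $\tilde R_{ij}$ in closed form in terms of $P_{\alpha\bar\beta}$, $A_{\alpha\beta}$, $T_\alpha = \frac{1}{n+2}(\nabla_\alpha P - i\nabla^\beta A_{\alpha\beta})$, and $S$. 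The $(\alpha,\bar\beta)$-components of $\tilde R_{ij}$ encode $P_{\alpha\bar\beta}$ (plus a $Ph_{\alpha\bar\beta}$ piece from the scalar curvature normalization), the $(\alpha,\beta)$-components encode $A_{\alpha\beta}$, the mixed fiber components encode $T_\alpha$, and the purely vertical component encodes $S$.

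Next, I would assemble the pseudo-Riemannian Schouten tensor $P_{ij} = \frac{1}{2n}\big(\tilde R_{ij} - \frac{\tilde R}{2(2n+1)} g_{ij}\big)$ and compute $P_{ij}P^{ij}$ by contracting block-by-block against the inverse metric. The $(\alpha,\bar\beta)$-block contributes a multiple of $P_{\alpha\bar\beta}P^{\alpha\bar\beta}$; the $(\alpha,\beta)$- and conjugate blocks contribute a multiple of $A_{\alpha\beta}A^{\alpha\beta}$; and the mixed blocks containing $\sigma$ contribute through $T_\alpha$ a multiple of $\nabla^\alpha T_\alpha + \nabla^{\bar\alpha}T_{\bar\alpha}$, which by the definition of $T_\alpha$ expands as a combination of $\Real(\nabla^\alpha\nabla_\alpha P)$ and $\Imaginary(\nabla^\alpha\nabla^\beta A_{\alpha\beta})$. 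The Bianchi identity $\nabla^\alpha P_{\alpha\bar\beta} = \nabla_{\bar\beta}P + (n-1)T_{\bar\beta}$ from Lemma~\ref{lem:lee_bianchi} is then used to absorb residual $T_\alpha$ contractions, and the commutator identities of Lemma~\ref{lem:lee_commutator} are used where $\nabla_\alpha$ and $\nabla_{\bar\beta}$ have to be interchanged.

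The main obstacle I expect is arithmetic bookkeeping: aligning the Schouten normalizations $\tfrac{1}{2n}$ and $\tfrac{1}{2(2n+1)}$ with the $(n+1)$ and $(n+2)$ factors appearing in Lee's Ricci formulas and in the definition of $T_\alpha$, so that the final prefactors $\tfrac{2(n+1)}{n}$, $\tfrac{2(n-1)}{n}$, and $\tfrac{4}{n(n+2)}$ emerge cleanly, with no residual trace corrections on the $P_{\alpha\bar\beta}$ and $A_{\alpha\beta}$ pieces. As a sanity check I would verify the low-dimensional cases $n=1$ (where the $A_{\alpha\beta}$ coefficient vanishes) and $n=2$ (where the Ricci components of the Fefferman metric are spelled out explicitly in \cite{Lee1986}), as well as the previous lemma's formulas for $\pi_\ast\Delta$ and $\pi_\ast J$, before declaring the computation complete.
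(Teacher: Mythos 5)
Your plan is the paper's own: Appendix~\ref{sec:fefferman} states this lemma with no written proof, deferring precisely to the computation you describe from Lee's intrinsic characterization of the Fefferman metric in~\cite{Lee1986}, so carrying out the block-by-block contraction of the Fefferman Schouten tensor against the inverse metric is the right (and intended) route. As a guide for the bookkeeping, note that by the definition of $S$ the claimed right-hand side is identically $2\left(P_{\alpha\bar\beta}P^{\alpha\bar\beta}+A_{\alpha\beta}A^{\alpha\beta}-S\right)$, which is the form the contraction naturally produces: the divergence terms $\Real(\nabla^\alpha\nabla_\alpha P)$ and $\Imaginary(\nabla^\alpha\nabla^\beta A_{\alpha\beta})$ enter packaged inside $S$ (equivalently $\frac{2}{n}(\nabla^\alpha T_\alpha+\nabla^{\bar\alpha}T_{\bar\alpha})$) rather than from an algebraic pairing of the mixed $T_\alpha$-blocks, and no $T_\alpha T^\alpha$ term survives.
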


The last ingredient from~\cite{Lee1986} is the inner product of the Schouten tensor with a Hessian, which follows from the formulae for the Ricci tensor and the connection on both manifolds.

\begin{lem}
Let $(M^{2n+1},J,\theta)$ be a pseudohermitian manifold and let $(\tilde M^{2n+2},g)$ denote the associated Fefferman bundle.  Then, given any $u\in C^\infty(M)$, it holds that
\[ \pi_\ast\left(P^{ij}\nabla_i\nabla_ju\right) = 4\nabla_0^2 u - 16\Imaginary\left(A_{\alpha\beta}\nabla^\alpha\nabla^\beta u\right) + 16\Real\left(P^{\alpha\bar\beta}\nabla_{\bar\beta\alpha}^2u\right) - 48\Real\left(T_\alpha\nabla^\alpha u\right) . \]
\end{lem}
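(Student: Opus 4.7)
The plan is to prove this last lemma by direct computation in Lee's adapted frame on the Fefferman bundle, following the same strategy used to establish the two preceding lemmas. Fix an admissible coframe $\{\theta,\theta^\alpha,\theta^{\bar\alpha}\}$ on $M$ and let $\sigma$ denote Lee's connection one-form on the Fefferman circle bundle, so that $\{\theta,\theta^\alpha,\theta^{\bar\alpha},\sigma\}$ is a coframe on $\tilde M$ in which the Fefferman metric $g$ has the explicit form recorded in \cite{Lee1986}. The dual frame $\{T,Z_\alpha,Z_{\bar\alpha},\partial_\sigma\}$ then lifts pseudohermitian data to $\tilde M$, and Lee's paper also records the Levi-Civita Christoffel symbols and the Ricci tensor of $g$ in this frame.

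First I would compute the Hessian of $\tilde u=\pi^\ast u$ component by component. Since $\tilde u$ is constant along the $S^1$-fibers, $\partial_\sigma \tilde u=0$, so each component $\tilde\nabla_i\tilde\nabla_j \tilde u$ reduces to a pseudohermitian Hessian component $\nabla_\alpha\nabla_\beta u$, $\nabla_{\bar\beta}\nabla_\alpha u$, $\nabla_\alpha\nabla_0 u$, $\nabla_0^2 u$ (for horizontal indices), plus Christoffel corrections of the form $-\tilde\Gamma_{ij}^k\partial_k\tilde u$ where one of $i,j$ can be the fiber index. The horizontal corrections in Lee's formulas involve the torsion $A_{\alpha\beta}$ and the trace $P$, so for instance the $(\alpha,\beta)$ mixed Hessian picks up an $iA_{\alpha\beta}\nabla_0 u$-type piece, while the $(\alpha,\sigma)$ Hessian carries $A_\alpha{}^\beta\nabla_\beta u$ and $T_\alpha u$-type contributions.

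Second, I would expand $P^{ij}$ in the same frame. Lee's formula for the Ricci tensor of $g$ expresses the horizontal components in terms of $P_{\alpha\bar\beta}$ and $A_{\alpha\beta}$, the $(\alpha,0)$ components in terms of $T_\alpha$ and $\nabla^\beta A_{\alpha\beta}$, and the fiber components in terms of $P$ and $S$; the Schouten tensor is then obtained by subtracting the appropriate multiple of $R g_{ij}/[(2n+2)(2n+1)]$. In particular $P^{\alpha\bar\beta}$ carries the trace-free part of the CR Schouten plus a multiple of $h^{\alpha\bar\beta}P$, $P^{\alpha\beta}$ carries $A^{\alpha\beta}$, and $P^{\alpha\sigma}$ carries $T^\alpha$. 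I would then contract, noting that $\pi_\ast$ of a fiber-independent scalar is that scalar, so only the terms surviving differentiation along the fiber need to be tracked carefully. The $4\nabla_0^2 u$ comes from the $(0,0)$-slot of $P^{ij}$, the $16\,\Real(P^{\alpha\bar\beta}\nabla_{\bar\beta}\nabla_\alpha u)$ from the pure horizontal slot, the $-16\,\Imaginary(A_{\alpha\beta}\nabla^\alpha\nabla^\beta u)$ from the off-diagonal $P^{\alpha\beta}$ slot combined with the $(\alpha,\beta)$ Hessian, and the $-48\,\Real(T_\alpha\nabla^\alpha u)$ is assembled from the $P^{\alpha\sigma}$ slot contracting against the Christoffel part of $\tilde\nabla_\alpha\tilde\nabla_\sigma \tilde u$ plus the Christoffel contributions inside the pure horizontal Hessian.

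The main obstacle is bookkeeping: the coefficients $4,16,-16,-48$ are the sum of several contributions that come from distinct places (diagonal Ricci versus scalar-curvature correction in the Schouten tensor, intrinsic Hessian versus Christoffel correction in $\tilde\nabla_i\tilde\nabla_j\tilde u$, and both horizontal-horizontal and horizontal-fiber cross terms). Correctly cancelling the spurious $\nabla^\beta A_{\alpha\beta}\nabla^\alpha u$, $\nabla^\alpha P\nabla_\alpha u$, and pure fiber contributions requires careful use of Lemma~\ref{lem:lee_bianchi} and the commutator identities in Lemma~\ref{lem:lee_commutator}. Once the arithmetic is assembled, the claimed identity follows.
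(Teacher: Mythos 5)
Your proposal follows essentially the same route as the paper, which offers no computation at all for this lemma beyond remarking that it ``follows from the formulae for the Ricci tensor and the connection on both manifolds'' in Lee's intrinsic construction of the Fefferman metric \cite{Lee1986}; your outline (pull back $u$, use $\partial_\sigma\pi^\ast u=0$ to reduce the Fefferman Hessian to pseudohermitian Hessians plus Christoffel corrections, expand $P^{ij}$ via Lee's Ricci formula, contract, and clean up with the Bianchi and commutator identities) is exactly the computation being invoked. The only caveat is that the proposal stops short of the actual bookkeeping that produces the coefficients $4,16,-16,-48$, but the paper does not supply that arithmetic either.
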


Putting these together, we provide another derivation for the formula given in Definition~\ref{defn:cr_paneitz} for the CR Paneitz operator.

\begin{prop}
Let $(M^{2n+1},J,\theta)$ be a pseudohermitian manifold and let $(\tilde M^{2n+2},g)$ denote the associated Fefferman bundle.  Denote by $F$ and $Q$ the operators
\begin{align*}
F(u) & = 4P^{ij}\nabla_i\nabla_ju - (N-2)P_k^k\Delta u - (N-6)(\nabla^jP_i^i)(\nabla_ju) \\
Q & = -\Delta P_i^i - 2P_{ij}P^{ij} + \frac{N}{2}(P_i^i)^2
\end{align*}
on $\tilde M^N$.  Then, given any $u\in C^\infty(M)$, it holds that
\begin{align*}
\frac{1}{4}\pi_\ast\left(\Delta^2 (\pi^\ast u)\right) & = \Delta_b^2 u \\
\frac{1}{4}\pi_\ast\left(F(\pi^\ast u)\right) & = \nabla_0^2 u - 4\Imaginary\nabla^\alpha\left(A_{\alpha\beta}\nabla^\beta u\right) + 4\tracefree{P}{}^{\alpha\bar\beta}\nabla_{\bar\beta\alpha}^2u - \frac{4(n^2-1)}{n}\Real\left(\nabla^\alpha(P\nabla_\alpha u)\right) \\
& \quad - \frac{32(n^2-1)}{n(n+2)}\Real\left((\nabla_\alpha P - \frac{in}{2(n+1)}\nabla^\beta A_{\alpha\beta})\nabla^\alpha u\right) \\
\frac{1}{4}\pi_\ast Q & = \frac{(n+1)^2}{n(n+2)}\Delta_b P - \frac{2}{n(n+2)}\Imaginary\left(\nabla^\alpha\nabla^\beta A_{\alpha\beta}\right) \\
& \quad - \frac{n+1}{n}\lv \tracefree{P}{}^{\alpha\bar\beta}\rv^2 - \frac{n-1}{n}\lv A\rv^2 + \frac{(n-1)(n+1)^2}{n^2} P^2 .
\end{align*}
In particular, Definition~\ref{defn:cr_paneitz} for the CR Paneitz operator agrees with the definition via~\eqref{eqn:fefferman_paneitz}.
\end{prop}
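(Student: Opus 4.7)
The plan is to establish each of the three displayed formulas by substituting the three preceding lemmas and then to verify that their weighted sum matches the local expression for $P_4 u$ given in Definition~\ref{defn:cr_paneitz}. Since $L_4 = \Delta^2 + F + \frac{N-4}{2}Q$ with $N = 2n+2$, adding $\frac{1}{4}\pi_\ast$ of the three identities will reproduce the CR Paneitz operator in general dimension via~\eqref{eqn:fefferman_paneitz}. Throughout, the key observation is that each quantity on the Fefferman bundle that appears in $L_4(\pi^\ast u)$ is $S^1$-invariant, so $\pi_\ast$ is well defined and simply ``reads off'' the CR data.

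The first and third assertions are essentially direct. For $\frac{1}{4}\pi_\ast(\Delta^2\pi^\ast u)$, I would iterate the identity $\Delta(\pi^\ast u) = \pi^\ast(-2\Delta_b u)$ from the first lemma: since $-2\Delta_b u$ is again a function on $M$, a second application gives $\Delta^2(\pi^\ast u) = \pi^\ast(4\Delta_b^2 u)$, as needed. For $\frac{1}{4}\pi_\ast Q$, no commutator manipulations are required: apply the first lemma to rewrite $\Delta P_k^k = \Delta(2P)$ as $-4\Delta_b P$, substitute the second lemma for $\pi_\ast(P_{ij}P^{ij})$, and use $(P_k^k)^2 = 4P^2$. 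Gathering coefficients with $N = 2n+2$ matches the stated curvature-squared expression by elementary algebra.

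The substantive work lies in the middle assertion, $\frac{1}{4}\pi_\ast F(\pi^\ast u)$. I would expand the three summands of $F$ individually: the $4P^{ij}\nabla_i\nabla_j u$ term is delivered directly by the third lemma; the $(N-2)P_k^k \Delta u$ term reduces via $\pi_\ast J = 2P$ together with $\pi_\ast\Delta(\pi^\ast u) = -2\Delta_b u$ to $-4(N-2)P\,\Delta_b u$; and the $(N-6)(\nabla^jP_k^k)(\nabla_ju)$ term becomes a multiple of $\Real(\nabla^\alpha P\,\nabla_\alpha u)$ by the gradient formula of the first lemma applied to the lift $P_k^k = 2\pi^\ast P$. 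The main obstacle is then to reorganize the pointwise sum into the divergence form appearing in the proposition, which features $4\tracefree{P}{}^{\alpha\bar\beta}\nabla_{\bar\beta}\nabla_\alpha u$, the $P$-term $-\frac{4(n^2-1)}{n}\Real\nabla^\alpha(P\nabla_\alpha u)$, and the correction built from $\nabla_\alpha P - \frac{in}{2(n+1)}\nabla^\beta A_{\alpha\beta}$. To achieve this I would decompose $P^{\alpha\bar\beta}$ into its trace $\frac{1}{n}P h^{\alpha\bar\beta}$ and tracefree part $\tracefree{P}{}^{\alpha\bar\beta}$, apply the contracted Bianchi identity~\eqref{eqn:schouten_bianchi} to convert derivatives of $P^{\alpha\bar\beta}$ into derivatives of $P$ plus divergences of $A_{\alpha\beta}$, and invoke the commutator relations of Lemma~\ref{lem:lee_commutator} to exchange the mixed Hessians $\nabla_\alpha\nabla_{\bar\beta}u$ and $\nabla_{\bar\beta}\nabla_\alpha u$, absorbing the $\nabla_0 u$ and $T_\alpha\nabla^\alpha u$ contributions produced by the third lemma into the $T_\alpha$-correction in the stated formula. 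The residual difficulty is coefficient bookkeeping: ensuring that the $N$-dependent constants $(N-2)$, $(N-6)$, $(N-4)/2$ in $L_4$ combine with the factors produced by each pushforward to give precisely the $n$-dependent coefficients of Definition~\ref{defn:cr_paneitz}. Once this reconciliation is carried out, the sum of the three computations is exactly $P_4 u$.
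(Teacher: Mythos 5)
Your proposal is correct and follows essentially the same route as the paper, which proves this proposition simply by ``putting together'' the three preceding pushforward lemmas: substitute $\pi_\ast(\Delta\pi^\ast u)=-2\Delta_b u$, $\pi_\ast P_i^i=2P$, the gradient pairing, and the formulas for $\pi_\ast(P_{ij}P^{ij})$ and $\pi_\ast(P^{ij}\nabla_i\nabla_j u)$ into $L_4=\Delta^2+F+\frac{N-4}{2}Q$ with $N=2n+2$, then reorganize using the trace/tracefree split of $P^{\alpha\bar\beta}$, the Bianchi identity~\eqref{eqn:schouten_bianchi}, and the commutators of Lemma~\ref{lem:lee_commutator} to reach the divergence form of Definition~\ref{defn:cr_paneitz}. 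Your identification of where the only real work lies (the middle term and the coefficient bookkeeping) matches what the paper leaves implicit.
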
            

\bibliographystyle{abbrv}
\bibliography{../bib}
\end{document}